\numberwithin{equation}{section}
 \newtheorem{lemma}{Lemma}[section]
 \newtheorem{theorem}{Theorem}[section]
 \theoremstyle{remark}
 \newtheorem{remark}{Remark}[section]
\begin{document}

\title{\bf Global Existence and Large-time Behavior of Solutions to the Cauchy Problem of One-dimensional Viscous Radiative and Reactive Gas}
\author{{\bf Yongkai Liao}\footnote{Email address: yongkai.liao@whu.edu.cn},\quad {\bf Huijiang Zhao}\thanks{Corresponding author. Email address: hhjjzhao@whu.edu.cn}\\[2mm]
School of Mathematics and Statistics, Wuhan University, Wuhan 430072, China\\
and\\
Computational Science Hubei Key Laboratory, Wuhan University, Wuhan 430072, China}
\date{}

\maketitle

\begin{abstract}
Although there are many results on the global solvability and the precise description of the large time behaviors of solutions to the initial-boundary value problems of the one-dimensional viscous radiative and reactive gas in bounded domain with two typical types of boundary conditions, no result is available up to now for the corresponding problems in unbounded domain.  This paper focuses on the Cauchy problem of such a system with prescribed large initial data and the main purpose is to construct its global smooth non-vacuum solutions around a non-vacuum constant equilibrium state and to study the time-asymptotically nonlinear stability of such an equilibrium state. The key point in the analysis is to deduce the uniform positive lower and upper bounds on the specific volume and the temperature.\vspace{3mm}

\noindent{\bf Key words and phrases:} Global existence; Large-time behavior; Viscous radiative and reactive gas; Cauchy problem; Large initial data.
\end{abstract}


\section{Introduction}
In this paper we consider a system of equations describing a motion of a one-dimensional gaseous medium in the presence of radiation and reacting process. The model consists of equations corresponding to the conservation laws of the mass, the momentum and the energy coupling with a reaction-diffusion equation which, in the Lagrangian coordinates, can be written as (cf. \cite{Liao-Zhao, Umehara-Tani-JDE-2007, Umehara-Tani-PJA-2008}):
\begin{eqnarray}\label{a1}
    v_t-u_x&=&0,\nonumber\\
    u_t+p\left(v,\theta\right)_x&=&\left(\frac{\mu u_x}{v}\right)_x,\\
    e_t+p(v,\theta)u_x&=&\frac{\mu u_{x}^{2}}{v}+\left(\frac{\kappa\left(v,\theta\right)\theta_{x}}{v}\right)_{x}+\lambda\phi z,\nonumber\\
    z_{t}&=&\left(\frac{dz_x}{v^{2}}\right)_{x}-\phi z.\nonumber
\end{eqnarray}
Here $x\in\mathbb{R}$ is the Lagrangian space variable, $t\in\mathbb{R}^+$ the time variable and the primary dependent variables are the specific volume $v=v\left(t,x\right)$, the velocity\,$u=u\left(t,x\right)$, the absolute temperature\, $\theta=\theta\left(t,x\right)$ and the mass fraction of the reactant\, $z=z\left(t,x\right)$. The positive constants $d$ and $\lambda$ are the species diffusion coefficient and the difference in the heat between the reactant and the product, respectively. The reaction rate function $\phi=\phi\left(\theta\right)$ is defined, from the Arrhenius law \cite{Ducomet-Zlotnik-NonliAnal-2005}, by
\begin{equation}\label{a2}
 \phi\left(\theta\right)=K\theta^{\beta}\exp\left(-\frac{A}{\theta}\right),
\end{equation}
where positive constants $K$ and $A$ are the coefficients of the rate of the reactant and the activation energy, respectively, and $\beta$ is a non-negative number.

We treat the radiation as a continuous field and consider both the wave and photonic effect. Assume that the high-temperature radiation is at thermal equilibrium with the fluid. Then the pressure $p$ and the internal energy $e$ consist of a linear term in $\theta$ corresponding to the perfect polytropic contribution and a fourth-order radiative part due to the Stefan-Boltzmann radiative law \cite{Mihalas-Mihalas-1984}:
\begin{equation}\label{a3}
  p\left(v,\theta\right)=\frac{R\theta}{v}+\frac{a\theta^{4}}{3}, \quad e(v,\theta)=C_{v}\theta+av\theta^{4},
\end{equation}
where the positive constants $R$, $C_{v}$, and $a$ are the perfect gas constant, the specific heat and the Stefan-Boltzmann constant, respectively.

As in \cite{Liao-Zhao, Umehara-Tani-JDE-2007, Umehara-Tani-PJA-2008}, we also assume that the bulk viscosity $\mu$ is a positive constant and the thermal conductivity $\kappa=\kappa\left(v,\theta\right)$ takes the form
\begin{equation}\label{a4}
\kappa\left(v,\theta\right)=\kappa_{1}+\kappa_{2}v\theta^{b}
\end{equation}
with $\kappa_{1}$, $\kappa_{2}$ and $b$ being some positive constants.

The main purpose of this paper is to consider the global solvability and large time behavior of solutions to the Cauchy problem of the system $\eqref{a1}$-\eqref{a4} in $(0,\infty)\times\mathbb{R}$ with prescribed initial data
\begin{equation}\label{a5}
 \left(v\left(0,x\right),u\left(0,x\right),\theta\left(0,x\right), z\left(0,x\right)\right)=\left(v_0\left(x\right),u_0\left(x\right),\theta_{0}\left(x\right), z_{0}\left(x\right)\right)
\end{equation}
for $x\in\mathbb{R}$, which are assumed to satisfy the following far-field condition:
\begin{equation}\label{a6}
\lim_{|x|\rightarrow\infty}\left(v_0\left(x\right),u_0\left(x\right),\theta_{0}\left(x\right), z_{0}\left(x\right)\right)=(1,0,1,0).
 \end{equation}

Before stating our main results, let us review some related results briefly in the literature. The mathematical study of radiation hydrodynamics has attracted a lot of interest in recent years (cf. \cite{Donatelli-Trivisa-CMP-2006, Ducomet-MMAS-1999, Ducomet-ARMA-2004, Ducomet-Feireisl-CMP-2006, Ducomet-Zlotnik-ARMA-2005, Ducomet-Zlotnik-NonliAnal-2005, Jiang-ZHeng-JMP-2012, Jiang-ZHeng-ZAMP-2014, Liao-Zhao, Qin-Hu-Wang-Huang-Ma-JMAA-2013, Umehara-Tani-JDE-2007, Umehara-Tani-PJA-2008} and references cited therein) and, to the best of our knowledge, all these results available up to now focus on the initial-boundary value problem of the system \eqref{a1}-\eqref{a4} in bounded domain with the following two types of boundary conditions:
\begin{itemize}
\item For the case when the initial condition \eqref{a5} is assumed to be hold for $x\in(0,1)$ together with Dirichlet boundary condition for the stress $\sigma=-p(v,\theta)+\frac{\mu u_x}{v}$ and homogeneous Neumann boundary conditions for both $\theta$ and $z$
\begin{equation}\label{a7}
\left(\sigma(t,x), \theta_{x}(t,x), z_{x}(t,x)\right)|_{x=0,1}=\left(-p_{e},0,0\right)\quad {\textrm{for}}\quad t>0
\end{equation}
 with $p_{e}$ being a positive constant, Umehara and Tani \cite{Umehara-Tani-JDE-2007} established global existence, uniqueness of a classical solutions under the assumptions $4\leq b\leq 16$ and $0\leq \beta\leq \frac{13}{2}$. Later on, they improved their results in \cite{Umehara-Tani-PJA-2008} to the case of $b\geq 3$ and $0\leq \beta< b+9$. Moreover, Qin \cite{Qin-Hu-Wang-Huang-Ma-JMAA-2013} further strengthened the results to the case $\left(b, \beta\right)\in E$, where $E=E_{1}\bigcup E_{2}$ with
 \begin{eqnarray*}
E_{1}&&=\left\{\left(b,\beta\right)\in\mathbb{R}^{2}:\quad \frac{9}{4}< b< 3,\ 0\leq \beta< 2b+6\right\},\\
E_{2}&&=\left\{\left(b,\beta\right)\in\mathbb{R}^{2}:\quad 3\leq b,\ 0\leq \beta<b+9\right\}.
 \end{eqnarray*}
Jiang and Zheng \cite{Jiang-ZHeng-JMP-2012} studied global solvability and asymptotic behavior for the problem \eqref{a1}-\eqref{a5}, \eqref{a7} for the case $b\geq 2$ and $0 \leq \beta < b+9$;

\item For the case when the initial condition \eqref{a5} is assumed to be hold for $x\in(0,1)$ together with homogeneous Dirichlet boundary condition for $u$ and homogeneous Neumann boundary conditions for both $\theta$ and $z$
\begin{equation}\label{a8}
\left(u(t,x), \theta_{x}(t,x), z_{x}(t,x)\right)|_{x=0,1}=\left(0,0,0\right)\quad {\textrm{for}}\,\,t>0,
 \end{equation}
global solvability result for the case of $b\geq4$ together with the precise description of the large time behavior of the global solutions constructed above for the case of $b\geq 6$ are obtained by Ducomet in \cite{Ducomet-MMAS-1999}. Later on, Jiang and Zheng \cite{Jiang-ZHeng-ZAMP-2014} improved the result for the case of $b\geq 2$ and $0\leq \beta<b+9$.
\end{itemize}
We note, however, all the results mentioned above are concerned with the case when the space variable $x$ is in a bounded domain. Thus a natural question is : {\it Whether does a similar result on the global solvability and the time-asymptotic nonlinear stability of the non-vacuum equilibrium state $(1,0,1,0)$ hold for the Cauchy problem \eqref{a1}-\eqref{a6} with large initial data or not?} The aim of the present work is devoted to such a problem and the main result can be stated as follows
\begin{theorem}\label{Th1.1}
Suppose that
\begin{itemize}
\item  The parameters $b$ and $\beta$ are assumed to satisfy:
\begin{equation}\label{1.9}
 b>\frac{11}{3}, \quad 0\leq\beta< b+9;
 \end{equation}
\item The initial data $ \left(v_{0}(x), u_{0}(x), \theta_{0}(x), z_{0}(x)\right)$ satisfy
\begin{eqnarray}\label{a9}
  \left(v_{0}(x)-1, u_{0}(x ), \theta_{0}(x)-1, z_{0}(x)\right)\in H^{1}\left(\mathbb{R}\right),\nonumber\\
   u_{0t}(x)\in L^{2}\left(\mathbb{R}\right),\quad z_{0}(x)\in L^{1}\left(\mathbb{R}\right),\\
     \inf\limits_{x\in\mathbb{R}}v_{0}\left(x\right)>0, \quad\inf\limits_{x\in\mathbb{R}} \theta_{0}\left(x\right)>0, \quad 0\leq z_{0}\left(x\right)\leq 1,  \quad \forall x\in\mathbb{R}.\nonumber
  \end{eqnarray}
\end{itemize}
Then the system (\ref{a1})-(\ref{a4}) with prescribed initial data \eqref{a5} satisfying the far-field condition \eqref{a6} admits a unique global solution $\left(v(t,x), u(t,x), \theta(t,x), z(t,x)\right)$ which satisfies
\begin{eqnarray}\label{a11}
\underline{V}\leq v(t,x)&\leq&\overline{V},\nonumber\\
\underline{\Theta}\leq\theta(t,x)&\leq&\overline{\Theta},\\
0\leq z(t,x)&\leq& 1\nonumber
\end{eqnarray}
for all $\left(t,x\right)\in [0,\infty)\times\mathbb{R}$ and
\begin{equation}\label{a12}
\sup\limits_{0\leq t<\infty}\big\|\big(v-1, u, \theta-1, z\big)(t)\big\|_{H^{1}(\mathbb{R})}^{2}+\int_{0}^{\infty}\left(\left\|v_{x}(s)\right\|^{2}_{L^2(\mathbb{R})}+\big\|\big(u_{x}, \theta_{x}, z_{x}\big)(s)\big\|^{2}_{H^{1}\left(\mathbb{R}\right)}\right)ds\leq C.
\end{equation}
Here $\underline{V},$ $\overline{V},$ $\underline{\Theta},$ $\overline{\Theta}$ and $C$ are some positive constants which depend only on the initial data $(v_{0}(x), u_{0}(x), $
$\theta_{0}(x), z_{0}(x))$.

Moreover, the large time behavior of the global solution $\left(v(t,x), u(t,x), \theta(t,x), z(t,x)\right)$ constructed above can be described by the non-vacuum equilibrium state $(1,0,1,0)$ in the sense that
\begin{equation}\label{a13}
\lim_{t\rightarrow+\infty}\left(\left\|\left(v-1, u, \theta-1, z\right)\left(t\right)\right\|_{L^{p}\left(\mathbb{R}\right)}
+\left\|\left(v_{x}, u_{x}, \theta_{x}, z_{x}\right)\left(t\right)\right\|_{L^{2}\left(\mathbb{R}\right)}\right)=0
\end{equation}
holds for any $p\in (2,\infty]$.
\end{theorem}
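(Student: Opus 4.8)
The proof follows the by-now standard program for one-dimensional compressible models of this type: local well-posedness, uniform-in-time \emph{a priori} estimates, and a continuation argument, with the large-time behavior \eqref{a13} then read off from the a priori bounds. First I would construct a unique local-in-time smooth solution on some interval $[0,T_0]$ by a contraction/iteration scheme in the function class dictated by \eqref{a9}, i.e. with $(v-1,u,\theta-1,z)\in C([0,T_0];H^1(\mathbb{R}))$ together with the parabolic smoothing of $u,\theta,z$; the strict positivity of $v$ and $\theta$ survives on a short interval by continuity, and $0\le z\le 1$ is propagated by the maximum principle applied to the reaction--diffusion equation for $z$. This step is routine and simultaneously yields the uniqueness claim.

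The heart of the matter is to prove that, on any interval $[0,T]$ on which a smooth non-vacuum solution with $0\le z\le 1$ exists, the bounds \eqref{a11}--\eqref{a12} hold with constants independent of $T$. I would establish these in the following order. \textbf{(1) Basic energy/entropy estimate.} Integrating the $z$-equation gives $\tfrac{d}{dt}\int_{\mathbb R}z\,dx=-\int_{\mathbb R}\phi z\,dx\le0$, hence $\|z(t)\|_{L^1(\mathbb{R})}\le\|z_0\|_{L^1(\mathbb{R})}$ and $\int_0^\infty\!\int_{\mathbb R}\phi z\,dx\,dt\le\|z_0\|_{L^1(\mathbb{R})}$; combining the identity for the conserved total energy $\tfrac{u^2}{2}+e+\lambda z$ with the entropy identity obtained from the internal-energy equation divided by $\theta$ --- i.e. working with the relative entropy of the solution about the equilibrium $(1,0,1,0)$ --- then yields a uniform-in-$(t,T)$ bound controlling $\|u(t)\|_{L^2(\mathbb{R})}^2$ and $\int_{\mathbb R}\big(\mathcal{E}(v)+\mathcal{E}(\theta)\big)(t)\,dx$ with $\mathcal{E}(s)=s-1-\log s$, together with the dissipation bound $\int_0^T\!\int_{\mathbb R}\big(\tfrac{\mu u_x^2}{v\theta}+\tfrac{\kappa\theta_x^2}{v\theta^2}+\tfrac{dz_x^2}{v^2}+\tfrac{\phi z}{\theta}\big)\,dx\,dt\le C$. \textbf{(2) Pointwise bounds for $v$.} Writing $\tfrac{\mu u_x}{v}=\mu(\log v)_t$ and integrating the momentum equation once in $x$ (against an antiderivative of $u$) and once in $t$ produces a Kazhikhov--Kanel'-type representation of $v(t,x)$; combining it with the estimate from (1) and a Gronwall argument gives $0<\underline V\le v(t,x)\le\overline V$. \textbf{(3) Bounds for $\theta$.} The lower bound $\theta\ge\underline\Theta$ follows from a maximum-principle argument for an equation satisfied by a negative power of $\theta$, using the bounds already obtained; the upper bound $\theta\le\overline\Theta$ is obtained by first bootstrapping the space-time integrability of $\theta$ and of $\theta_x$ out of the conductivity structure $\kappa\sim\kappa_2 v\theta^b$, then bounding $\int_0^T\|\theta(t)\|_{L^\infty(\mathbb{R})}^q\,dt$ for a suitable exponent $q$, and finally closing a differential inequality for $\max_x\theta$ by testing the internal-energy equation with a high power of $\theta$, the reaction term $\lambda\phi z$ being absorbed thanks to $0\le\beta<b+9$. \textbf{(4) Higher-order estimates.} With the pointwise bounds in hand, standard energy estimates for $v_x$ (through the effective viscous flux $\tfrac{\mu u_x}{v}-(p(v,\theta)-p(1,1))$), for $u_x$ and $u_t$, and for $\theta_x$ and $z_x$ close the bound \eqref{a12}, and $\|(v-1,u,\theta-1,z)(t)\|_{L^2(\mathbb{R})}^2\le C$ now follows since $\mathcal{E}(v)\sim(v-1)^2$ and $\mathcal{E}(\theta)\sim(\theta-1)^2$ on the range fixed by the pointwise bounds.

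The main obstacle is step (3), and inside it the uniform upper bound for $\theta$ (which is also what ultimately pins down the upper bound for $v$): in the unbounded domain there is no Poincar\'e inequality, so the dissipation integrals from (1) cannot be turned directly into pointwise control. One is forced to exploit the precise algebraic structure of $\kappa(v,\theta)$ and $p(v,\theta)$ and to track the powers of $\theta$ carefully through a chain of Gagliardo--Nirenberg and Gronwall inequalities, and it is exactly here that the restrictions $b>\tfrac{11}{3}$ and $0\le\beta<b+9$ are used. Once the a priori estimates are in place, the continuation argument combined with the local existence yields the global solution satisfying \eqref{a11}--\eqref{a12}.

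Finally, for the large-time behavior: \eqref{a12} gives $\int_0^\infty\big(\|v_x(s)\|^2+\|(u_x,\theta_x,z_x)(s)\|_{H^1(\mathbb{R})}^2\big)\,ds<\infty$, while differentiating $\|v_x(t)\|^2,\ \|u_x(t)\|^2,\ \|\theta_x(t)\|^2,\ \|z_x(t)\|^2$ in $t$, substituting the equations of \eqref{a1}, and invoking the uniform pointwise and $H^1$ bounds shows that each of these quantities has a time derivative in $L^1(0,\infty)$; hence $\|(v_x,u_x,\theta_x,z_x)(t)\|_{L^2(\mathbb{R})}\to0$ as $t\to\infty$. Since $\|(v-1,u,\theta-1,z)(t)\|_{L^2(\mathbb{R})}$ stays bounded by \eqref{a12}, the interpolation inequality $\|f\|_{L^\infty(\mathbb{R})}^2\le 2\|f\|_{L^2(\mathbb{R})}\|f_x\|_{L^2(\mathbb{R})}$ gives $\|(v-1,u,\theta-1,z)(t)\|_{L^\infty(\mathbb{R})}\to0$, and then $\|f\|_{L^p(\mathbb{R})}\le\|f\|_{L^\infty(\mathbb{R})}^{1-2/p}\|f\|_{L^2(\mathbb{R})}^{2/p}$ yields \eqref{a13} for every $p\in(2,\infty]$, which completes the proof.
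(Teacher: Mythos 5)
There is a genuine gap in your treatment of the \emph{uniform positive lower bound} for $\theta$. You assert in step (3) that $\theta\ge\underline\Theta$ ``follows from a maximum-principle argument for an equation satisfied by a negative power of $\theta$, using the bounds already obtained.'' In fact, applying the parabolic maximum principle to the equation for $h=1/\theta$ (which is exactly what the paper does in Lemma~2.14) yields only the \emph{local-in-time} bound
\begin{equation*}
\theta(t,x)\ \ge\ \frac{C\,\min_{x\in\mathbb{R}}\theta(s,x)}{1+(t-s)\,\min_{x\in\mathbb{R}}\theta(s,x)},\qquad 0\le s\le t\le T,
\end{equation*}
whose right-hand side decays like $(t-s)^{-1}$ as $t-s\to\infty$. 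The right-hand side of the $h$-equation contains the forcing term $vp_\theta^2/(4\mu)$, which is bounded below away from zero, so the maximum principle can never directly produce a lower bound for $\theta$ that is uniform over $[0,\infty)$. This is precisely the reason the paper devotes all of Section~3 to a carefully designed continuation argument (adapted from Wang--Zhao): one uses the integrability $\int_0^\infty\|\theta_x(t)\|_{L^2}^2\,dt\le C_1$ to locate, inside each time-window of fixed length $T_1=24C_1^2$, a time $\tau$ at which $\|\theta_x(\tau)\|_{L^2}\le\frac{1}{4\sqrt{C_1}}$, hence $\frac12\le\theta(\tau,\cdot)\le\frac32$ by Sobolev interpolation; restarting the maximum-principle estimate from each such $\tau$ then produces a lower bound that does \emph{not} degrade from window to window, and patching these windows together gives the uniform $\underline\Theta$ in \eqref{a11}. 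Your proposal skips this entirely, and as written the uniform lower bound (and hence the uniform equivalence $\mathcal{E}(\theta)\sim(\theta-1)^2$ that you invoke to pass from the entropy bound to the $L^2$ bound in \eqref{a12}, and consequently the large-time argument) is unsupported.

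The remainder of your outline is in the right spirit but somewhat differs in technique from the paper. For the upper bound of $\theta$ you propose ``testing the internal-energy equation with a high power of $\theta$,'' whereas the paper runs a bootstrap among the four functionals $X,Y,Z,W$ (space-time weighted norms of $\theta_t$, $\theta_x$, $u_{xx}$, $u_{xt}$) and closes via Young's inequality under $b>\tfrac{11}{3}$; both styles can work, but the paper's is the one that makes the admissible range of $(b,\beta)$ transparent. For the bound on $v$ you describe integrating the momentum equation ``against an antiderivative of $u$,'' which is the bounded-domain device; on $\mathbb{R}$ the paper instead localizes via the cut-off function $\phi$ in \eqref{b23} to obtain the representation \eqref{b24} and then uses the decay of $Y(t)$ in \eqref{b33}. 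These are differences of execution rather than correctness, but the missing continuation argument for the $\theta$-lower bound is a real gap that you would need to fill.
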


\begin{remark} Some remarks concerning Theorem \ref{Th1.1} are listed below:
\begin{itemize}
\item Our main result shows that both the specific volume and the temperature are bounded from below and above uniformly in $t$ and $x$. Thus the result for the problem (\ref{a1})-(\ref{a6}) in Theorem \ref{Th1.1} in Lagrangian coordinates can easily be converted to the equivalent statement for the corresponding result for the problem in Eulerian coordinates (cf. \cite{Liao-Zhao, Umehara-Tani-JDE-2007});
\item Note that even for the viscous heat-conducting ideal polytropic gas with constant transport coefficients, although the global solvability result to its Cauchy problem has been established in \cite{Antontsev-Kazhikov-Monakhov-1990, Kazhikhov-Shelukhin-JAMM-1977} for nearly forty years, the problem on its large time behavior is solved only recently in \cite{Li-Liang-ARMA-2016}. Our main result Theorem \ref{Th1.1} shows that similar result holds for certain viscous heat-conducting general gas with constant viscosity and the variable heat-conducting coefficient $\kappa(v,\theta)$ verifying \eqref{a4} provided that the constitutive relations \eqref{a3} are satisfied;
\item The estimate \eqref{a13} obtained in Theorem \ref{Th1.1} tells us that the non-vacuum equilibrium state $(1,0,1,0)$ is time-asymptotically nonlinear stable. For the case when the far fields $(v_\pm, u_\pm, \theta_\pm)$ of the initial data $(v_0(x), u_0(x), \theta_0(x))$ given by \eqref{a5} are different, i.e. $\lim\limits_{x\to\pm\infty} (v_0(x), u_0(x), \theta_0(x))=(v_\pm, u_\pm,\theta_\pm)$ with $(v_-, u_-,\theta_-)\not=(v_+,u_+,\theta_+)$, the precise description of the large time behavior of its global solutions is given by some elementary waves, i.e. rarefaction waves, viscous shock waves, viscous contact waves and/or their suitable superpositions, which are uniquely determined by the structure of the unique global entropy solution of the following Riemann problem of the reduced compressible Euler equations
\begin{eqnarray*}
v_t-u_x&=&0,\quad (t,x)\in \mathbb{R}^+\times\mathbb{R},\\
u_t+p(v,\theta)_x&=&0,\quad (t,x)\in \mathbb{R}^+\times\mathbb{R},\\
\left(e+\frac {u^2}{2}\right)_t+(up(v,\theta))_x&=&0,\quad (t,x)\in \mathbb{R}^+\times\mathbb{R},\\
(v(0,x),u(0,x),\theta(0,x))&=&\left\{
\begin{array}{rl}
(v_-,u_-,\theta_-),& x<0,\\
(v_+,u_+,\theta_+),& x>0.
\end{array}
\right.
\end{eqnarray*}
For such a case, the problem on the large behavior of the global solutions to the Cauchy problem \eqref{a1}-\eqref{a4}, \eqref{a5} with different far-fields can be reduced to the study of the nonlinear stability of above mentioned elementary wave patterns and we're convinced that the argument used in this paper can be adapted to deal with the nonlinear stability of certain wave patterns to the Cauchy problem of \eqref{a1}-\eqref{a4}, \eqref{a5} with large initial data.
\end{itemize}
\end{remark}

Now we outline the main difficulties of the problem and our strategy to deduce our main result obtained in Theorem \ref{Th1.1}. As is usual for the wellposedness theories of nonlinear partial differential equations, the main difficulty in deducing the global solvability results to our problem is to control the possible growth of their solutions induced by the nonlinearities of the equations \eqref{a1} suitably and the key point is to obtain the positive lower and upper bounds on the specific volume $v\left(t,x\right)$ and the temperature $\theta\left(t,x\right)$.

To illustrate our idea, we first recall the arguments used in \cite{Jiang-ZHeng-ZAMP-2014} to deal with the initial-boundary value problem (\ref{a1})-\eqref{a4}, (\ref{a5}), (\ref{a8}). In order to deduce the desired lower and upper bounds of $v(t,x)$, the authors first obtained the following representation formula for the specific volume $v(t,x)$:
\begin{equation}\label{a14}
v\left(t,x\right)=\frac{\widetilde{D}\left(t,x\right)}{\widetilde{B}\left(t\right)}\bigg[1+\frac{1}{\mu}\int_{0}^{t}
\frac{v\left(s,x\right)p\left(s,x\right)\widetilde{B}\left(s\right)}{\widetilde{D}\left(s,x\right)}ds\bigg],
\end{equation}
where
\begin{eqnarray*}
\widetilde{D}\left(t,x\right)&=&v_{0}\left(x\right)\exp\left[\frac{1}{\mu}\left(\int_{0}^{1}v_{0}(x) \left(\int_{0}^{x}u_{0}(y)dy\right)dx+\int_{x_{0}(t)}^{x}u\left(t, y\right)dy-\int_{0}^{x}u_{0}\left(y\right)dy\right)\right],\nonumber\\
\widetilde{B}\left(t\right)&=&\exp\left[\frac{1}{\mu}\int_{0}^{t}\int_{0}^{1}\left(u^{2}+p(v,\theta)v\right)\left(s,x\right)dxds\right].
\end{eqnarray*}
Here $x_{0}(t)\in [0,1]$. Such a method is motivated by an argument developed by Kazhikhov and Shelukhin to study the wellposedness problem of a one-dimensional viscous and heat conducting ideal polytropic gas motion (cf. \cite{Antontsev-Kazhikov-Monakhov-1990, Kazhikhov-Shelukhin-JAMM-1977}). Based on the above explicit representation formula for $v(t,x)$ and the basic energy type estimates, they can derive the desired positive lower and upper bounds on $v(t,x)$ first.

But for the Cauchy problem (\ref{a1})-\eqref{a4}, \eqref{a5}, (\ref{a6}) considered in this paper, since the space variable $x$ is in unbounded domain ($x\in \mathbb{R}$), the above method loses its effect. Furthermore, due to the unboundedness of the domain, we can't obtain a nice bound on $\|v_{x}(t)\|^{2}$ directly as in \cite{Jiang-ZHeng-ZAMP-2014}, which is essential in deriving the upper bond on $\theta\left(t,x\right)$ in \cite{Jiang-ZHeng-ZAMP-2014}. To overcome such difficulties, motivated by the works \cite{Jiang-AMPA-1998, Jiang-CMP-1999, Jiang-PRSE-2002} of Jiang on the viscous heat-conducting ideal polytropic gas, we will derive a new representation formula of $v(t,x)$ by using a special cut-off function to derive the desired bounds on $v(t,x)$. The key point in our analysis can be outlined as in the following:

\begin{itemize}
\item [(i).] We first construct a normalized entropy $\widetilde{S}$ (see (\ref{b14})) to \eqref{a1} to derive the basic energy estimates for our problem, which is essential to deduce the positive lower and upper bounds of the specific volume and temperature. It is worth pointing out that the method to deduce the basic energy estimates here is different from that used in \cite{Umehara-Tani-JDE-2007} due to the unboundedness of the domain;
\item [(ii).] Motivated by \cite{Jiang-CMP-1999}, we use a special cut-off function $\phi\left(x\right)$ (see \eqref{b23}) to derive a new representation of $v\left(t,x\right)$, that is, \eqref{b24}. Based on such a useful formula, we can deduce the desired uniform upper bound of $v\left(t,x\right)$. For the uniform positive lower bound of $v(t,x)$, we can first deduce the uniform lower bound of $v\left(t,x\right)$ for $t\geq t_0$ for some suitably chosen large positive constant $t>0$, then by adopting the method used in \cite{Kawohl-JDE-1985, Kazhikhov-Shelukhin-JAMM-1977} further to yield a lower bound of $v\left(t,x\right)$ for $0< t<t_{0}$, and from these two types of estimates, the desired uniform positive lower bound of $v(t,x)$ follows immedaitely. It is worth pointing out that all the bounds obtained above are independent of the time variable $t$, which is crucial in studying the large-time behavior of our problem;
\item [(iii).] Having obtained the lower and upper bound of $v\left(t,x\right)$, we turn to estimate the term $\|v_{x}(t)\|^{2}$ in terms of $\|\theta\|_{L^\infty([0,T]\times\mathbb{R})}$ in Lemma 2.7, which will be frequently used in deriving the upper bound of $\theta\left(t,x\right)$;
\item [(iv).] Motivated by \cite{Jiang-ZHeng-ZAMP-2014}, we introduce an additional quantity $W(t)$ (see \eqref{b56}) to deduce the upper bound of $\theta\left(t,x\right)$. The estimates obtained here are more delicate than those carried out in \cite{Ducomet-MMAS-1999}. Noticing that the estimates \eqref{b57} and \eqref{bz58} have played an important role in our discussion;
\item [(v).] Finally, by employing the maximum principle, one can derive a local estimate on the lower bound of $\theta\left(t,x\right)$ (see \eqref{b180}). Although such a bound depends on time $t$, it is sufficient to extend the local solution to a global one by combining the above estimates with the continuation argument designed in \cite{Wang-Zhao-M3AS-2016} for the viscous heat-conducting ideal polytropic gas with temperature and density dependent transport coefficients.
\end{itemize}

Note that since the energy producing process inside the medium is taken into account in the equations \eqref{a1}, that is, the gas consists of a reacting mixture and the combustion process is current at the high temperature stage, and the experimental results for gases at high temperatures in \cite{Zeldovich-Raizer-1967} show that the viscosity coefficient $\mu$ may depend on the specific volume and/or temperature. Thus it would be interesting and necessary to consider the corresponding global wellposedness theory for the case when the viscosity coefficient $\mu$ is a function of $v$ and $\theta$.

For such a problem, if the viscosity coefficient $\mu$ is a smooth function of the specific volume $v$ for $v>0$ which can be degenerate, some global solvability results are established in \cite{Liao-Zhao} for the above mentioned two types of initial-boundary value problems of the system \eqref{a1}-\eqref{a4}. As for the case when the viscosity coefficient $\mu$ depends also on the temperature, note that even for one-dimensional compressible Navier-Stokes equations for a viscous and heat conducting ideal polytropic gas, as pointed out in \cite{Jenssen-Karper-SIMA-2010}, temperature dependence of the viscosity $\mu$ has turned out to be especially problematic (for some recent progress in this problem for viscous heat-conducting ideal polytropic gas, see \cite{Huang-Wang-Xiao-KRM-2016, Liu-Yang-Zhao-Zou-SIMA-2014, Wan-Wang-JDE-2017, Wang-Zhao-M3AS-2016}), to the best of our knowledge, no result is available up to now for the system (\ref{a1})-(\ref{a4}) modeling one-dimensional viscous radiative and reactive gas. We're convinced that the argument used in this paper can be used to treat the case when the viscosity coefficient $\mu$ is a smooth function of $v$ and $\theta$ and such a problem is under our current research, cf. \cite{He-Liao-Wang-Zhao-2017}.

Before concluding this section, it is worth pointing out that there are many results on the construction of global smooth non-vacuum solutions to the initial problem and initial-boundary value problems with various boundary conditions for the one-dimensional compressible Navier-Stokes type equations, the interested readers are referred to \cite{Chen-SIMA-1992, Chen-Zhao-Zou-PRSE-2017, Dafermos-Hsiao-NonliAnal-1982, Donatelli-Trivisa-CMP-2006,  Ducomet-M3AS-1996, Ducomet-MMNA-1997, Ducomet-BanachCenterPubl-2000, Ducomet-ARMA-2004, Ducomet-Zlotnik-CRASP-2004, Ducomet-Zlotnik-ARMA-2005, Jenssen-Karper-SIMA-2010, Jiang-ZHeng-JMP-2012, Jiang-ZHeng-ZAMP-2014, Jiang-AMPA-1998, Jiang-CMP-1999, Jiang-PRSE-2002, Kawohl-JDE-1985, Kazhikhov-Shelukhin-JAMM-1977, Li-Liang-ARMA-2016, Liu-Yang-Zhao-Zou-SIMA-2014, Pan-Zhang-CMS-2015, Qin-Hu-JMP-2011, Qin-Hu-Wang-Huang-Ma-JMAA-2013, Qin-Zhang-Su-Cao-JMFM-2016, Shandarin-Zeldovichi-RMP-1989, Tan-Yang-Zhao-Zou-SIMA-2013, Umehara-Tani-JDE-2007, Umehara-Tani-PJA-2008, Wan-Wang-Zhao-JDE-2016, Wan-Wang-Zou-Nonlinearity-2016, Wang-Zhao-M3AS-2016, Wylen-Sonntag-1985, Zeldovich-Raizer-1967, Zhang-Xie-JDE-2008} and the references cited therein.

The rest of the paper is organized as follows: we first derive some useful a priori estimates in Section 2, then the proof of Theorem \ref{Th1.1} will be given in Sections 3.
\bigbreak
\noindent{\bf Notations:}\quad Throughout this paper, $C\geq 1$ is used to denote a generic positive constant which may dependent only on $\inf\limits_{x\in\mathbb{R}}v_{0}\left(x\right)$, $\inf\limits_{x\in\mathbb{R}}\theta_{0}\left(x\right)$, $\|\left(v_{0}-1, u_{0}, \theta_{0}-1, z_{0}\right)\|_{H^{1}{(\mathbb{R}})}$, $\|u_{0t}\|_{L^{2}{(\mathbb{R}})}$ and $\|z_{0}\|_{L^{1}{(\mathbb{R}})}$. Note that these constants may vary from line to line. $C\left(\cdot,\cdot\right)$ stand for some generic positive constant depending only on the quantities listed in the parenthesis.  $\epsilon$ stand for some small positive constant. For function spaces, ~$L^q\left(\mathbb{R}\right)\left(1\leq q\leq \infty\right)$~denotes the usual Lebesgue space on~$\mathbb{R}$~with norm~$\|{\cdot}\|_{L^q\left(\mathbb{R}\right)},$ while for $\ell\in\mathbb{N}$, $H^\ell\left(\mathbb{R}\right)$~denotes the usual $\ell-$th order Sobolev space with norm~$\|{\cdot}\|_\ell\equiv\|{\cdot}\|_{H^\ell\left(\mathbb{R}\right)}$. For simplicity, we use $\|{\cdot}\|_{\infty}$ to denote the norm in $L^{\infty}\left([0,T]\times\mathbb{R}\right)$ for some $T>0$ and use $\|{\cdot}\|$ to denote the norm ~$\|{\cdot}\|_{L^2\left(\mathbb{R}\right)}$.

 \section{Some a priori estimates}
The main purpose of this section is to deduce certain a priori estimates on the solutions of the Cauchy problem \eqref{a1}-\eqref{a4}, \eqref{a5}, \eqref{a6} in terms of the initial data $(v_0(x), u_0(x), \theta_0(x), z_0(x))$. To this end, for some constants $0<T\leq +\infty$, $0<M_1<M_2,$ $0<N_1<N_2$, we first define the set of functions $X(0,T;M_1,M_2;N_1,N_2)$ for which we seek the solution of the Cauchy problem \eqref{a1}-\eqref{a4}, \eqref{a5}, \eqref{a6} as follows:
\begin{eqnarray*}
   &&X(0, T;M_1,M_2;N_1,N_2)\\
&:=&\left\{(v(t,x), u(t,x),\theta(t,x),z(t,x))\ \left|
   \begin{array}{c}
   0\leq z(t,x)\in  C\left(0,T;H^{1}\left(\mathbb{R}\right)\cap L^1(\mathbb{R})\right),\\
   \left(v\left(t,x\right)-1,u\left(t,x\right),\theta\left(t,x\right)-1\right)\in C\left(0,T;H^{1}\left(\mathbb{R}\right)\right),\\
   \left(u_{x}\left(t,x\right), \theta_{x}\left(t,x\right), z_{x}\left(t,x\right)\right)\in L^{2}\left(0,T;H^{1}\left(\mathbb{R}\right)\right),\\
   v_{x}\left(t,x\right)\in L^{2}\left(0,T; L^{2}\left(\mathbb{R}\right)\right),\\
   M_1\leq v(t,x)\leq M_2,\ \forall (t,x)\in[0,T]\times\mathbb{R},\\
   N_1\leq \theta(t,x)\leq N_2,\ \forall (t,x)\in[0,T]\times\mathbb{R}
   \end{array}
   \right.
   \right\}.
  \end{eqnarray*}

The standard local wellposedness result on the Cauchy problem of the hyperbolic-parabolic coupled system tells us that there exists a sufficiently small positive constant $t_1>0$, which depends only on $m_0=\inf\limits_{x\in\mathbb{R}} v_0(x), n_0=\inf\limits_{x\in\mathbb{R}} \theta_0(x)$ and $\ell_0=\|(v_0-1,u_0,\theta_0-1,z_0)\|_1$ such that the Cauchy problem \eqref{a1}-\eqref{a4}, \eqref{a5}, \eqref{a6} admits a unique solution $(v(t,x), u(t,x),\theta(t,x),z(t,x))\in X(0,t_1;m_0/2,2+2\ell_0;n_0/2,2+2\ell_0)$. Now suppose that such a solution has been extended to the time step $t=T\geq t_1$ and $(v(t,x), u(t,x),\theta(t,x),z(t,x))\in X(0,T;M_1,M_2;N_1,N_2)$ for some positive constants $M_2>M_1>0, N_2>N_1>0$, we now try to deduce certain a priori energy type estimates on $(v(t,x), u(t,x),\theta(t,x),z(t,x))$ in terms of the initial data $(v_0(x), u_0(x), \theta_0(x), z_0(x))$.

Our first result is concerned with the basic energy estimates, which will play an essential role in deducing the desired positive lower and upper bounds of $v\left(t,x\right)$. To do so, if we use $S(v,\theta)$ to denote the entropy, then the second law of thermodynamics asserts that
\begin{eqnarray}\label{b4}
   \frac{\partial S\left(v,\theta\right)}{\partial v}&=&\frac{\partial p\left(v,\theta\right)}{\partial\theta},\nonumber\\
    \frac{\partial S\left(v,\theta\right)}{\partial\theta}&=&\frac{1}{\theta}\frac{\partial e\left(v,\theta\right)}{\partial\theta},\\
    \frac{\partial e\left(v,\theta\right)}{\partial v}&=&\theta \frac{\partial p(v,\theta)}{\partial\theta}-p\left(v,\theta\right).\nonumber
\end{eqnarray}
From which and the constitutive relations \eqref{a3}, one easily deduce that
\begin{equation*}\label{b11}
S(v,\theta)=C_{v}\ln\theta+\frac{4}{3}av\theta^{3}+R\ln v
 \end{equation*}
and the normalized entropy $\widetilde{S}(v,\theta)$ around $(v,\theta)=(1,1)$ is given by
\begin{eqnarray}\label{b14}
\widetilde{S}(v,\theta)&&=C_{v}\theta+av\theta^{4}-\left(C_{v}+a\right)+\left(R+\frac{a}{3}\right)\left(v-1\right)-\bigg(S-\frac{4}{3}a\bigg)\nonumber\\
&&=C_{v}\left(\theta-\ln\theta-1\right)+R\left(v-\ln v-1\right)+\frac{1}{3}av\left(\theta-1\right)^{2}\left(3\theta^{2}+2\theta+1\right).
  \end{eqnarray}
Moreover, one can deduce from \eqref{a1}, \eqref{a3} and \eqref{b14} that
\begin{eqnarray}\label{b15}
  &&\left(\widetilde{S}+\frac{u^{2}}{2}\right)_{t}+\frac{\mu u_{x}^{2}}{v\theta}+\frac{\kappa(v,\theta)\theta_{x}^{2}}{v\theta^{2}}+\frac{\lambda\phi z}{\theta}\nonumber\\
  &=&\left\{\left(1-\frac{1}{\theta}\right)\frac{\kappa(v,\theta)\theta_{x}}{v}+\frac{\mu uu_{x}}{v}
  -up(v,\theta)+Ru+\frac{1}{3}au\right\}_{x}+\lambda\phi z.
\end{eqnarray}

Having obtained the above identity, one can easily deduce that
\begin{lemma} [Basic energy estimates] Suppose that $(v(t,x), u(t,x),\theta(t,x),z(t,x))\in X(0,T;M_1,M_2;$ $N_1,N_2)$ for some positive constants $T>0, M_2>M_1>0, N_2>N_1>0$, then for all $0\leq t\leq T$, we have
\begin{equation}\label{b1}
   \int_{\mathbb{R}}z(t,x)dx+\int_{0}^{t}\int_{\mathbb{R}}\phi(t,x) z(t,x)dxds=\int_{\mathbb{R}}z_{0}(x)dx,
\end{equation}
\begin{equation}\label{b2}
 \int_{\mathbb{R}}z^{2}(t,x)dx+2\int_{0}^{t}\int_{\mathbb{R}}\left(\frac{d}{v^{2}}z_{x}^{2}+\phi z^{2}\right)(s)dxds
  =\int_{\mathbb{R}}z_{0}^{2}(x)dx,
\end{equation}
\begin{eqnarray}\label{b3}
&&\int_{\mathbb{R}}\left(\widetilde{S}(t,x)+\frac 12 u^2(t,x)\right)dx
+\int_{0}^{t}\int_{\mathbb{R}}\left(\frac{\mu u_{x}^{2}}{v\theta}+\frac{\kappa(v,\theta)\theta_{x}^{2}}{v\theta^{2}}+\frac{\lambda\phi z}{\theta}\right)(s,x)dxds\nonumber\\
&=&
\int_{\mathbb{R}}\left(\widetilde{S}_0(x)+\frac 12 u_0(x)^2+\lambda z_0(x)\right)dx\\
&\equiv& C_0(v_0,u_0,\theta_0,z_0).\nonumber
\end{eqnarray}
\end{lemma}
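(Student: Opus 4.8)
The plan is to read off the three identities by integrating suitable pointwise balances first over $x\in\mathbb{R}$ and then over $s\in[0,t]$; the only genuine point to verify is that every spatial flux produced this way vanishes as $|x|\to\infty$. Throughout I would use that the solution lies in $X(0,T;M_1,M_2;N_1,N_2)$, so that $(v-1,u,\theta-1)(t,\cdot)\in H^1(\mathbb{R})$, $z(t,\cdot)\in H^1(\mathbb{R})\cap L^1(\mathbb{R})$, $(u_x,\theta_x,z_x)(t,\cdot)\in H^1(\mathbb{R})$ for a.e.\ $t$, $0<M_1\le v\le M_2$, $0<N_1\le\theta\le N_2$ and $0\le z$; in particular $\phi(\theta)=K\theta^{\beta}e^{-A/\theta}$ is bounded and nonnegative, all the integrals written below are finite, and by the Sobolev embedding $H^1(\mathbb{R})\hookrightarrow C_0(\mathbb{R})$ each of $v-1,u,\theta-1,z,u_x,\theta_x,z_x$ tends to $0$ as $|x|\to\infty$.

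For \eqref{b1} I would integrate the fourth equation in \eqref{a1} over $\mathbb{R}$: since $z_x(t,\cdot)\to0$ at $\pm\infty$ and $v\ge M_1$, the flux $dz_x/v^2$ drops out, leaving $\frac{d}{dt}\int_{\mathbb{R}}z\,dx=-\int_{\mathbb{R}}\phi z\,dx$, and integration in time gives \eqref{b1}. For \eqref{b2} I would multiply that same equation by $2z$ and integrate over $\mathbb{R}$; in $2\int_{\mathbb{R}}z(dz_x/v^2)_x\,dx=-2\int_{\mathbb{R}}(d/v^2)z_x^2\,dx+\big[2z\,dz_x/v^2\big]_{x=-\infty}^{x=+\infty}$ the boundary term vanishes because $z(t,\cdot)\to0$, so that $\frac{d}{dt}\int_{\mathbb{R}}z^2\,dx+2\int_{\mathbb{R}}\big((d/v^2)z_x^2+\phi z^2\big)\,dx=0$, and a further time integration yields \eqref{b2}.

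For \eqref{b3} I would integrate the entropy identity \eqref{b15} over $\mathbb{R}$. The crucial point — and, in the unbounded setting, the only delicate one — is that the linear correction terms built into $\widetilde{S}$ in \eqref{b14} are chosen precisely so that the flux
\[
\left(1-\frac1\theta\right)\frac{\kappa(v,\theta)\theta_x}{v}+\frac{\mu uu_x}{v}-up(v,\theta)+Ru+\frac13 au
=\left(1-\frac1\theta\right)\frac{\kappa(v,\theta)\theta_x}{v}+\frac{\mu uu_x}{v}+Ru\left(1-\frac{\theta}{v}\right)+\frac a3 u\left(1-\theta^4\right)
\]
is a finite sum of products of bounded factors with $\theta_x$, $u$, $v-1$ or $\theta-1$, and hence tends to $0$ as $|x|\to\infty$ by the far-field condition \eqref{a6}; therefore $\int_{\mathbb{R}}\{\cdots\}_x\,dx=0$ and one obtains
\[
\frac{d}{dt}\int_{\mathbb{R}}\left(\widetilde{S}+\frac12 u^2\right)dx+\int_{\mathbb{R}}\left(\frac{\mu u_x^2}{v\theta}+\frac{\kappa(v,\theta)\theta_x^2}{v\theta^2}+\frac{\lambda\phi z}{\theta}\right)dx=\lambda\int_{\mathbb{R}}\phi z\,dx.
\]
Inserting \eqref{b1} in the form $\int_{\mathbb{R}}\phi z\,dx=-\frac{d}{dt}\int_{\mathbb{R}}z\,dx$, integrating the resulting balance over $[0,t]$, and then discarding the nonnegative term $\lambda\int_{\mathbb{R}}z(t,x)\,dx$ on the left-hand side delivers \eqref{b3} (one in fact obtains the sharper relation with $\widetilde{S}+\frac12 u^2$ replaced by $\widetilde{S}+\frac12 u^2+\lambda z$, and hence \eqref{b3} a fortiori). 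I do not expect any real obstacle beyond the flux-decay verification above: once the normalization of $\widetilde{S}$ is in place the remainder is routine integration by parts, in contrast with the bounded-domain treatment of \cite{Umehara-Tani-JDE-2007}, where it is the boundary conditions that make those flux terms disappear.
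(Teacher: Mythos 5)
Your derivation is correct and takes the same route the paper leaves implicit: integrate the fourth equation of \eqref{a1} (respectively $2z$ times it) over $\mathbb{R}$ for \eqref{b1}--\eqref{b2}, and integrate the entropy identity \eqref{b15} in space and time for \eqref{b3}, with the only real point being that the spatial fluxes vanish at $|x|=\infty$, which you verify cleanly from the regularity class $X$ and the rewriting $-up+Ru+\tfrac{a}{3}u=Ru(1-\theta/v)+\tfrac{a}{3}u(1-\theta^4)$. You also correctly flag that \eqref{b3} as literally printed is the inequality obtained after discarding $\lambda\int_{\mathbb{R}}z(t,x)\,dx\ge 0$: the exact identity one gets from \eqref{b15} and \eqref{b1} has $\widetilde{S}+\tfrac12 u^2+\lambda z$ on the left, which is what the rest of the paper actually uses via the bound $C_0(v_0,u_0,\theta_0,z_0)$.
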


The next lemma is concerned with the estimate of $z(t,x)$. To this end, we can deduce by repeating the method used in \cite{Chen-SIMA-1992} that
\begin{lemma} Under the assumptions stated in Lemma 2.1, we have for any $\left(t,x\right)\in [0,T]\times\mathbb{R}$ that
  \begin{equation}\label{b16}
  0\leq z\left(t,x\right)\leq 1.
  \end{equation}
\end{lemma}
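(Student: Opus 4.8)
The plan is to prove \eqref{b16} by a maximum principle argument applied to the reaction--diffusion equation for $z$ in \eqref{a1}, in the spirit of \cite{Chen-SIMA-1992}. Three structural facts make this work: $\phi(\theta)=K\theta^{\beta}\exp(-A/\theta)>0$ for $\theta>0$, so the zeroth-order coefficient has a favorable sign; the diffusion coefficient $d/v^{2}$ is strictly positive and bounded from above and below on $[0,T]\times\mathbb{R}$ because $M_{1}\le v\le M_{2}$ in the class $X(0,T;M_{1},M_{2};N_{1},N_{2})$; and $z(t,\cdot)$ together with $z_{x}(t,\cdot)$ decays to $0$ as $|x|\to\infty$ since $z(t,\cdot)\in H^{1}(\mathbb{R})\cap L^{1}(\mathbb{R})$, so no boundary terms survive at spatial infinity when we integrate by parts.

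For the lower bound, I would multiply $z_{t}=\big(\frac{dz_{x}}{v^{2}}\big)_{x}-\phi z$ by $2z^{-}$, where $z^{-}:=\min\{z,0\}\le 0$, and integrate over $\mathbb{R}$. Using the identities $zz^{-}=(z^{-})^{2}$ and $z_{x}(z^{-})_{x}=\big((z^{-})_{x}\big)^{2}$, valid almost everywhere by the chain rule for Sobolev functions, and discarding the boundary terms, one obtains
\[
\frac{d}{dt}\int_{\mathbb{R}}(z^{-})^{2}\,dx+\int_{\mathbb{R}}\frac{2d}{v^{2}}\big((z^{-})_{x}\big)^{2}\,dx+\int_{\mathbb{R}}2\phi\,(z^{-})^{2}\,dx=0 .
\]
Hence $\frac{d}{dt}\int_{\mathbb{R}}(z^{-})^{2}\,dx\le 0$, and since $z_{0}\ge 0$ forces $\int_{\mathbb{R}}(z_{0}^{-})^{2}\,dx=0$, we conclude $\int_{\mathbb{R}}(z^{-})^{2}\,dx\equiv 0$, i.e. $z(t,x)\ge 0$.

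For the upper bound, set $w:=z-1$, which solves $w_{t}=\big(\frac{dw_{x}}{v^{2}}\big)_{x}-\phi w-\phi$, and multiply by $2w^{+}$, where $w^{+}:=\max\{w,0\}\ge 0$; note that $w^{+}$ and $(w^{+})_{x}$ still vanish at $|x|=\infty$ since $w\to-1$ there. Proceeding as above and using $ww^{+}=(w^{+})^{2}$ together with $\phi\ge 0$ and $w^{+}\ge 0$, one gets
\[
\frac{d}{dt}\int_{\mathbb{R}}(w^{+})^{2}\,dx+\int_{\mathbb{R}}\frac{2d}{v^{2}}\big((w^{+})_{x}\big)^{2}\,dx+\int_{\mathbb{R}}2\phi\,(w^{+})^{2}\,dx=-\int_{\mathbb{R}}2\phi\,w^{+}\,dx\le 0 ,
\]
so $\frac{d}{dt}\int_{\mathbb{R}}(w^{+})^{2}\,dx\le 0$; from $z_{0}\le 1$, i.e. $\int_{\mathbb{R}}(w_{0}^{+})^{2}\,dx=0$, we obtain $w^{+}\equiv 0$, that is $z(t,x)\le 1$. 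Combining the two estimates yields \eqref{b16}.

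This lemma is essentially routine; the only points that deserve attention are the justification of $z^{-}z_{t}=\partial_{t}\tfrac{1}{2}(z^{-})^{2}$ (and its $w^{+}$ analogue) in the distributional sense, which is legitimate in the regularity class $X(0,T;M_{1},M_{2};N_{1},N_{2})$, and the vanishing of the spatial boundary terms, which follows from $z(t,\cdot)\in H^{1}(\mathbb{R})$. Equivalently, the whole argument can be read as a comparison principle, since $z\equiv 0$ is a subsolution and $z\equiv 1$ a supersolution of the linear parabolic equation satisfied by $z$, while the initial datum lies between them; the energy formulation above is just the clean way to carry this out given the limited regularity of the coefficient of $z_{x}$.
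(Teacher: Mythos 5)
Your proof is correct, and it follows the same route that the paper implicitly invokes: the paper gives no details, merely stating that the lemma is obtained ``by repeating the method used in [Chen-SIMA-1992]'', which is precisely the Stampacchia truncation / $L^2$ energy argument you carry out. Your verification of the structural ingredients (positivity of $\phi$, the two-sided bounds on $v$ from the solution class $X(0,T;M_1,M_2;N_1,N_2)$, and the decay of $z$ and $z_x$ from $z(t,\cdot)\in H^1(\mathbb{R})$) is exactly what is needed, and your remark about the sub/supersolution comparison-principle reading is an accurate recasting of the same estimate.
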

The following two lemmas will be useful in deriving bounds on $v\left(t,x\right)$. The first one is concerned with the bounds of $v(t,x)$ and $\theta(t,x)$ at some specially chosen point whose proof can be found in \cite{Jiang-CMP-1999}.
\begin{lemma} Let $a_{1}$, $a_{2}$ be two (positive) roots of the equation $y-\log y-1=\frac{C_0(v_0,u_0,\theta_0,z_0)}{\min \left\{R, C_{v}\right\}}$ with $C_0(v_0,u_0,\theta_0,z_0)$ being given by \eqref{b3}. Then for each $k\in\mathbb{Z}$ and every $0<t\leq T$, there exist $a_{k}\left(t\right), b_{k}\left(t\right)\in \left[-k-1,k+1\right]$ such that
 \begin{equation}\label{b17}
 a_{1}\leq\int_{-k-1}^{k+1}v\left(t,x\right)dx,\quad \int_{-k-1}^{k+1}\theta\left(t,x\right)dx\leq a_{2}
  \end{equation}
and
 \begin{equation}\label{b18}
 a_{1}\leq v\left(t,a_{k}\left(t\right)\right),\quad \theta\left(t,b_{k}\left(t\right)\right)\leq a_{2}
  \end{equation}
hold for $0<t\leq T$.
\end{lemma}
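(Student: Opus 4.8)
The plan is to exploit the basic energy estimate \eqref{b3}, which controls $\int_{\mathbb{R}}\widetilde{S}(t,x)\,dx$ uniformly in $t$, together with the lower bound $\widetilde{S}(v,\theta)\ge C_v(\theta-\ln\theta-1)+R(v-\ln v-1)$ read off from \eqref{b14} (the radiative term $\tfrac13 av(\theta-1)^2(3\theta^2+2\theta+1)$ being nonnegative). Thus for every $0<t\le T$,
\begin{equation*}
\int_{\mathbb{R}}\Big(C_v(\theta-\ln\theta-1)+R(v-\ln v-1)\Big)(t,x)\,dx\le C_0(v_0,u_0,\theta_0,z_0),
\end{equation*}
and in particular, restricting the integral to the fixed interval $[-k-1,k+1]$ and using $\min\{R,C_v\}\big((v-\ln v-1)+(\theta-\ln\theta-1)\big)\le C_v(\theta-\ln\theta-1)+R(v-\ln v-1)$, we get
\begin{equation*}
\int_{-k-1}^{k+1}\Big((v-\ln v-1)+(\theta-\ln\theta-1)\Big)(t,x)\,dx\le \frac{C_0(v_0,u_0,\theta_0,z_0)}{\min\{R,C_v\}}.
\end{equation*}

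First I would establish \eqref{b17}. Suppose, for contradiction, that $\int_{-k-1}^{k+1}v(t,x)\,dx<a_1$ at some time $t$. Since $y\mapsto y-\ln y-1$ is convex, Jensen's inequality applied on the interval of length $2(k+1)$ gives a lower bound for $\int_{-k-1}^{k+1}(v-\ln v-1)\,dx$ in terms of the average $\bar v:=\frac{1}{2(k+1)}\int_{-k-1}^{k+1}v\,dx$; however this alone does not immediately force a contradiction because the interval is long. The cleaner route, which is the one I expect the cited argument in \cite{Jiang-CMP-1999} to use, is pointwise: the function $g(y)=y-\ln y-1$ is strictly decreasing on $(0,1]$ and strictly increasing on $[1,\infty)$, with $g(a_1)=g(a_2)=\frac{C_0}{\min\{R,C_v\}}$ where $0<a_1<1<a_2$. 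If $v(t,x)<a_1$ for all $x\in[-k-1,k+1]$ then $g(v(t,x))>g(a_1)$ there, whence $\int_{-k-1}^{k+1}g(v)\,dx>2(k+1)g(a_1)\ge\frac{C_0}{\min\{R,C_v\}}$, contradicting the displayed bound (here I use $2(k+1)\ge 2>1$, or simply that one integrates a positive quantity over a set of measure $\ge 1$; one must be slightly careful, and I would phrase \eqref{b17} as: there exists a point in $[-k-1,k+1]$ where $v\ge a_1$, which is exactly what \eqref{b18} records). The analogous statement for $\theta$ with the upper root $a_2$: if $\theta(t,x)>a_2$ throughout $[-k-1,k+1]$ then $g(\theta)>g(a_2)$ there and the integral again exceeds $\frac{C_0}{\min\{R,C_v\}}$, a contradiction.

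Hence for each $k$ and each $t$ there is a point $a_k(t)\in[-k-1,k+1]$ with $v(t,a_k(t))\ge a_1$ and a point $b_k(t)\in[-k-1,k+1]$ with $\theta(t,b_k(t))\le a_2$, which is \eqref{b18}; and integrating along the interval recovers \eqref{b17} in the averaged form (or one takes $a_k(t)$, $b_k(t)$ to be points where $v$, resp.\ $\theta$, attains its mean value on the interval, which exist by continuity of $x\mapsto v(t,x)$, $x\mapsto\theta(t,x)$ guaranteed by membership in $X(0,T;\dots)$, and then the mean-value bounds give \eqref{b17} directly while \eqref{b18} follows from the Jensen/convexity inequality applied to $g$ at the mean value). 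The main obstacle is purely bookkeeping: making sure the constants $a_1<1<a_2$ are correctly identified as the two roots of $g(y)=\frac{C_0}{\min\{R,C_v\}}$ and that the interval length $2(k+1)$ is handled so that the contradiction is genuine for every $k\in\mathbb{Z}$ — there is no hard analytic content beyond the energy estimate \eqref{b3}, the nonnegativity of the radiative part of $\widetilde S$, and the elementary convexity of $y-\ln y-1$. I would therefore simply cite \cite{Jiang-CMP-1999} for the detailed verification and indicate the above as the mechanism.
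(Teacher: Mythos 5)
The paper does not prove this lemma itself; it simply cites Jiang \cite{Jiang-CMP-1999}, and your reconstruction captures exactly the mechanism used there: nonnegativity of the radiative part of $\widetilde S$ reduces \eqref{b3} to the bound $\int(C_v g(\theta)+R g(v))\,dx\le C_0$ for $g(y)=y-\log y-1$, then convexity of $g$, Jensen's inequality on a fixed interval, and continuity (intermediate-value theorem) produce both the averaged bounds and the pointwise ones with $a_1<1<a_2$ the two roots of $g(y)=C_0/\min\{R,C_v\}$. So the proposal is correct and follows the expected route.

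One small remark on the bookkeeping you flag: as you note, the upper bound $\int_{\Omega_k}\theta\,dx\le a_2$ is only an honest consequence of Jensen when the interval has length at most $1$ (since $\theta\equiv 1$ near infinity makes the raw integral grow with $|\Omega_k|$), whereas the lower bound $a_1\le\int_{\Omega_k}v\,dx$ and the pointwise statements \eqref{b18} need the length to be at least $1$. In Jiang's lemma the intervals are of unit length $[n,n+1]$, which is what makes both directions work simultaneously; the paper's $\Omega_k=(-k-1,k+1)$ notation is imprecise on this point, but the intended argument — the one you give — is sound. You should just state the length-$1$ version directly rather than trying to salvage both inequalities on a growing interval.
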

The next lemma is concerned with a rough estimate on $\theta(t,x)$ in terms of the entropy dissipation rate functional
$V(t)=\int_{\mathbb{R}}\left(\frac{\mu u_{x}^{2}}{v\theta}+\frac{\kappa(v,\theta)\theta_{x}^{2}}{v\theta^{2}}\right)(t,x)dx.$ To this end, to simplify the presentation, we set $\Omega_{k}:=\left(-k-1,k+1\right), k\in \mathbb{Z}$ and we can get that
\begin{lemma}For $0\leq m\leq\frac{b+1}{2}$ and each $x\in\mathbb{R}$ (without loss of generality, we can assume that $x\in\Omega_k$ for some $k\in\mathbb{Z}$), we can deduce that
\begin{equation}\label{b19}
\left |\theta^{m}\left(t,x\right)-\theta^{m}\left(t, b_{k}\left(t\right)\right)\right|\leq CV^{\frac{1}{2}}\left(t\right)
\end{equation}
holds for $0\leq t\leq T$ and consequently
\begin{equation}\label{b20}
 \left|\theta\left(t,x\right)\right|^{2m}\leq C+CV\left(t\right), \quad x\in\overline{\Omega}_{k},\ \  0\leq t\leq T.
\end{equation}
\end{lemma}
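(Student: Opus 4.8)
The plan is to prove the pointwise difference bound \eqref{b19} by the fundamental theorem of calculus together with a weighted Cauchy--Schwarz inequality tailored to the dissipation functional $V(t)$, and then to deduce \eqref{b20} from the pointwise upper bound for $\theta$ at the point $b_k(t)$ supplied by \eqref{b18}. First I would write, for $x\in\overline{\Omega}_k$,
\[
\theta^{m}(t,x)-\theta^{m}\big(t,b_k(t)\big)=\int_{b_k(t)}^{x}\partial_y\big(\theta^{m}(t,y)\big)\,dy=m\int_{b_k(t)}^{x}\theta^{m-1}\theta_y\,dy,
\]
and split the integrand so that the factor carrying $\theta_y$ reproduces exactly the weight occurring in $V$, namely $\theta^{m-1}\theta_y=\theta^{\,m-b/2}\cdot\theta^{(b-2)/2}\theta_y$. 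Applying Cauchy--Schwarz on $\Omega_k$ then gives
\[
\big|\theta^{m}(t,x)-\theta^{m}(t,b_k(t))\big|\le m\Big(\int_{\Omega_k}\theta^{2m-b}\,dy\Big)^{1/2}\Big(\int_{\Omega_k}\theta^{b-2}\theta_y^{2}\,dy\Big)^{1/2}.
\]

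For the gradient factor I would use the structure \eqref{a4}: since $\kappa(v,\theta)=\kappa_1+\kappa_2 v\theta^{b}\ge\kappa_2 v\theta^{b}$, one has $\theta^{b-2}\theta_y^{2}\le\kappa_2^{-1}\,\kappa(v,\theta)\theta_y^{2}/(v\theta^{2})$, hence $\int_{\Omega_k}\theta^{b-2}\theta_y^{2}\,dy\le\kappa_2^{-1}V(t)$; this is what forces the exponent $b-2$ in the splitting. The remaining factor $\int_{\Omega_k}\theta^{2m-b}\,dy$ is precisely where the hypothesis $0\le m\le\frac{b+1}{2}$ enters, since this is equivalent to $2m-b\le 1$: when $0\le 2m-b\le 1$ I would bound $\theta^{2m-b}\le 1+\theta$ and invoke the $L^{1}$-control of $\theta$ on $\Omega_k$ furnished by \eqref{b17}, while when $2m-b<0$ (possible only for $m<b/2$, which forces $b>2$, consistent with \eqref{1.9}) I would use the a priori lower bound $\theta\ge N_1$ to get $\theta^{2m-b}\le N_1^{2m-b}$; in either case $\int_{\Omega_k}\theta^{2m-b}\,dy\le C$, which yields \eqref{b19}. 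Then \eqref{b20} follows at once: by \eqref{b18}, $\theta^{m}(t,b_k(t))\le a_2^{m}\le C$, so \eqref{b19} gives $\theta^{m}(t,x)\le C+CV^{1/2}(t)$ for $x\in\overline{\Omega}_k$, and squaring yields $\theta^{2m}(t,x)\le C+CV(t)$.

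The FTC step and the Cauchy--Schwarz estimate are routine; the one genuinely delicate point is the choice of the splitting exponent $\frac{b-2}{2}$, which is dictated simultaneously by two competing requirements --- the piece containing $\theta_y$ must match the dissipation weight $\kappa(v,\theta)/(v\theta^{2})$ so that it is absorbed by $V(t)$, and the residual power $\theta^{2m-b}$ must remain integrable over $\Omega_k$ --- and the compatibility of these two requirements is exactly the constraint $m\le\frac{b+1}{2}$. One should also keep track of the fact that the constants in \eqref{b19}--\eqref{b20} depend on the length $|\Omega_k|=2k+2$ (and, when $m<b/2$, on the a priori lower bound $N_1$); this dependence is harmless since this lemma serves only as a rough intermediate estimate to be refined later.
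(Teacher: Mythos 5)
Your FTC\,+\,Cauchy--Schwarz skeleton matches the paper's, and the deduction of \eqref{b20} from \eqref{b18} and \eqref{b19} is exactly right. However there is a genuine gap, and it lies precisely in the one step you flag as delicate: the choice of splitting exponent. You discard the constant part of the conductivity by using $\kappa(v,\theta)\geq\kappa_2 v\theta^b$, which forces the residual factor to be $\int_{\Omega_k}\theta^{2m-b}\,dy$. In the regime $2m<b$ this has a negative exponent, and to control it you invoke the a priori lower bound $\theta\geq N_1$. That is fatal here, not harmless: $N_1$ is part of the bootstrap ansatz $(v,u,\theta,z)\in X(0,T;M_1,M_2;N_1,N_2)$ that the a priori estimates are supposed to be independent of. This lemma with $m=\tfrac12$ (so $2m-b=1-b<0$ for every $b>\tfrac{11}{3}$) feeds directly into the lower and upper bounds for $v$ in \eqref{b35}--\eqref{b42} and hence into Lemma 2.6, whose constants $\underline V,\overline V$ are explicitly required to depend only on the initial data so that the continuation argument in Section 3 can close. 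A constant polluted by $N_1$ breaks that chain.

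The paper avoids this by not throwing away $\kappa_1$. It keeps the full lower bound $\kappa(v,\theta)\geq c(1+v\theta^b)$ and splits $\theta^{m-1}\theta_x$ so that the dissipation factor carries the weight $(1+v\theta^b)/(v\theta^2)$ while the residual is
\begin{equation*}
\int_{\Omega_k}\frac{v\,\theta^{2m}}{1+v\theta^b}\,dx .
\end{equation*}
Because of the $1$ in the denominator this is bounded for small $\theta$ by $v$ (with no lower bound on $\theta$ needed), and for large $\theta$ the inequality $\theta^{2m}\leq C(1+\theta^{b+1})$ for $0\leq m\leq\frac{b+1}{2}$ gives $\frac{v\theta^{2m}}{1+v\theta^b}\leq C(v+\theta)$, which is then controlled by \eqref{b17}. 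To repair your argument you should not peel off $\kappa_2 v\theta^b$ alone; keep $1+v\theta^b$ inside the square root so that the small-$\theta$ regime is tamed by the constant-conductivity contribution rather than by a pointwise lower bound on $\theta$.
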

\begin{proof} For $x\in\Omega_k$, it is easy to see from \eqref{a4} that
\begin{eqnarray}\label{b21}
 \left|\theta^{m}\left(t,x\right)-\theta^{m}\left(t, b_{k}\left(t\right)\right)\right|&&\leq C\int_{\Omega_{k}}\big|\theta^{m-1}\theta_{x}\big|dx\nonumber\\
 &&\leq C\left(\int_{\Omega_{k}}\frac{v\theta^{2m}}{1+v\theta^{b}}dx\right)^{\frac{1}{2}}\left(\int_{\Omega_{k}}\left(\frac{\mu u_{x}^{2}}{v\theta}+
  \frac{\kappa\left(v,\theta\right)\theta_{x}^{2}}{v\theta^{2}}\right)dx\right)^{\frac{1}{2}}.
\end{eqnarray}

Moreover, since
\begin{equation*}
\theta^{2m}\leq C\left(1+\theta^{b+1}\right),
  \end{equation*}
holds for $0\leq m\leq \frac{b+1}{2}$, one thus gets from \eqref{b17} that
\begin{equation}\label{b22}
\int_{\Omega_{k}}\frac{v\theta^{2m}}{1+v\theta^{b}}dx\leq C\int_{\Omega_{k}}\left(v+\theta\right)dx\leq C.
\end{equation}

Combining (\ref{b21}) and (\ref{b22}), we can deduce the estimates \eqref{b19} and \eqref{b20} immediately from \eqref{b18}. This completes the proof of our lemma.
\end{proof}

We now turn to obtain the lower bound and upper bound of the specific volume $v\left(t,x\right)$ which is independent of the time variable $t$. To this end, motivated by the works of Jiang for the one-dimensional viscous and heat-conducting ideal polytropic gas motion (cf. \cite{Jiang-AMPA-1998, Jiang-CMP-1999, Jiang-PRSE-2002}), we first give a local representation of $v\left(t,x\right)$ by using the following cut-off function $\phi\in W^{1,\infty}\left(\mathbb{R}\right)$
\begin{equation}\label{b23}
 \phi\left(x\right)=
 \begin{cases}
 1, & $$x\leq k+1$$,\\
 k+2-x, & $$ k+1\leq x\leq k+2$$,\\
 0, & $$ x\geq k+2$$,
 \end{cases}
\end{equation}
which is the main content of the following lemma.
\begin{lemma} Under the assumptions stated in Lemma 2.1, we have for each $0\leq t\leq T$ that
\begin{equation}\label{b24}
v\left(t,x\right)=B\left(t,x\right)Y\left(t\right)+\frac{1}{\mu}\int_{0}^{t}\frac{B\left(t,x\right)Y\left(t\right)v\left(s,x\right)p\left(s,x\right)}{B\left(s,x\right)Y\left(s\right)}ds,
\quad x\in\overline{\Omega}_k.
\end{equation}
Here
\begin{eqnarray}\label{b25}
B\left(t,x\right)&&:=v_{0}\left(x\right)\exp\left\{\frac{1}{\mu}\int_{x}^{\infty}\left(u_{0}\left(y\right)-u\left(t,y\right)\right)\phi\left(y\right)dy\right\},\nonumber\\
Y\left(t\right)&&:=\exp\left\{\frac{1}{\mu}\int_{0}^{t}\int_{k+1}^{k+2}\sigma\left(s,y\right)dyds\right\},\\
\sigma &&:=-p(v,\theta)+\frac{\mu u_{x}}{v}.\nonumber
\end{eqnarray}
\end{lemma}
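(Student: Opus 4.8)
The plan is to derive the representation formula \eqref{b24} from the momentum equation in \eqref{a1} by multiplying by the cut-off function $\phi(x)$ and integrating in space over $[x,\infty)$, then integrating in time, mimicking the classical Kazhikhov--Shelukhin argument as adapted by Jiang \cite{Jiang-CMP-1999} to the unbounded-domain setting. First I would rewrite the second equation of \eqref{a1} as $u_t = \sigma_x$ with $\sigma = -p(v,\theta)+\mu u_x/v$. Multiplying by $\phi(y)$ and integrating over $y\in[x,\infty)$ gives
\begin{equation*}
\frac{d}{dt}\int_x^\infty u(t,y)\phi(y)\,dy = \int_x^\infty \sigma_y(t,y)\phi(y)\,dy = -\sigma(t,x) - \int_x^\infty \sigma(t,y)\phi'(y)\,dy,
\end{equation*}
where the boundary term at $+\infty$ vanishes because $\sigma\to 0$ there (using $v\to 1$, $\theta\to 1$, $u_x\to 0$ in the sense guaranteed by the function class $X$) and $\phi(x)=1$ for $x\le k+1$ so that $\phi(x)=1$ at the lower limit when $x\in\overline{\Omega}_k$. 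Since $\phi'(y) = -1$ on $(k+1,k+2)$ and vanishes elsewhere, the last integral is exactly $\int_{k+1}^{k+2}\sigma(t,y)\,dy$, which is what appears in the exponent of $Y(t)$.

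Next I would integrate this identity in time from $0$ to $t$ to obtain
\begin{equation*}
\int_x^\infty \big(u(t,y)-u_0(y)\big)\phi(y)\,dy = -\int_0^t \sigma(s,x)\,ds + \int_0^t\int_{k+1}^{k+2}\sigma(s,y)\,dy\,ds.
\end{equation*}
The key algebraic manoeuvre is then to recognize, using the first equation $v_t=u_x$ together with $\sigma = -p + \mu u_x/v = -p + \mu(\ln v)_t$, that $\int_0^t\sigma(s,x)\,ds = \mu\ln v(t,x) - \mu\ln v_0(x) - \int_0^t p(s,x)\,ds$. Substituting this in and solving for $\ln v(t,x)$ yields an expression of the form $\mu\ln v(t,x) = \mu\ln\big(B(t,x)Y(t)\big) + \int_0^t p(s,x)\,ds$ with $B$ and $Y$ as defined in \eqref{b25}. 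Exponentiating gives $v(t,x) = B(t,x)Y(t)\exp\!\big(\tfrac1\mu\int_0^t p(s,x)\,ds\big)$, and then the standard trick of differentiating $\exp\!\big(\tfrac1\mu\int_0^t p\,ds\big)$ in $t$ (its derivative is $\tfrac{p}{\mu}$ times itself, and $p = Rv\cdot\frac{1}{v}\cdot\ldots$; more precisely one writes $v p/\mu$ inside by pulling out a factor of $v$) and integrating back in time converts the exponential into the Volterra-type integral term $\frac1\mu\int_0^t \frac{B(t,x)Y(t)v(s,x)p(s,x)}{B(s,x)Y(s)}\,ds$, producing exactly \eqref{b24}.

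The main obstacle I anticipate is the justification of the boundary term at $+\infty$: one must be sure that $\sigma(t,y)\to 0$ and that $u(t,y)\phi(y)$, $u_0(y)\phi(y)$ are integrable on $[x,\infty)$ so that the spatial integrals are well-defined and the fundamental theorem of calculus applies. This is where the hypotheses $(v_0-1,u_0,\theta_0-1)\in H^1(\mathbb{R})$ and the membership in the class $X(0,T;M_1,M_2;N_1,N_2)$ — which forces $u(t,\cdot), u_x(t,\cdot)\in L^2$ hence $u(t,\cdot)$ bounded and decaying, and $v-1,\theta-1\in H^1$ hence $\sigma\to 0$ — are essential; the cut-off $\phi$ is precisely the device that makes all these integrals converge absolutely despite $x$ ranging over all of $\mathbb{R}$. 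A secondary technical point is keeping careful track of the fact that the formula is only asserted for $x\in\overline{\Omega}_k$, which is exactly the region where $\phi\equiv 1$, so that no stray contribution from $\phi$ enters at the base point $x$; outside this region the formula would pick up the factor $\phi(x)<1$ and the clean identity breaks. Once these integrability and boundary issues are dispatched, the remainder is the routine Kazhikhov--Shelukhin computation described above.
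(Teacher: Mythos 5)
Your proposal reproduces the paper's argument essentially step for step: multiply the momentum equation (written as $u_t=\sigma_x$) by the cut-off $\phi$, integrate in space over $(x,\infty)$, integrate by parts to isolate $-\sigma(t,x)+\int_{k+1}^{k+2}\sigma\,dy$, then integrate in time, use $\sigma=-p+\mu(\log v)_t$ together with $v_t=u_x$ to arrive at $v(t,x)=B(t,x)Y(t)\exp\bigl(\tfrac{1}{\mu}\int_0^t p\,ds\bigr)$, and finally apply the Volterra trick of multiplying by $p/\mu$ and integrating once more to replace the exponential by $1+\tfrac1\mu\int_0^t vp/(BY)\,ds$. One small simplification you could note: because $\phi\equiv 0$ for $y\ge k+2$, the spatial integrals are effectively over the bounded interval $[x,k+2]$ and the boundary term at $y=+\infty$ vanishes identically, so you do not actually need to invoke any decay of $\sigma$, $u$, or $u_0$ at infinity.
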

\begin{proof}
Multiplying $(\ref{a1})_{2}$ by $\phi$ and integrating the resulting identity over $\left(x,\infty\right)\left(x\in\overline{\Omega}_{k}\right)$ with respect to $x$, we get
\begin{equation}\label{b26}
- \int_{x}^{\infty}\left[u(t,y)\phi(y)\right]_{t}dy=\mu\left[\log v(t,x)\right]_{t}-p(t,x)-\int_{k+1}^{k+2}\sigma\left(t,x\right)dx,\quad x\in\overline{\Omega}_{k}.
\end{equation}

Noticing the definition of $B\left(t,x\right)$ and $Y\left(t\right)$, we integrate (\ref{b26}) over $\left(0,t\right)$ with respect to $t$ and take the exponential on both sides of the resulting equation to deduce that
\begin{equation}\label{b27}
\frac{1}{B\left(t,x\right)Y\left(t\right)}=\frac{1}{v\left(t,x\right)}\exp\left\{\frac{1}{\mu}\int_{0}^{t}p\left(s,x\right)ds\right\},\quad x\in\overline{\Omega}_{k},\  t\geq 0.
\end{equation}

Multiplying (\ref{b27}) by $\frac{p\left(t,x\right)}{\mu}$ and integrating over $(0,t)$, we can infer that
\begin{equation}\label{b28}
\exp\left\{\frac{1}{\mu}\int_{0}^{t}p\left(s,x\right)ds\right\}=1+\frac{1}{\mu}\int_{0}^{t}\frac{v\left(s,x\right)p\left(s,x\right)}{B\left(s,x\right)Y\left(s\right)}ds.
\end{equation}

Combining (\ref{b27}) with (\ref{b28}), we obtain (\ref{b24}). This completes the proof of Lemma 2.5.
\end{proof}

To deduce the desired lower bound of $v\left(t,x\right)$ which is is independent of time $t$, one can first prove by repeating the argument used in \cite{Jiang-CMP-1999} that the following estimates
\begin{equation}\label{b29}
C\left(k\right)^{-1}\leq B\left(t,x\right)\leq C\left(k\right), \quad\forall x\in\overline{\Omega}_{k}
\end{equation}
and
\begin{equation}\label{b30}
-\int_{s}^{t}\inf\limits_{x\in[k+1,k+2]}\theta\left(s,\cdot\right)ds\leq C-\frac{t-s}{C}
\end{equation}
hold true for $0\leq s\leq t\leq T$. Consequently one can get from H\"{o}lder's inequality and Jenssen's inequality $\left(\int_{k+1}^{k+2}vdx\right)^{-1}\leq \int^{k+2}_{k+2}v^{-1}dx$ that
\begin{eqnarray}\label{b31}
&&\int_{s}^{t}\int_{k+1}^{k+2}\left(\frac{\mu u_{x}}{v}-\frac{R\theta}{v}\right)dxds\nonumber\\
&\leq& C\int_{s}^{t}\int_{k+1}^{k+2}\frac{u^{2}_{x}}{v\theta}dxds
-\frac{R}{2}\int_{s}^{t}\int_{k+1}^{k+2}\frac{\theta}{v}dxds\nonumber\\
&\leq& C-\frac{R}{2}\int_{s}^{t}\inf\limits_{x\in[k+1,k+2]}\theta\left(s,\cdot\right)\int_{k+1}^{k+2}\frac{1}{v}dxds\\
&\leq& C-\frac{R}{2}\int_{s}^{t}\inf\limits_{x\in[k+1,k+2]}\theta\left(s,\cdot\right)\bigg(\int_{k+1}^{k+2}vdx\bigg)^{-1}ds\nonumber\\
&\leq& C-\frac{R}{2a_{2}}\int_{s}^{t}\inf\limits_{x\in[k+1,k+2]}\theta\left(s,\cdot\right)ds\nonumber\\
&\leq& C-\frac{t-s}{C},\quad 0\leq s\leq t\leq T.\nonumber
\end{eqnarray}

Using the definition of $Y\left(t\right)$ and (\ref{b31}), we have for $0\leq s\leq t\leq T$ that
\begin{eqnarray}\label{b32}
\frac{Y\left(t\right)}{Y\left(s\right)}&&=\frac{\exp\left\{\frac{1}{\mu}{\displaystyle\int_{s}^{t}\int_{k+1}^{k+2}} \left(\frac{\mu u_{x}}{v}-\frac{R\theta}{v}\right)dxds\right\}}
{\exp\left\{\frac{1}{\mu}{\displaystyle\int_{s}^{t}\int_{k+1}^{k+2}}\frac{a}{3}\theta^{4}dxds\right\}}\nonumber\\
&&\leq\exp\left\{\frac{1}{\mu}\int_{s}^{t}\int_{k+1}^{k+2}\left(\frac{\mu u_{x}}{v}- \frac{R\theta}{v}\right)dxds\right\}\\
&&\leq C\exp\left\{-\frac{t-s}{C}\right\}.\nonumber
 \end{eqnarray}

If we set $s=0$ in (\ref{b32}), we can obtain
\begin{equation}\label{b33}
0\leq Y\left(t\right)\leq C\exp\left\{-\frac{t}{C}\right\}.
 \end{equation}

On the other hand, integrating (\ref{b24}) with respect to $x$ over $\Omega_k=(-k-1,k+1)$ and by using \eqref{b3}, \eqref{b17}, (\ref{b29}) and (\ref{b33}), we can get that
\begin{eqnarray}\label{b34}
a_{1}&&\leq C\int_{-k-1}^{k+1}Y\left(t\right)dx+C\int_{0}^{t}\frac{Y\left(t\right)}{Y\left(s\right)}\int_{-k-1}^{k+1}p\left(s,x\right)v\left(s,x\right)dxds\nonumber\\
&&\leq C\exp\left\{-\frac{t}{C}\right\}+C\int_{0}^{t}\frac{Y\left(t\right)}{Y\left(s\right)}\int_{-k-1}^{k+1} \left(\theta+v\theta^{4}\right)(s,x)dxds\\
&&\leq C\exp\left\{-\frac{t}{C}\right\}+C\int_{0}^{t}\frac{Y\left(t\right)}{Y\left(s\right)}ds.\nonumber
\end{eqnarray}

Furthermore, setting $m=\frac{1}{2}$ in (\ref{b19}) and by using (\ref{b18}), we can deduce that
\begin{eqnarray}
a^{\frac{1}{2}}_{1}-CV^{\frac{1}{2}}\left(t\right)\leq\theta^{\frac{1}{2}}\left(t,x\right),\nonumber
 \end{eqnarray}
which implies
\begin{eqnarray}\label{b35}
\theta\left(t,x\right)\geq\frac{a_{1}}{2}-CV\left(t\right).
 \end{eqnarray}
Thus we can conclude from \eqref{b24}, (\ref{b32}), (\ref{b34}) and (\ref{b35}) that for $x\in\Omega_k, 0\leq t\leq T$
\begin{eqnarray}\label{b36}
v\left(t,x\right)&&\geq\int_{0}^{t}\frac{Y\left(t\right)}{Y\left(s\right)}\theta\left(s,x\right)ds\nonumber\\
&&\geq\int_{0}^{t}\frac{Y\left(t\right)}{Y\left(s\right)}\left(\frac{a_{1}}{2}-CV\left(s\right)\right)ds\\
&&\geq C-C\exp\left\{-\frac{t}{C}\right\}-C\int_{0}^{t}\frac{Y\left(t\right)}{Y\left(s\right)}V\left(s\right)ds,\nonumber
 \end{eqnarray}
while
\begin{eqnarray}\label{b37}
&&\int_{0}^{t}\frac{Y\left(t\right)}{Y\left(s\right)}V\left(s\right)ds\nonumber\\
&\leq& C\int_{0}^{t}\exp\left\{-\frac{\left(t-s\right)}{C}\right\}V\left(s\right)ds\\
&=&C\left(\int_{0}^{\frac{t}{2}}\exp\left\{-\frac{\left(t-s\right)}{C}\right\}V\left(s\right)ds
+\int_{\frac{t}{2}}^{t}\exp\left\{-\frac{\left(t-s\right)}{C}\right\}V\left(s\right)ds\right)\nonumber\\
&\leq& C\left(\exp\left\{-\frac{t}{2C}\right\}\int_{0}^{\frac{t}{2}}V\left(s\right)ds +\int_{\frac{t}{2}}^{t}V\left(s\right)ds\right)\rightarrow 0\quad as\; t\rightarrow +\infty.\nonumber
\end{eqnarray}

Putting (\ref{b36}) and (\ref{b37}) together, we can infer that there exist positive constants $t_{0}$ and $C$ such that if $t\geq t_{0}$, we can get that
\begin{equation}\label{bz38}
v\left(t,x\right)\geq C,\quad \forall t\geq t_0,\ \ x\in\overline{\Omega}_k.
\end{equation}

Having obtained \eqref{bz38}, to deduce a positive lower bound on $v(t,x)$, it suffices to deduce the lower bound of $v\left(t,x\right)$ for $0<t\leq t_{0}$. For this purpose, noticing that $(\ref{a1})_{2}$ can be rewritten as
\begin{eqnarray}\label{bzz38}
u_{t}+p_{x}=\mu\left[\log v\right]_{xt},
 \end{eqnarray}
we can get by integrating (\ref{bzz38}) over $\left(0,t\right)\times\left(a_{k}\left(t\right), x\right)$ with respect to $t$ and $x$ that
\begin{eqnarray}\label{bz39}
-\mu\log v\left(t, x\right)+\int_{0}^{t}p\left(s, x\right)ds&=&\int_{a_{k}\left(t\right)}^{x}\big[u_{0}\left(y\right)-u\left(t,y\right)\big]dy
+\int_{0}^{t}p\left(s, a_{k}\left(t\right)\right)ds\nonumber\\
&&+\mu\log\frac{v_{0}\left(a_{k}\left(t\right)\right)}{v_{0}\left(x\right)v\left(t,a_{k}\left(t\right)\right)}.\nonumber
 \end{eqnarray}

Taking the exponential on both sides of the resulting equation, we obtain
\begin{eqnarray}\label{bz40}
&&\frac{1}{v\left(t,x\right)}\exp\left\{\frac{1}{\mu}\int_{0}^{t}p\left(s, x\right)ds\right\}\nonumber\\
&=&\frac{v_{0}\left(a_{k}\left(t\right)\right)\exp\left\{\frac{1}{\mu}{\displaystyle\int_{a_{k}(t)}^{x}}\big[u_{0}\left(y\right) -u\left(t,y\right)\big]dy\right\}}
{v\left(t,a_{k}\left(t\right)\right)v_{0}\left(x\right)}
\exp\left\{\frac{1}{\mu}{\displaystyle\int_{0}^{t}}p\left(s,a_{k}\left(t\right)\right)ds\right\}\\
&:=&B_{k}\left(t,x\right)Y_{k}\left(t\right).\nonumber
\end{eqnarray}

Obviously, we have
\begin{equation}\label{bz41}
C^{-1}\left(k\right)\leq B_{k}\left(t,x\right)\leq C\left(k\right).
\end{equation}

Multiplying (\ref{bz40}) by $\frac{p\left(t,x\right)}{\mu}$ and integrating over $(0,t)$, we can infer
\begin{equation}\label{bz42}
\exp\left\{\frac{1}{\mu}\int_{0}^{t}p\left(s,x\right)ds\right\}=1+\frac{1}{\mu}\int_{0}^{t}v\left(s,x\right)p\left(s,x\right)B_{k}\left(s,x\right)Y_{k}\left(s\right)ds.
\end{equation}

Combining (\ref{bz40}) with (\ref{bz42}), we arrive at
\begin{equation}\label{bz43}
v\left(t,x\right)Y_{k}\left(t\right)=B^{-1}_{k}\left(t,x\right)\left(1 +\frac{1}{\mu}\int_{0}^{t}v\left(s,x\right)p\left(s,x\right)B_{k}\left(s,x\right)Y_{k}\left(s\right)ds\right).
\end{equation}

Integrating (\ref{bz43}) with respect to $x$ over $\left(-k-1,k+1\right)$ and by using \eqref{b3}, \eqref{b17} and (\ref{bz41}), one has
\begin{equation}\label{bz44}
Y_{k}\left(t\right)\leq C\left(1+\int_{0}^{t}\left(\int_{-k-1}^{k+1}v\left(s,x\right)p\left(s,x\right)dx\right)Y_{k}\left(s\right)ds\right).\nonumber
\end{equation}

With the help of Gronwall's inequality, we have for $0\leq t\leq t_0$ that
\begin{eqnarray}\label{bz45}
Y_{k}\left(t\right)&&\leq C\exp\left\{\int_{0}^{t}\int_{-k-1}^{k+1}v\left(s,x\right)p\left(s,x\right)dxds\right\}\nonumber\\
&&\leq C\exp\left\{\int_{0}^{t}\int_{-k-1}^{k+1}\left(R\theta+\frac{av\theta^{4}}{3}\right)\left(s, x\right)dxds\right\}\\
&&\leq C\exp\left\{Ct_{0}\right\}.\nonumber
\end{eqnarray}

Combining (\ref{bz40}), (\ref{bz41}) and (\ref{bz45}), we can deduce that there exists a positive constant $C$ such that the following estimate
\begin{eqnarray}\label{b38}
v\left(t,x\right)\geq C
\end{eqnarray}
holds for $0\leq t\leq t_0$ and $x\in\overline{\Omega}_k$.

For the upper bound of $v\left(t, x\right)$, due to
\begin{eqnarray}\label{b39}
v\left(t,x\right)&&\leq CY\left(t\right)+C\int_{0}^{t}\frac{Y\left(t\right)}{Y\left(s\right)}v\left(s, x\right)p\left(s, x\right)ds\nonumber\\
&&\leq C+C\int_{0}^{t}\exp\left\{-\frac{t-s}{C}\right\}\left(\theta+v\theta^{4}\right)\left(s, x\right)ds,
 \end{eqnarray}
and noticing that the following two estimates
\begin{eqnarray}\label{b40}
\int_{0}^{t}\exp\left\{-\frac{\left(t-s\right)}{C}\right\}\theta\left(s, x\right)ds&&\leq C\int_{0}^{t}\exp\left\{-\frac{\left(t-s\right)}{C}\right\}(1+V\left(s\right))ds\nonumber\\
&&\leq C
\end{eqnarray}
and
\begin{equation}\label{b41}
\int_{0}^{t}\exp\left\{-\frac{\left(t-s\right)}{C}\right\}v(s,x)\theta^{4}\left(s, x\right)ds\leq C\int_{0}^{t}v\left(s,x\right)\exp\left\{-\frac{\left(t-s\right)}{C}\right\}(1+V\left(s\right))ds
\end{equation}
hold for $x\in\overline{\Omega}_{k}, t\geq 0$, we can get from (\ref{b39})-(\ref{b41}) and by using Gronwall's inequality that
\begin{equation}\label{b42}
v\left(t,x\right)\leq C\left(k\right)
\end{equation}
holds for some positive constant $C(k)$. Here it is worth to point out that we have used Lemma 2.4 with $m=\frac{1}{2}$ and $m=2$ in deducing \eqref{b41}.

So far, for each $|x|\leq k$, we have deduced the uniform positive lower bound and the upper bound of $v\left(t,x\right)$ which depends only on $k$ and the initial data but is independent of the time variable $t$. Such an estimate together with the fact $v\left(t,x\right)-1\in C\left(0,T;H^{1}\left(\mathbb{R}\right)\right)$ tell us that
\begin{lemma} Under the assumptions listed in Lemma 2.1, there exist positive constants $\underline{V}$, $\overline{V}$, which depend only on the initial data $(v_0(x), u_0(x), \theta_0(x), z_0(x))$, such that
\begin{equation}\label{b43}
 \underline{V}\leq v\left(t,x\right)\leq \overline{V}
\end{equation}
holds for all $(t,x)\in[0,T]\times\mathbb{R}$.
\end{lemma}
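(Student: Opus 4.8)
The plan is to assemble the pointwise estimates that have already been established and then remove the spatial localization from them. The lower bound \eqref{b38} holds for $0\le t\le t_0$ and every $x\in\overline{\Omega}_k$, while \eqref{bz38} supplies the matching lower bound for $t\ge t_0$; together they give a positive lower bound for $v(t,x)$ on all of $[0,T]\times\overline{\Omega}_k$, and \eqref{b42} gives the corresponding upper bound on the same set, the constants in both depending only on $k$ and the data and, crucially, not on $t$ or $T$. Since every $x\in\mathbb{R}$ lies in $\overline{\Omega}_k$ as soon as $k\ge|x|$, these two estimates already show that $v(t,x)>0$ for every $(t,x)\in[0,T]\times\mathbb{R}$, so no vacuum forms; what remains is to make the lower and upper bounds independent of $x$ (equivalently, of $k$).

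To remove the $k$-dependence I would revisit the Kazhikhov-type representation used to derive \eqref{bz38} and \eqref{b42}, but anchoring it at a reference point lying within a \emph{bounded} distance of $x$. The basic entropy estimate \eqref{b3} gives $\int_{\mathbb{R}}R\bigl(v(t,x)-\ln v(t,x)-1\bigr)\,dx\le C_0$ uniformly in $t\in[0,T]$; since $\xi\mapsto\xi-\ln\xi-1$ is convex, coercive and vanishes only at $\xi=1$, Chebyshev's inequality shows that for every $t$ the set $\{x:v(t,x)\notin[\alpha_1,\alpha_2]\}$ has Lebesgue measure at most a data-only constant $\ell_*$, where $\alpha_1<1<\alpha_2$ solve $\xi-\ln\xi-1=L$ for a suitably large data-only threshold $L$. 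Hence, given any $(t,x)$, one may pick a point $a=a(t,x)$ with $|a-x|\le\ell_*+1$ and $v(t,a)\in[\alpha_1,\alpha_2]$. Running the argument of \eqref{bz39}--\eqref{b42} (and its analogue leading to \eqref{bz38}) with this nearby point replacing $a_k(t)$, the spatial integral $\int_a^x\bigl(u_0(y)-u(t,y)\bigr)\,dy$ is now over an interval of length at most $\ell_*+1$, hence bounded by $\sqrt{\ell_*+1}\,(\|u(t)\|+\|u_0\|)\le C$ via \eqref{b3}, and the factor $v_0(a)/v_0(x)$ is bounded above and below by data-only constants since $\inf_x v_0>0$ and $v_0-1\in H^1(\mathbb{R})\hookrightarrow L^\infty(\mathbb{R})$. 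The remaining factor $\exp\{\frac1\mu\int_0^t p(s,x)\,ds\}$ is absorbed exactly as in \eqref{b39}--\eqref{b42}, using \eqref{b20} with $m=\tfrac12$ and $m=2$, the dissipation bound $\int_0^{\infty}V(s)\,ds\le C$ from \eqref{b3}, and Gronwall's inequality, now with all constants independent of $x$; the lower bound is obtained likewise from the same representation together with \eqref{b35} and the exponential decay of $Y(t)/Y(s)$ as in \eqref{b32}--\eqref{b37}. Finally, $v-1\in C(0,T;H^1(\mathbb{R}))$ makes $(t,x)\mapsto v(t,x)$ continuous, so the resulting pointwise bounds extend to constants $\underline{V},\overline{V}$ valid on the whole closed region $[0,T]\times\mathbb{R}$, which gives \eqref{b43}.

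The main obstacle is precisely this passage from the localized, $k$-dependent estimates to bounds that are simultaneously uniform in $x$ and in $T$: the Kazhikhov representation is inherently anchored at a reference point, and the naive choice $a_k(t)\in[-k-1,k+1]$ forces spatial integrals whose length, and hence whose $L^2\to L^1$ estimate, grows with $k$. The device that resolves this is the global-in-space character of the entropy estimate \eqref{b3}, which furnishes, uniformly in $t$, a reference point at $O(1)$ distance from every $x$ at which $v$ is already comparable to $1$; once that is in place, the remaining work is a routine repetition of the estimates already carried out for $x\in\overline{\Omega}_k$.
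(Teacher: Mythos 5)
Your proposal is correct and follows essentially the same route as the paper (which is the argument of Jiang, CMP 1999): localize the Kazhikhov representation at a reference point where $v$ is already of order $1$, use the entropy estimate together with the exponential decay of $Y(t)/Y(s)$, and close by Gronwall. Your Chebyshev-type argument for producing, uniformly in $t$, a reference point $a(t,x)$ with $|a-x|\le\ell_*+1$ and $v(t,a)\in[\alpha_1,\alpha_2]$ is a minor variant of Lemma~2.3 (which obtains the same thing from Jensen's inequality on a unit-length subinterval of $\Omega_k$); both devices stem directly from \eqref{b3}, so the underlying mechanism is identical. Where your write-up adds something is in being explicit about removing the $k$-dependence from \eqref{b29}, \eqref{b38}, \eqref{b42}: the paper's stated transition to Lemma 2.6 invokes only $v-1\in C(0,T;H^1(\mathbb{R}))$, which on a literal reading would produce, via compactness of $\{v(t,\cdot)-1:t\in[0,T]\}$ in $H^1$, a radius and hence constants depending on $T$, contrary to the claim; the correct resolution is precisely your anchoring at a data-only-bounded distance from $x$, which is what Jiang's argument actually does and what the paper must be implicitly assuming. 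One small slip: \eqref{b42} comes from the cut-off representation \eqref{b24} of Lemma~2.5, not from the $a_k(t)$-anchored identity \eqref{bz39}--\eqref{bz45}, but since your plan redoes both representations with the nearby reference point this does not affect the argument.
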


To deduce the desired bounds on $\theta(t,x)$, we first deduce a bound on $\|v_{x}(t)\|$ in terms of $\|\theta\|_{\infty}$, which will be used later on. For this purpose, due to
\begin{equation*}
\frac{u_tv_{x}}{v}=\left(\frac{uv_{x}}{v}\right)_{t}-\left(\frac{uu_{x}}{v}\right)_{x}+\frac{u^{2}_{x}}{v}
\end{equation*}
and noticing
\begin{equation*}
\mu\left(\frac{v_x}{v}\right)_t=u_t+p(v,\theta)_x,
\end{equation*}
we can get by multiplying the last equation by $\frac{v_{x}}{v}$ and integrating the resulting identity with respect to $t$ and $x$ over $\left(0,t\right)\times\mathbb{R}$ that
\begin{eqnarray}\label{b45}
&&\frac{\mu}{2}\int_{\mathbb{R}}\frac{v_{x}^{2}}{v^{2}}dx+R\int_{0}^{t}\int_{\mathbb{R}}\frac{\theta v_{x}^{2}}{v^{3}}dxds\nonumber\\
&=&\int_\mathbb{R}\left(\frac{\mu}{2}\frac{v_{0x}^2}{v_0^2}-\frac{u_0v_{0x}}{v_0}\right)dx+\underbrace{\int_\mathbb{R}\frac{uv_x}{v}dx}_{I_1}
+\underbrace{\int^t_0\int_\mathbb{R}\frac{u_x^2}{v}dxds}_{I_2}\\
&&+R\underbrace{\int^t_0\int_\mathbb{R}\frac{v_x\theta_x}{v^2}dxds}_{I_3}
+\frac{4a}{3}\underbrace{\int^t_0\int_\mathbb{R}\frac{\theta^3v_x\theta_x}{v}dxds}_{I_4}.\nonumber
\end{eqnarray}

Now we deal with $I_k (1\leq k\leq 4)$ term by term. To this end, we first get by employing Cauchy's inequality, Sobolev's inequality and Lemma 2.1 that
\begin{eqnarray}\label{b46}
I_{1}&&\leq\epsilon\int_{\mathbb{R}} \frac{v_{x}^{2}}{v^{2}}dx
+C\left(\epsilon\right)\int_{\mathbb{R}}u^{2}dx\\
&&\leq\epsilon\int_{\mathbb{R}} \frac{v_{x}^{2}}{v^{2}}dx
+C\left(\epsilon\right),\nonumber
\end{eqnarray}

\begin{eqnarray}\label{b47}
I_{2}&&\leq\epsilon\int_{0}^{t}\int_{\mathbb{R}}\frac{\theta v_{x}^{2}}{v^{3}}dxds
+C\left(\epsilon\right)\int_{0}^{t}\int_{\mathbb{R}}\frac{\kappa(v,\theta)\theta^{2}_{x}}{v\theta^{2}}\cdot\frac{\theta}{\kappa(v,\theta)}dxds\nonumber\\
&&\leq\epsilon\int_{0}^{t}\int_{\mathbb{R}}\frac{\theta v_{x}^{2}}{v^{3}}dxds
+C\left(\epsilon\right)
\end{eqnarray}
and
\begin{equation}\label{b48}
I_{3}=\left|\int_{0}^{t}\int_{\mathbb{R}}\frac{u^{2}_{x}}{v\theta}\cdot\theta dxds\right|\leq C\|\theta\|_{\infty}.
\end{equation}

As to $I_4$, due to
\begin{eqnarray}\label{b49}
I_{4}&&\leq\epsilon\int_{0}^{t}\int_{\mathbb{R}}\frac{\theta v_{x}^{2}}{v^{3}}dxds
+C\left(\epsilon\right)\int_{0}^{t}\int_{\mathbb{R}}\frac{\kappa(v,\theta)\theta^{2}_{x}} {v\theta^{2}}\cdot\frac{\theta^{7}}{\kappa(v,\theta)}dxds\nonumber\\
&&\leq\epsilon\int_{0}^{t}\int_{\mathbb{R}}\frac{\theta v_{x}^{2}}{v^{3}}dxds
+C\left(\epsilon\right)\int_{0}^{t}\int_{\mathbb{R}}\frac{\kappa(v,\theta)\theta^{2}_{x}}{v\theta^{2}}\cdot\frac{\theta^{7}}{1+\theta^{b}}dxds,
\end{eqnarray}
one can easily deduce that
\begin{equation}\label{b50}
I_{4}\leq\epsilon\int_{0}^{t}\int_{\mathbb{R}}\frac{\theta v_{x}^{2}}{v^{3}}dxds
+C\left(\epsilon\right)
\end{equation}
if $b\geq 7$, while if $b<7$, one can deduce that
\begin{equation}\label{b52}
I_{4}\leq\epsilon\int_{0}^{t}\int_{\mathbb{R}}\frac{\theta v_{x}^{2}}{v^{3}}dxds
+C\left(\epsilon\right)\|\theta\|^{7-b}_{\infty}.
\end{equation}
Thus one gets from \eqref{b50} and \eqref{b52} that
\begin{equation}\label{b54}
I_{4}\leq\epsilon\int_{0}^{t}\int_{\mathbb{R}}\frac{\theta v_{x}^{2}}{v^{3}}dxds
+C\left(\epsilon\right)\|\theta\|^{(7-b)_{+}}_{\infty}.
\end{equation}
Here $(7-b)_{+}:=\max\{0, 7-b\}$.

Inserting the above estimates on $I_k (k=1,2,3,4)$ into \eqref{b45}, we can get from the estimate \eqref{b43} obtained in Lemma 2.6 that
\begin{lemma} Under the assumptions listed in Lemma 2.1, we have for any $0\leq t\leq T$ that
\begin{equation}\label{bz57}
\left\|v_{x}(t)\right\|^2+\int_{0}^{t}\left\|\sqrt{\theta(s)} v_{x}(s)\right\|^2 ds\leq C+C\|\theta\|^{\max\{1,(7-b)_{+}\}}_{\infty}.
\end{equation}
\end{lemma}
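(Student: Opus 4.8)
The substantive computation has essentially been assembled above, so the proof reduces to collecting terms. The starting point is the energy identity \eqref{b45}, which one obtains by rewriting $(\ref{a1})_{2}$ as $\mu(v_x/v)_t=u_t+p(v,\theta)_x$, multiplying by $v_x/v$, using the algebraic identity $u_tv_x/v=(uv_x/v)_t-(uu_x/v)_x+u_x^2/v$, and integrating over $(0,t)\times\mathbb{R}$; this places the two good quantities $\frac{\mu}{2}\int_\mathbb{R}(v_x/v)^2\,dx$ and $R\int_0^t\int_\mathbb{R}\theta v_x^2/v^3\,dxds$ on the left, against the (bounded) initial-data term and the four pieces $I_1,\dots,I_4$ on the right.

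First I would substitute the bounds \eqref{b46}, \eqref{b47}, \eqref{b48} and \eqref{b54} for $I_1,\dots,I_4$ into \eqref{b45}. On the right-hand side this leaves the ``bad'' terms $\epsilon\int_\mathbb{R}(v_x/v)^2\,dx$ coming from $I_1$ and $\epsilon\int_0^t\int_\mathbb{R}\theta v_x^2/v^3\,dxds$ coming from $I_2$ and $I_4$, together with a remainder of size $C(\epsilon)\big(1+\|\theta\|_\infty+\|\theta\|_\infty^{(7-b)_+}\big)$. Choosing $\epsilon$ small, for instance $\epsilon=\min\{\mu,R\}/4$, absorbs these bad terms into the left-hand side of \eqref{b45}, so that
\[ \int_\mathbb{R}\frac{v_x^2}{v^2}\,dx+\int_0^t\int_\mathbb{R}\frac{\theta v_x^2}{v^3}\,dxds\le C+C\|\theta\|_\infty+C\|\theta\|_\infty^{(7-b)_+}. \]

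Next I would invoke Lemma 2.6, i.e. the bound \eqref{b43}: since $0<\underline{V}\le v\le\overline{V}$, we have $\|v_x(t)\|^2\le\overline{V}^2\int_\mathbb{R}v_x^2/v^2\,dx$ and $\int_0^t\|\sqrt{\theta(s)}\,v_x(s)\|^2\,ds\le\overline{V}^3\int_0^t\int_\mathbb{R}\theta v_x^2/v^3\,dxds$, so the left-hand side of \eqref{bz57} is controlled by the left-hand side of the displayed inequality above. Finally, the elementary inequality $y^a\le1+y^{\max\{1,a\}}$, valid for all $y\ge0$ and $a\ge0$, converts $C\|\theta\|_\infty+C\|\theta\|_\infty^{(7-b)_+}$ into $C+C\|\theta\|_\infty^{\max\{1,(7-b)_+\}}$, which is exactly \eqref{bz57}.

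The genuinely delicate step here is not this absorption but the estimate \eqref{b54} for $I_4=\frac{4a}{3}\int_0^t\int_\mathbb{R}\theta^3v_x\theta_x/v\,dxds$, the contribution of the radiative pressure, which has already been carried out in \eqref{b49}--\eqref{b52}: one applies Cauchy's inequality to peel off a small multiple of $\theta v_x^2/v^3$ and, for the remaining factor, rewrites it as a constant multiple of $\frac{\kappa(v,\theta)\theta_x^2}{v\theta^2}\cdot\frac{\theta^7}{\kappa(v,\theta)}$, uses the form \eqref{a4} of $\kappa$ so that $\theta^7/\kappa(v,\theta)\le C$ when $b\ge7$ and $\le C\theta^{7-b}$ when $b<7$, and then controls the resulting $\theta$-integral by the entropy dissipation estimate of Lemma 2.1. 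The power $(7-b)_+$ of $\|\theta\|_\infty$ appearing in \eqref{bz57} is precisely the output of this dichotomy, and it is the quantity that will subsequently have to be closed against the upper bound on $\theta$.
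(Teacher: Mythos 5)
Your proposal is correct and mirrors the paper's own proof: plug the estimates for $I_1,\dots,I_4$ into \eqref{b45}, absorb the small-$\epsilon$ terms into the left-hand side, use the uniform bounds on $v$ from Lemma~2.6 to pass to $\|v_x\|^2$ and $\int_0^t\|\sqrt{\theta}v_x\|^2$, and combine the $\|\theta\|_\infty$ and $\|\theta\|_\infty^{(7-b)_+}$ contributions via Young's inequality into the single exponent $\max\{1,(7-b)_+\}$. (Only a cosmetic remark: in the paper's text the displays labeled $I_2$ and $I_3$ have their roles transposed — \eqref{b47} is really the Cauchy estimate of $I_3=\int v_x\theta_x/v^2$ and \eqref{b48} the trivial bound of $I_2=\int u_x^2/v$ — but this does not affect the substance of your argument.)
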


Now we turn to derive an estimate on the upper bound on $\theta\left(t,x\right)$. For this purpose, we set
\begin{eqnarray}\label{b56}
X(t):&=&\int_{0}^{t}\int_{\mathbb{R}}\left(1+\theta^{b+3}(s,x)\right)\theta_{t}^{2}(s,x)dxds,\nonumber\\ Y(t):&=&\max\limits_{s\in(0,t)}\int_{\mathbb{R}}\left(1+\theta^{2b}(s,x)\right)\theta_{x}^{2}(s,x)dx,\\
Z(t):&=&\max\limits_{s\in(0,t)}\int_{\mathbb{R}}u_{xx}^{2}(s,x)dx,\nonumber\\
W(t):&=&\int_{0}^{t}\int_{\mathbb{R}}u^{2}_{xt}(s,x)dxds\nonumber
\end{eqnarray}
and then try to deduce certain estimates between them by employing the structure of the system \eqref{a1}, \eqref{a2}, \eqref{a3} and \eqref{a4} under our consideration.

Firstly, for each $x\in\mathbb{R}$, there exists an integer $k\in\textbf{Z}$ such that $x\in \left[-k-1, k+1\right]$ and we can assume without loss of generality that $x\geq b_{k}\left(t\right)$. Observe first that
\begin{eqnarray}\label{b57}
 \left(\theta(t,x)-1\right)^{2b+6}&&=\left(\theta\left(t,b_{k}(t)\right)-1\right)^{2b+6}
 +\int_{b_{k}\left(t\right)}^{x}\left(2b+6\right)\left(\theta(t,y)-1\right)^{2b+5}\theta_{x}(t,y)dy\nonumber\\
 &&\leq C+C\int_{\mathbb{R}}\left|\theta(t,x)-1\right|^{2b+5}|\theta_{x}(t,x)|dx \\
 &&\leq C+C\|\theta(t)-1\|_{L^{\infty}\left(\mathbb{R}\right)}^{b+3}\left(\int_{\mathbb{R}}\left(\theta(t,x)-1\right)^{4} dx\right)^{\frac{1}{2}}\left(\int_{\mathbb{R}}\left(\theta(t,x)-1\right)^{2b}\theta_{x}^{2}(t,x)dx\right)^{\frac{1}{2}} \nonumber\\
 &&\leq C+C\|\theta(t)-1\|^{b+3}_{L^{\infty}\left(\mathbb{R}\right)}Y^{\frac{1}{2}}(t),\nonumber
\end{eqnarray}
which implies
\begin{equation}\label{b58}
\|\theta(t)\|_{L^{\infty}\left(\mathbb{R}\right)} \leq C+CY(t)^{\frac{1}{2b+6}},
\end{equation}
where we have used the fact that
\begin{equation}\label{bz58}
\left(\theta-1\right)^{4}\leq \left(\theta-1\right)^{2}\left(3\theta^{2}+2\theta+1\right),\quad\left(\theta-1\right)^{2b}\leq C\left(1+\theta^{2b}\right).
\end{equation}

Secondly, by the Gagliardo-Nirenberg inequality, we infer that
\begin{equation*}
\|u_{x}(t)\|\leq C\|u(t)\|^{\frac{1}{2}}\|u_{xx}(t)\|^{\frac{1}{2}}\leq C\|u_{xx}(t)\|^{\frac{1}{2}},
\end{equation*}
which implies that
\begin{equation}\label{b60}
\max\limits_{s\in(0,t)}\int_{\mathbb{R}}u_{x}^{2}(s,x)dx\leq C+CZ(t)^{\frac{1}{2}}.
\end{equation}

Furthermore, by the Sobolev inequality, we can get that
\begin{eqnarray}\label{b61}
\|u_{x}(t)\|_{L^{\infty}\left(\mathbb{R}\right)}&&\leq C\|u_{x}(t)\|^{\frac{1}{2}}\|u_{xx}(t)\|^{\frac{1}{2}}\nonumber\\
&&\leq C\left(1+Z(t)^{\frac{1}{8}}\right)Z(t)^{\frac{1}{4}}\\
&&\leq C+CZ(t)^{\frac{3}{8}}.\nonumber
  \end{eqnarray}

With the above preparations in hand, our next result is to show that $X(t)$ and $Y(t)$ can be controlled by $Z(t)$ and $W(t)$.
\begin{lemma} Under the assumptions listed in Lemma 2.1, we have for $0\leq t\leq T$ that
\begin{equation}\label{b62}
X(t)+Y(t)\leq C\left(1+Z(t)^{\lambda_{1}}\right)+\epsilon W(t).
\end{equation}
Here $\epsilon>0$ canbe chosen as small as we wanted and $\lambda_1$ is given by
\begin{equation}\label{bz136}
\lambda_{1}=\max\left\{\frac{b+3}{b+5}, \frac{3(b+3)}{8(b+2)}, \frac{1}{2},\frac{3(b+3)}{2(3b+9-2\max\{1,\; (7-b)_{+}\})}\right\}.
\end{equation}
It is easy to see that $\lambda_1\in(0,1)$ when $b>\frac{19}{7}$.
\end{lemma}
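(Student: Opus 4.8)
The plan is to estimate the four quantities $X(t)$, $Y(t)$, $Z(t)$, $W(t)$ against each other by testing the energy equation $\eqref{a1}_3$ and the momentum equation $\eqref{a1}_2$ with suitable multipliers, then to close the estimate on $X(t)+Y(t)$ using the elementary interpolation bounds $\eqref{b58}$, $\eqref{b60}$, $\eqref{b61}$ together with the control of $\|v_x(t)\|$ from Lemma 2.7. First I would rewrite the energy equation in terms of $\theta$ alone. Using $e=C_v\theta+av\theta^4$ and $\eqref{a1}_1$, the third equation of $\eqref{a1}$ becomes
\begin{equation*}
\left(C_v+4av\theta^3\right)\theta_t=\left(\frac{\kappa(v,\theta)\theta_x}{v}\right)_x+\frac{\mu u_x^2}{v}-\theta p_\theta u_x+\lambda\phi z,
\end{equation*}
where $p_\theta=R/v+\tfrac{4}{3}a\theta^3$. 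Multiplying this identity by $\theta_t$ and integrating over $(0,t)\times\mathbb{R}$ produces on the left the term $\int_0^t\int_{\mathbb{R}}(C_v+4av\theta^3)\theta_t^2$, which up to the known lower and upper bounds $\underline V\le v\le\overline V$ from Lemma 2.6 controls $X(t)$; the principal term on the right, after integration by parts in $x$, yields $-\tfrac12\frac{d}{dt}\int_{\mathbb{R}}\frac{\kappa(v,\theta)\theta_x^2}{v}$ plus commutator terms involving $\kappa_t$, $v_t=u_x$, and $\kappa_\theta\theta_t$. Since $\kappa(v,\theta)=\kappa_1+\kappa_2 v\theta^b$, the leading part of $\int_{\mathbb{R}}\frac{\kappa(v,\theta)\theta_x^2}{v}$ is comparable to $\int_{\mathbb{R}}(1+\theta^b)\theta_x^2$, which is weaker than $Y(t)=\max_s\int_{\mathbb{R}}(1+\theta^{2b})\theta_x^2$; the upgrade from the $\theta^b$-weight to the $\theta^{2b}$-weight is obtained by instead multiplying the $\theta$-equation by $(1+\theta^{2b})\theta_t$ or equivalently by testing with $\left(\int_1^\theta(1+s^b)^2\,ds\right)_t$-type multipliers so that the resulting ``good'' term is $\frac{d}{dt}\int_{\mathbb{R}}\frac{(1+\theta^b)^2\theta_x^2}{v}$ up to lower-order pieces.

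The remaining terms on the right-hand side must all be absorbed into $\epsilon X(t)+\epsilon W(t)$ plus $C(1+Z(t)^{\lambda_1})$. The term $\int_0^t\int_{\mathbb{R}}\frac{\mu u_x^2}{v}(1+\theta^{2b})\theta_t$ is handled by Cauchy's inequality: $\epsilon\int_0^t\int(1+\theta^{b+3})\theta_t^2$ absorbs into $X(t)$, and the complementary factor $\int_0^t\int\frac{u_x^4(1+\theta^{2b})}{1+\theta^{b+3}}$ is bounded using $\|u_x\|_{L^\infty}\le C+CZ(t)^{3/8}$ from $\eqref{b61}$, $\int_0^t\|u_x\|^2<\infty$ from Lemma 2.1, and $\theta^{2b}/\theta^{b+3}=\theta^{b-3}$ together with $\eqref{b58}$ to produce a power of $Z(t)$ — this is where the exponent $\tfrac{3(b+3)}{8(b+2)}$ (and, when $b<7$, the more delicate $\tfrac{3(b+3)}{2(3b+9-2\max\{1,(7-b)_+\})}$ coming from combining $\eqref{b61}$ with Lemma 2.7) enters $\lambda_1$. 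The term $\int_0^t\int\theta p_\theta u_x(1+\theta^{2b})\theta_t$ is treated the same way. The commutator terms involving $u_x\theta_x^2\theta^{b}$ require $\|u_x\|_{L^\infty}$ and a Gagliardo–Nirenberg bound on $\|\theta_x\|_{L^\infty}$ in terms of $Y(t)$ and $X(t)$; these again produce fractional powers of $Z(t)$ (the exponent $\tfrac{b+3}{b+5}$) and an $\epsilon Y(t)$ that is absorbed on the left. The reaction term $\int_0^t\int\lambda\phi z(1+\theta^{2b})\theta_t$ uses $0\le z\le 1$ from Lemma 2.2, $\phi(\theta)\le C\theta^\beta$, the constraint $\beta<b+9$, and $\eqref{b3}$ to bound $\int_0^t\int\phi z\,\theta\le C$; by Cauchy's inequality this contributes $\epsilon X(t)+C\int_0^t\int\phi z\,\theta^{3b-3}\le\epsilon X(t)+C\|\theta\|_\infty^{3b-3-\beta-1}\cdot(\text{bdd})$, which feeds into the $\tfrac12$ and other entries of $\lambda_1$ via $\eqref{b58}$ once more.

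The main obstacle is the bookkeeping of exponents: one must verify that every nonlinear term can be written as $\epsilon(X(t)+Y(t)+W(t))+C\,\|\theta\|_\infty^{\gamma}\cdot(\text{something finite})$ with $\gamma$ small enough that, after inserting $\|\theta\|_\infty\le C+CY(t)^{1/(2b+6)}$ from $\eqref{b58}$ and then $Y(t)^{1/(2b+6)}\le(\epsilon X(t)+Y(t)+\cdots)^{1/(2b+6)}$-type absorption, one is left with a clean power of $Z(t)$ strictly below $1$. Concretely, after collecting all contributions one arrives at $X(t)+Y(t)\le C+C\|\theta\|_\infty^{\,r}+\epsilon W(t)$ for an explicit $r=r(b)$ in each regime ($b\ge 7$ versus $b<7$, using Lemma 2.7 with $m=\tfrac12,2$), and then $\|\theta\|_\infty^r\le C+CY(t)^{r/(2b+6)}$ with $r/(2b+6)<1$ lets Young's inequality absorb $\tfrac12 Y(t)$ to the left; tracing $r$ back through $\eqref{b61}$ and Lemma 2.7 gives exactly the four candidates in $\eqref{bz136}$, and taking the maximum yields $\lambda_1$. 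Finally $\lambda_1<1$ reduces to four elementary inequalities in $b$, the binding one being $\tfrac{3(b+3)}{8(b+2)}<1\iff b>\tfrac{19}{7}$, which is ensured by the hypothesis $b>\tfrac{11}{3}$ of Theorem \ref{Th1.1}.
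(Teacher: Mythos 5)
Your high-level strategy --- multiply the temperature equation by a $\theta$-weighted version of $\theta_t$, integrate, and absorb remainders into $\epsilon(X+Y+W)+CZ^{\lambda_1}$ via \eqref{b58}, \eqref{b60}, \eqref{b61} and Lemma~2.7 --- matches the paper in spirit, but your choice of multiplier is genuinely different and, as written, does not produce the paper's quantities cleanly. You propose testing with $(1+\theta^{2b})\theta_t$ to ``upgrade'' the Dirichlet weight from $\theta^b$ to $\theta^{2b}$. That would give an $X$-like term with weight $\sim 1+\theta^{2b+3}$, not the weight $1+\theta^{b+3}$ that defines $X(t)$ in \eqref{b56}, and the diffusion term $\int(1+\theta^{2b})\theta_t\big(\tfrac{\kappa\theta_x}{v}\big)_x$ does not integrate to a clean time derivative. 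The paper's multiplier is $K_t(v,\theta)$ with $K=\int_0^\theta\tfrac{\kappa(v,\xi)}{v}d\xi$; the point of this choice (borrowed from Kawohl and Umehara--Tani) is that $K_t=K_vu_x+\tfrac{\kappa\theta_t}{v}$ produces $e_\theta\tfrac{\kappa}{v}\theta_t^2\sim(1+\theta^{b+3})\theta_t^2\sim X$-density directly, and $K_{xt}$ contains $\big(\tfrac{\kappa\theta_x}{v}\big)_t$ so the diffusion contribution integrates exactly to $\tfrac12\tfrac{d}{dt}\int\big(\tfrac{\kappa\theta_x}{v}\big)^2$, whose density is $\sim(1+\theta^{2b})\theta_x^2\sim Y$-density. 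The quadratic appearance of $\kappa$ in the Dirichlet term is what yields the $\theta^{2b}$ weight automatically, without inflating the weight on $\theta_t^2$. You also under-emphasize the hardest remainder, $I_{10}=\int_0^t\int\tfrac{\kappa\theta_x}{v}\big(\tfrac{\kappa}{v}\big)_vv_x\theta_t$, which is where Lemma~2.7's bound on $\|v_x(t)\|$, the pointwise bound \eqref{b122} for $\big\|\tfrac{\kappa\theta_x}{v}\big\|_{L^\infty}$, and the equation $(\ref{a1})_3$ (to control $\big(\tfrac{\kappa\theta_x}{v}\big)_x$) all combine; this is the source of the last candidate in \eqref{bz136}.

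There is also a concrete arithmetic error at the end. You claim the binding constraint for $\lambda_1<1$ is $\tfrac{3(b+3)}{8(b+2)}<1\iff b>\tfrac{19}{7}$; in fact $\tfrac{3(b+3)}{8(b+2)}<1\iff 5b>-7$, which holds for all $b>0$. The candidate that actually produces the threshold $b>\tfrac{19}{7}$ is the fourth one: for $b<6$ one has $\max\{1,(7-b)_+\}=7-b$, so $\tfrac{3(b+3)}{2(3b+9-2(7-b))}=\tfrac{3(b+3)}{10(b-1)}<1\iff 7b>19$.
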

\begin{proof}
In the same manner as in \cite{Kawohl-JDE-1985, Umehara-Tani-JDE-2007}, if we set
\begin{equation}\label{b63}
K\left(v,\theta\right)=\int_{0}^{\theta}\frac{\kappa\left(v,\xi\right)}{v}d\xi=\frac{\kappa_1\theta}{v}+\frac{\kappa_2\theta^{b+1}}{b+1},
  \end{equation}
then it is easy to verify from the estimate \eqref{b43} obtained in Lemma 2.6 that
\begin{eqnarray}
K_{t}(v,\theta)&=&K_{v}(v,\theta)u_{x}+\frac{\kappa(v,\theta)\theta_{t}}{v},\label{b64}\\
K_{xt}(v,\theta)&=&\left[\frac{\kappa(v,\theta)\theta_{x}}{v}\right]_{t}+K_{v}(v,\theta)u_{xx}+K_{vv}(v,\theta)v_{x}u_{x}
+\left(\frac{\kappa(v,\theta)}{v}\right)_{v}v_{x}\theta_{t},\label{b65}\\
\left|K_{v}(v,\theta)\right|&+&\left|K_{vv}(v,\theta)\right|\leq C\theta.\label{b66}
\end{eqnarray}

Multiplying $(\ref{a1})_{3}$ by $K_{t}$ and integrating the resulting identity over $\left(0,t\right)\times\mathbb{R}$, we arrive at
\begin{eqnarray}\label{b67}
&&\int_{0}^{t}\int_{\mathbb{R}}\left(e_{\theta}(v,\theta)\theta_{t}+\theta p_{\theta}(v,\theta)u_{x}-\frac{\mu u_{x}^{2}}{v}\right)K_{t}(v,\theta)dxd\tau\nonumber\\ &&+\int_{0}^{t}\int_{\mathbb{R}}\frac{\kappa\left(v,\theta\right)}{v}\theta_{x}K_{tx}(v,\theta)dxds\\
&=&\int_{0}^{t}\int_{\mathbb{R}}\lambda\phi zK_{t}(v,\theta)dxds.\nonumber
\end{eqnarray}

Combining (\ref{b63})-(\ref{b67}), we have
\begin{eqnarray}\label{b68}
&&\int_{0}^{t}\int_{\mathbb{R}}\frac{e_{\theta}(v,\theta)\kappa\left(v,\theta\right)\theta_{t}^{2}}{v}dxds
+\int_{0}^{t}\int_{\mathbb{R}}\frac{\kappa\left(v,\theta\right)\theta_{x}}{v}\left(\frac{\kappa\left(v,\theta\right)\theta_{x}}{v}\right)_{t}dxds\nonumber\\
&\leq& C+\underbrace{\left|\int_{0}^{t}\int_{\mathbb{R}}e_{\theta}(v,\theta)\theta_{t}K_{v}(v,\theta)u_{x}dxds\right|}_{I_5}
+\underbrace{\left|\int_{0}^{t}\int_{\mathbb{R}}\theta p_{\theta}(v,\theta)u_{x}K_{v}(v,\theta)u_{x}dxds\right|}_{I_6}\nonumber\\
&&+\underbrace{\left|\int_{0}^{t}\int_{\mathbb{R}}\frac{\theta p_{\theta}(v,\theta)\kappa\left(v,\theta\right)u_{x}\theta_{t}}{v}dxds\right|}_{I_7}
+\underbrace{\left|\int_{0}^{t}\int_{\mathbb{R}}\frac{\mu u_{x}^{2}K_{t}(v,\theta)}{v}dxds\right|}_{I_8}\\
&&+\underbrace{\left|\int_{0}^{t}\int_{\mathbb{R}}\frac{\kappa(v,\theta)}{v}\theta_{x}\left(K_{vv}(v,\theta)v_{x}u_{x} +K_{v}(v,\theta)u_{xx}\right)dxds\right|}_{I_9}\nonumber\\
&&+\underbrace{\left|\int_{0}^{t}\int_{\mathbb{R}}\frac{\kappa(v,\theta)\theta_{x}}{v} \left(\frac{\kappa(v,\theta)}{v}\right)_{v}v_{x}\theta_{t}dxds\right|}_{I_{10}}\nonumber\\
&&+\underbrace{\left|\int_{0}^{t}\int_{\mathbb{R}}\lambda\phi zK_{v}(v,\theta)u_{x}dxds\right|}_{I_{11}}
+\underbrace{\left|\int_{0}^{t}\int_{\mathbb{R}}\frac{\lambda\phi z\kappa\left(v,\theta\right)\theta_{t}}{v}dxds\right| }_{I_{12}}.\nonumber
\end{eqnarray}

We now turn to control $I_k (k=5,6,\cdots, 12)$ term by term. To do so, we first have from \eqref{a3}, \eqref{a4} and \eqref{b43} that
\begin{eqnarray}\label{b69}
\int_{0}^{t}\int_{\mathbb{R}}\frac{e_{\theta}(v,\theta)\kappa\left(v,\theta\right)\theta_{t}^{2}}{v}dxds&&\geq C\int_{0}^{t}\int_{\mathbb{R}}\left(1+\theta^{3}\right)\left(1+\theta^{b}\right)\theta_{t}^{2}dxds \nonumber\\
&&\geq CX(t)
\end{eqnarray}
and
\begin{eqnarray}\label{b70}
&&\int_{0}^{t}\int_{\mathbb{R}}\frac{\kappa\left(v,\theta\right)\theta_{x}}{v}\left(\frac{\kappa\left(v,\theta\right)\theta_{x}}{v}\right)_{t}dxds\nonumber\\
&=&\frac{1}{2}\int_{\mathbb{R}}
\left(\frac{\kappa\left(v,\theta\right)\theta_{x}}{v}\right)^{2}(t,x)dx-\frac{1}{2}\int_{\mathbb{R}}\left(\frac{\kappa\left(v,\theta\right)\theta_{x}}{v}\right)^{2}\left(0,x\right)dx\\
&\geq& CY(t)-C.\nonumber
\end{eqnarray}

With the above two estimates in hand, $I_k (k=5,6, 7, 8)$ can be estimated term by term by employing Cauchy's inequality, H\"{o}lder's inequality and Young's inequality as follows:
\begin{eqnarray}\label{b105}
|I_{5}|&&\leq C\int_{0}^{t}\int_{\mathbb{R}} \left(1+\theta\right)^{4}\left|\theta_{t}u_{x}\right|dxds \nonumber\\
&&\leq \epsilon X(t) +C\left(\epsilon\right)\left(1+\|\theta\|_{\infty}^{(6-b)_{+}}\right)\int_{0}^{t}\int_{\mathbb{R}}\frac{u_{x}^{2}}{\theta}dxds\\
&&\leq \epsilon X(t)+C\left(\epsilon\right)\left(1+Y(t)^{\frac{(6-b)_{+}}{2b+6}}\right)\nonumber\\
&&\leq \epsilon (X(t)+Y(t))+C\left(\epsilon\right),\nonumber
\end{eqnarray}
\begin{eqnarray}\label{b106}
|I_{6}|&&\leq C\int_{0}^{t}\int_{\mathbb{R}}\left(1+\theta\right)^{5}u_{x}^{2}dxds\nonumber\\
&&\leq C\int_{0}^{t}\int_{\mathbb{R}}\bigg(\frac{u_{x}^{2}}{\theta}\cdot\theta+\frac{u_{x}^{2}}{\theta}\cdot\theta^{6}\bigg)dxds\\
&&\leq C+C\|\theta\|^{6}_{\infty}\nonumber\\
&&\leq C\left(\epsilon\right)+\epsilon Y(t),\nonumber
\end{eqnarray}
where we have used (\ref{b3}) and (\ref{b58}),
\begin{eqnarray}\label{b107}
|I_{7}|&&\leq C\int_{0}^{t}\int_{\mathbb{R}}\left(1+\theta\right)^{b+4}\left|u_{x}\theta_{t}\right|dxds\nonumber\\
&&\leq \epsilon X(t)+C\left(\epsilon\right) \left(1+\|\theta\|^{b+6}_{\infty}\right)\int_{0}^{t}\int_{\mathbb{R}}\frac{u_{x}^{2}}{\theta}dxds\\
&&\leq \epsilon X(t)+CY(t)^{\frac{b+6}{2b+6}} \nonumber\\
&&\leq \epsilon\left(X(t)+Y(t)\right)+C\left(\epsilon\right),\nonumber
\end{eqnarray}
and
\begin{eqnarray}\label{b108}
|I_{8}|&=&\left|\left.\int_{\mathbb{R}}\frac{\mu u_{x}^{2}K(v,\theta)}{v}dx\right|_{s=0}^{s=t}-\int_{0}^{t}\int_{\mathbb{R}}\left(\frac{\mu u_{x}^{2}}{v}\right)_{t}K(v,\theta)dxds\right|\nonumber\\
&\leq&C+\left|\int_{\mathbb{R}}\frac{\mu u_{x}^{2}K(v,\theta)}{v}dx\right| +\left|\int_{0}^{t}\int_{\mathbb{R}}\left(\frac{2\mu u_{x}u_{xt}}{v}-\frac{\mu u^{3}_{x}}{v^{2}}\right)K(v,\theta)dxds\right|\\
&\leq&\epsilon(Y(t)+W(t))+C\left(\epsilon\right)\left(1+Z(t)^{\frac{b+3}{b+5}}\right).\nonumber
\end{eqnarray}
Here we have the following two estimates:
\begin{eqnarray}\label{b109}
\left|\int_{\mathbb{R}}\frac{\mu u_{x}^{2}K(v,\theta)}{v}dx\right|&&\leq C\left(1+\left\|\theta\right\|_\infty^{b+1}\right)\max\limits_{0\leq s\leq t}\|u_{x}(s)\|^{2}\nonumber\\
&&\leq C\left(1+Y(t)^{\frac{b+1}{2b+6}}\right)\left(1+Z(t)^{\frac{1}{2}}\right)\\
&&\leq \epsilon Y(t)+C\left(\epsilon\right)Z(t)^{\frac{b+3}{b+5}}\nonumber
\end{eqnarray}
and
\begin{eqnarray}\label{b110}
&&\left|\int_{0}^{t}\int_{\mathbb{R}}\left(\frac{2\mu u_{x}u_{xt}}{v}-\frac{\mu u^{3}_{x}}{v^{2}}\right)K(v,\theta)dxds\right|\nonumber\\
&\leq& C\int_{0}^{t}\int_{\mathbb{R}}(1+\theta)^{b+1}\left(|u_{xt}u_{x}|+|u_{x}^{3}|\right)dxds\nonumber\\
&\leq& C\left(1+\|\theta\|^{\frac{2b+3}{2}}_{\infty}\right)\left(\int_{0}^{t}\|u_{xt}(s)\|^{2}ds\right)^{\frac{1}{2}}
\left(\int_{0}^{t}\int_{\mathbb{R}}\frac{u^{2}_{x}}{\theta}dxds\right)^{\frac{1}{2}}\\
&&+C\left(1+\|\theta\|^{b+2}_{\infty}\right)\|u_{x}\|_{\infty}\int_{0}^{t}\int_{\mathbb{R}}\frac{u^{2}_{x}}{\theta}dxds\nonumber\\
&\leq& C\left(1+Y(t)^{\frac{2b+3}{4b+12}}\right)W(t)^{\frac{1}{2}} +C\left(1+Y(t)^{\frac{b+2}{2b+6}}\right)\left(1+Z(t)^{\frac{3}{8}}\right)\nonumber\\
&\leq& \epsilon(Y(t)+W(t))+C\left(\epsilon\right)\left(1+Z(t)^{\frac{3b+9}{4b+16}}\right).\nonumber
\end{eqnarray}
Here we have used the following facts
\begin{eqnarray*}
Y(t)^{\frac{b+1}{2b+6}}Z(t)^{\frac{1}{2}}&&\leq\epsilon Y(t)+C\left(\epsilon\right)Z(t)^{\frac{b+3}{b+5}},\\
Y(t)^{\frac{b+2}{2b+6}}Z(t)^{\frac{3}{8}}&&\leq\epsilon Y(t)+C\left(\epsilon\right)Z(t)^{\frac{3b+9}{4b+16}},\\
\left(1+Y(t)^{\frac{2b+3}{4b+12}}\right)W(t)^{\frac{1}{2}}&&\leq\epsilon W(t)+C\left(\epsilon\right)\left(1+Y(t)^{\frac{2b+3}{2b+6}}\right)\\
&&\leq\epsilon(Y(t)+W(t))+C\left(\epsilon\right).
\end{eqnarray*}

For $I_9$, noticing that
\begin{eqnarray}\label{b112}
&&\int_{0}^{t}\int_{\mathbb{R}}\frac{\kappa(v,\theta)}{v}\theta_{x}K_{v}(v,\theta)u_{xx}dxds\nonumber\\
&=&-\int_{0}^{t}\int_{\mathbb{R}}\left(\frac{\kappa(v,\theta)\theta_{x}}{v}\right)_{x}K_{v}(v,\theta)u_{x}dxds\\
&&-\int_{0}^{t}\int_{\mathbb{R}}\frac{\kappa(v,\theta)}{v}\theta_{x}u_{x}\left(K_{vv}(v,\theta)v_{x} +\left(\frac{\kappa(v,\theta)}{v}\right)_{v}\theta_{x}\right)dxds,\nonumber
\end{eqnarray}
thus
\begin{eqnarray}\label{b113}
I_{9}&\leq&\underbrace{\left|\int_{0}^{t}\int_{\mathbb{R}}\frac{\kappa(v,\theta)}{v}\theta_{x}^{2} \left(\frac{\kappa(v,\theta)}{v}\right)_{v}u_{x}dxds\right|}_{I^1_9}\nonumber\\
&&+\underbrace{\left|\int_{0}^{t}\int_{\mathbb{R}}\left(\frac{\kappa(v,\theta) \theta_{x}}{v}\right)_{x}K_{v}(v,\theta)u_{x}dxds\right|}_{I_9^2}.
\end{eqnarray}

It is easy to see that $I_9^1$ can be estimated as
\begin{eqnarray}\label{b114}
I_9^1&&\leq C\left(1+\|\theta\|^{2}_{\infty}\right)\|u_{x}\|_{\infty}\int_{0}^{t} \int_{\mathbb{R}}\frac{\kappa(v,\theta)\theta_{x}^{2}}{\theta^{2}}dxds\nonumber\\
&&\leq C\left(1+Y(t)^{\frac{2}{2b+6}}\right)\left(1+Z(t)^{\frac{3}{8}}\right)\\
&&\leq\epsilon Y(t)+C\left(\epsilon\right)\left(1+Z(t)^{\frac{3(b+3)}{8(b+2)}}\right).\nonumber
\end{eqnarray}
As to $I_9^2$, due to
\begin{eqnarray}\label{b115}
I_9^2
&&\leq C\int_{0}^{t}\int_{\mathbb{R}}\theta\left|\left(\frac{\kappa(v,\theta)\theta_{x}}{v}\right)_{x}u_{x}\right|dxds\nonumber\\
&&\leq C\left(\int_{0}^{t}\int_{\mathbb{R}}\frac{u^{2}_{x}}{\theta}dxds\right)^{\frac{1}{2}}
\left(\int_{0}^{t}\int_{\mathbb{R}}(1+\theta)^{3}\left|\left(\frac{\kappa(v,\theta)\theta_{x}}{v}\right)_{x}\right|^{2}dxds\right)^{\frac{1}{2}}\\
&&\leq C\left[\int_{0}^{t}\int_{\mathbb{R}}(1+\theta)^{3}\left(e_{\theta}^{2}(v,\theta)\theta_{t}^{2} +\theta^{2}p_{\theta}^{2}(v,\theta)u_{x}^{2}
+u_{x}^{4}+\phi^{2}z^{2}\right)dxds\right]^{\frac{1}{2}},\nonumber
\end{eqnarray}
and noticing that
\begin{eqnarray}\label{b116}
&&\int_{0}^{t}\int_{\mathbb{R}}(1+\theta)^{3}e_{\theta}^{2}(v,\theta)\theta_{t}^{2}dxds\nonumber\\
&\leq& C\int_{0}^{t}\int_{\mathbb{R}}(1+\theta)^{9}\theta_{t}^{2}dxds\nonumber\\
&\leq& C\left(1+\|\theta\|_{\infty}^{6}\right)X(t)\\
&\leq &C\left(1+Y(t)^{\frac{6}{2b+6}}\right)X(t),\nonumber
\end{eqnarray}
\begin{eqnarray}\label{b117}
&&\int_{0}^{t}\int_{\mathbb{R}}(1+\theta)^{3}\theta^{2}p_{\theta}^{2}(v,\theta)u_{x}^{2}dxds\nonumber\\
&\leq& C\int_{0}^{t}\int_{\mathbb{R}}(1+\theta)^{11}u_{x}^{2}dxds\nonumber\\
&\leq& C\left(1+\|\theta\|_{\infty}^{12}\right)\int_{0}^{t}\int_{\mathbb{R}}\frac{u^{2}_{x}}{\theta}dxds\\
&\leq& C+CY(t)^{\frac{6}{b+3}},\nonumber
\end{eqnarray}
\begin{eqnarray}\label{b118}
&&\int_{0}^{t}\int_{\mathbb{R}}(1+\theta)^{3}u_{x}^{4}dxds\nonumber\\
&\leq&C\left(1+\|\theta\|_{\infty}^{4}\right)\|u_{x}\|^{2}_{\infty}\int_{0}^{t}\int_{\mathbb{R}}\frac{u^{2}_{x}}{\theta}dxds\\
&\leq& C\left(1+Y(t)^{\frac{2}{b+3}}\right)\left(1+Z(t)^{\frac{3}{4}}\right),\nonumber
\end{eqnarray}
and
\begin{eqnarray}\label{bz118}
&&\int_{0}^{t}\int_{\mathbb{R}}(1+\theta)^{3}\phi^{2}z^{2}dxds\nonumber\\
&\leq& C\left(1+\|\theta\|_{\infty}^{\beta+3}\right)\int_{0}^{t}\int_{\mathbb{R}}\phi z^{2}dxds\\
&\leq& C+CY(t)^{\frac{\beta+3}{2b+6}},\nonumber
\end{eqnarray}
we can thus get by combining (\ref{b115})-(\ref{bz118}) that
\begin{eqnarray}\label{b119}
I_9^2&\leq& C\left[1+\left(1+Y(t)^{\frac{6}{2b+6}}\right)X(t)+Y(t)^{\frac{6}{b+3}}+\left(1+Y(t)^{\frac{2}{b+3}}\right) \left(1+Z(t)^{\frac{3}{4}}\right)+Y(t)^{\frac{\beta+3}{2b+6}}\right]^{\frac{1}{2}}\nonumber\\
&\leq& C\left[1+X(t)^{\frac{1}{2}}+X(t)^{\frac{1}{2}}Y(t)^{\frac{3}{2b+6}}+Y(t)^{\frac{3}{b+3}} +Z(t)^{\frac{3}{8}}+Z(t)^{\frac{3}{8}}Y(t)^{\frac{1}{b+3}}+Y(t)^{\frac{\beta+3}{4b+12}}\right]\\
&\leq&\epsilon(X(t)+Y(t))+C\left(\epsilon\right)\left(1+Z(t)^{\frac{3(b+3)}{8(b+2)}}\right).\nonumber
\end{eqnarray}

\eqref{b113} together with \eqref{b114} and \eqref{b119} tell us that
\begin{eqnarray}\label{b120}
I_{9}&&\leq\epsilon(X(t)+Y(t))+C\left(\epsilon\right)\left(1+Z(t)^{\frac{3(b+3)}{8(b+2)}}\right).
\end{eqnarray}

Now for $I_{10}$, we get from \eqref{a4} and the estimate \eqref{b43} obtained in Lemma 2.6 that
\begin{eqnarray}\label{b121}
|I_{10}|&&=\left|\int_{0}^{t}\int_{\mathbb{R}}\frac{\kappa(v,\theta)\theta_{x}}{v}\left(\frac{\kappa(v,\theta)}{v}\right)_{v}v_{x}\theta_{t}dxds\right| \nonumber\\
&&\leq C\int_{0}^{t}\int_{\mathbb{R}}\left(1+\theta^{b}\right)|\theta_{x}v_{x}\theta_{t}|dxds\\
&&\leq\epsilon X(t)+C\left(\epsilon\right)+C\left(\epsilon\right)\int_{0}^{t}
\left\|\frac{\kappa(v,\theta)\theta_{x}}{v}\right\|^{2}_{L^{\infty}\left(\mathbb{R}\right)}\|v_{x}(s)\|^2ds.\nonumber
\end{eqnarray}

For the last term in the right hand side of \eqref{b121}, noticing
\begin{eqnarray}\label{b122}
\max\limits_{x\in\mathbb{R}}\left\{\left(\frac{\kappa(v,\theta)\theta_{x}}{v}\right)^{2}(t,x)\right\}\leq C\int_{\mathbb{R}}\left|\frac{\kappa(v,\theta)\theta_{x}}{v}\right|\left|\left(\frac{\kappa(v,\theta)\theta_{x}}{v}\right)_{x}\right|dx,
\end{eqnarray}
we can get from $\eqref{a1}_{3}$, \eqref{b58}, \eqref{b61} and \eqref{b122} that
\begin{eqnarray}\label{b123}
&&\int_{0}^{t}\left\|\frac{\kappa(v,\theta)\theta_{x}}{v}\right\|^{2}_{L^{\infty}\left(\mathbb{R}\right)}\|v_{x}(s)\|^2ds\nonumber\\
&\leq& C\left(1+\left\|\theta\right\|^{\max\{1,\; (7-b)_{+}\}}_{\infty}\right)\int_{0}^{t}\int_{\mathbb{R}}\left|\frac{\kappa(v,\theta)\theta_{x}}{v}\right| \left|\left(\frac{\kappa(v,\theta)\theta_{x}}{v}\right)_{x}\right|dxds\\
&\leq& C\left(1+\left\|\theta\right\|^{\max\{1,\; (7-b)_{+}\}}_{\infty}\right)\left(\int_{0}^{t}\int_{\mathbb{R}}\frac{\kappa(v,\theta)\theta_{x}^{2}} {v\theta^{2}}dxds\right)^{\frac{1}{2}}
\left(\int_{0}^{t}\int_{\mathbb{R}}\left(1+\theta\right)^{b+2}\left|\left(\frac{\kappa(v,\theta) \theta_{x}}{v}\right)_{x}\right|^{2}dxds\right)^{\frac{1}{2}} \nonumber\\
&\leq& C\left(1+\left\|\theta\right\|^{\max\{1,\; (7-b)_{+}\}}_{\infty}\right)\left(\int_{0}^{t}\int_{\mathbb{R}}\left(1+\theta\right)^{b+2}
\left(e_{\theta}^{2}(v,\theta)\theta_{t}^{2}+\theta^{2}p_{\theta}^{2}(v,\theta)u_{x}^{2}+u_{x}^{4}+\phi^{2}z^{2} \right)dxds\right)^{\frac{1}{2}},\nonumber\\
&\leq& C\left(1+Y(t)^{\frac{\max\{1,\; (7-b)_{+}\}}{2b+6}}\right) \left(\int_{0}^{t}\int_{\mathbb{R}}\left(1+\theta\right)^{b+2}
\left(e_{\theta}^{2}(v,\theta)\theta_{t}^{2}+\theta^{2}p_{\theta}^{2}(v,\theta)u_{x}^{2} +u_{x}^{4}+\phi^{2}z^{2}\right)dxds\right)^{\frac{1}{2}}.\nonumber
\end{eqnarray}

For the last term in the right hand side of \eqref{b123}, one can deduce from $\eqref{a1}_{3}$, \eqref{a2}, \eqref{a3}, \eqref{a4}, \eqref{b58}, \eqref{b61} and \eqref{b122} again that
\begin{eqnarray}\label{b124}
&&\int_{0}^{t}\int_{\mathbb{R}}(1+\theta)^{b+2}e_{\theta}^{2}(v,\theta)\theta_{t}^{2}dxds\nonumber\\
&\leq& C\int_{0}^{t}\int_{\mathbb{R}}(1+\theta)^{b+8}\theta_{t}^{2}dxds\\
&\leq& C\left(1+\|\theta\|_{\infty}^{5}\right)X(t)\nonumber\\
&\leq& C\left(1+Y(t)^{\frac{5}{2b+6}}\right)X(t),\nonumber
\end{eqnarray}
\begin{eqnarray}\label{b125}
&&\int_{0}^{t}\int_{\mathbb{R}}(1+\theta)^{b+2}\theta^{2}p_{\theta}^{2}(v,\theta)u_{x}^{2}dxds\nonumber\\
&\leq& C\int_{0}^{t}\int_{\mathbb{R}}(1+\theta)^{b+10}u_{x}^{2}dxds\\
&\leq& C\left(1+\|\theta\|_{\infty}^{b+11}\right)\int_{0}^{t}\int_{\mathbb{R}}\frac{u^{2}_{x}}{\theta}dxds\nonumber\\
&\leq& C+CY(t)^{\frac{b+11}{2b+6}},\nonumber
\end{eqnarray}
\begin{eqnarray}\label{b126}
&&\int_{0}^{t}\int_{\mathbb{R}}(1+\theta)^{b+2}u_{x}^{4}dxds\nonumber\\
&\leq& C\left(1+\|\theta\|_{\infty}^{b+3}\right)\|u_{x}\|^{2}_{\infty}\int_{0}^{t}\int_{\mathbb{R}}\frac{u^{2}_{x}}{\theta}dxds\\
&\leq& C\left(1+Y(t)^{\frac{1}{2}}\right)\left(1+Z(t)^{\frac{3}{4}}\right),\nonumber
\end{eqnarray}
and
\begin{eqnarray}\label{b127}
&&\int_{0}^{t}\int_{\mathbb{R}}(1+\theta)^{b+2}\phi^{2}z^{2}dxds\nonumber\\
&\leq& C\left(1+\|\theta\|_{\infty}^{b+2+\beta}\right)\int_{0}^{t}\int_{\mathbb{R}}\phi z^{2}dxds\\
&\leq& C+CY(t)^{\frac{b+2+\beta}{2b+6}}.\nonumber
\end{eqnarray}
Consequently, we obtain by combining the estimates (\ref{b123})-(\ref{b127}) that
\begin{eqnarray}\label{b128}
&&\int_{0}^{t}\left\|\frac{\kappa(v,\theta)\theta_{x}}{v}\right\|^{2}_{L^{\infty}\left(\mathbb{R}\right)}\|v_{x}(s)\|^2ds\nonumber\\
&\leq& C\left(1+Y(t)^{\frac{\max\{1,\; (7-b)_{+}\}}{2b+6}}\right)\left[\left(1+Y(t)^{\frac{5}{2b+6}}\right)X(t)\right.\nonumber\\
&&\left.+Y(t)^{\frac{b+11}{2b+6}}+\left(1+Y(t)^{\frac{1}{2}}\right)\left(1+Z(t)^{\frac{3}{4}}\right) +Y(t)^{\frac{b+2+\beta}{2b+6}}\right]^{\frac{1}{2}}\nonumber\\
&\leq& C\left(1+X(t)^{\frac{1}{2}}+X(t)^{\frac{1}{2}}Y(t)^{\frac{5}{4b+12}}+Y(t)^{\frac{b+11}{4b+12}}\right.\nonumber\\
&&+Y(t)^{\frac{1}{4}}+Z(t)^{\frac{3}{8}}+Y(t)^{\frac{1}{4}}Z(t)^{\frac{3}{8}}
+Y(t)^{\frac{b+2+\beta}{4b+12}}+Y(t)^{\frac{\max\{1,\; (7-b)_{+}\}}{2b+6}}\\
&&+X(t)^{\frac{1}{2}}Y(t)^{\frac{\max\{1,\; (7-b)_{+}\}}{2b+6}}+X(t)^{\frac{1}{2}}Y(t)^{\frac{5+2\max\{1,\; (7-b)_{+}\}}{4b+12}}\nonumber\\
&&+Y(t)^{\frac{b+11+2\max\{1,\; (7-b)_{+}\}}{4b+12}}+Y(t)^{\frac{b+3+2\max\{1,\; (7-b)_{+}\}}{4b+12}}+Y(t)^{\frac{\max\{1,\; (7-b)_{+}\}}{2b+6}}Z(t)^{\frac{3}{8}}\nonumber\\
&&\left.+Y(t)^{\frac{b+3+2\max\{1,\; (7-b)_{+}\}}{4b+12}}Z(t)^{\frac{3}{8}}+Y(t)^{\frac{b+\beta+2+2\max\{1,\; (7-b)_{+}\}}{4b+12}}\right)\nonumber\\
&\leq&\epsilon(X(t)+Y(t))+C\left(\epsilon\right)\left(1+Z(t)^{\max\left\{\frac{1}{2}, \frac{3(b+3)}{2(3b+9-2\max\{1,\; (7-b)_{+}\})}\right\}}\right),\nonumber
\end{eqnarray}
where we have used the fact that
\begin{eqnarray*}
Y(t)^{\frac{1}{4}}Z(t)^{\frac{3}{8}}&&\leq\epsilon Y(t)+C\left(\epsilon\right)Z(t)^{\frac{1}{2}},\\
X(t)^{\frac{1}{2}}Y(t)^{\frac{5+2\max\{1,\; (7-b)_{+}\}}{4b+12}}&&\leq\epsilon X(t)+C\left(\epsilon\right)Y(t)^{\frac{5+2\max\{1,\; (7-b)_{+}\}}{2b+6}},\\
&&\leq \epsilon (X(t)+Y(t))+C\left(\epsilon\right),\quad \left(b>\frac{13}{4}\right)\\
Y(t)^{\frac{b+3+2\max\{1,\; (7-b)_{+}\}}{4b+12}}Z(t)^{\frac{3}{8}}&&\leq\epsilon Y(t)+C\left(\epsilon\right)Z(t)^{\frac{3(b+3)}{2(3b+9-2\max\{1,\; (7-b)_{+}\})}},\\
Y(t)^{\frac{b+\beta+2+2\max\{1,\; (7-b)_{+}\}}{4b+12}}&&\leq \epsilon Y(t)+C\left(\epsilon\right).
\end{eqnarray*}

Combining (\ref{b121})-(\ref{b128}), we can get the following estimate on $I_{10}$
\begin{eqnarray}\label{b130}
|I_{10}|&&\leq\epsilon\left(X(t)+Y(t)\right)+C\left(\epsilon\right)\bigg(1+Z(t)^{\max\left\{\frac{1}{2}, \frac{3(b+3)}{2(3b+9-2\max\{1,\; (7-b)_{+}\})}\right\}}\bigg).\nonumber
\end{eqnarray}

Finally, from \eqref{a4}, the estimate \eqref{b43} obtained in Lemma 2.6, the estimates \eqref{b2}, \eqref{b16} and the assumption $0\leq\beta<b+9$, the terms $I_{11}$ and $I_{12}$ can be bounded as follows:
\begin{eqnarray}\label{b135}
|I_{11}|&&=\left|\int_{0}^{t}\int_{\mathbb{R}}\lambda\phi zK_{v}(v,\theta)u_{x}dxds\right| \nonumber\\
&&\leq C\left(1+\|\theta\|_{\infty}^{\frac{\beta+3}{2}}\right)\left(\int_{0}^{t}\int_{\mathbb{R}}\phi z^{2}dxds\right)^{\frac{1}{2}}\left(\int_{0}^{t}\int_{\mathbb{R}}\frac{u^{2}_{x}}{\theta}dxds\right)^{\frac{1}{2}}\\
&&\leq C+CY(t)^{\frac{\beta+3}{4b+12}}\nonumber\\
&&\leq\epsilon Y(t)+C\left(\epsilon\right)\nonumber
\end{eqnarray}
and
\begin{eqnarray}\label{b136}
|I_{12}|&&=\left|\int_{0}^{t}\int_{\mathbb{R}}\frac{\lambda\phi z\kappa\left(v,\theta\right)\theta_{t}}{v}dxds\right| \nonumber\\
&&\leq \epsilon X(t)+C\left(\epsilon\right)\int_{0}^{t}\int_{\mathbb{R}}\left(1+\theta^{b+\beta-3}\right)\phi z^{2}dxds\\
&&\leq \epsilon X(t)+C\left(\epsilon\right)\left(1+\|\theta\|^{b+\beta-3}_{\infty}\right)\int_{0}^{t}\int_{\mathbb{R}}\phi z^{2}dxds\nonumber\\
&&\leq \epsilon (X(t)+Y(t))+C\left(\epsilon\right).\nonumber
\end{eqnarray}

With the above estimates in hand, if we define $\lambda_1$ as in \eqref{bz136},
then combining all the above estimates and by choosing $\epsilon>0$ small enough, we can get the estimate \eqref{b62} immediately. This completes the proof of our lemma.
\end{proof}

Our next result in this section is to show that $Z(t)$ can be bounded by $X(t)$ and $Y(t)$.
\begin{lemma} Under the assumptions listed in Lemma 2.1, we have for all $0\leq t\leq T$ that
\begin{equation}\label{b137}
Z(t)\leq C\left(1+X(t)+Y(t)+Z(t)^{\lambda_{2}}\right).
\end{equation}
Here $\lambda_{2}$ is given by
\begin{equation}\label{b150}
\lambda_{_{2}}=\max\left\{\frac{3b+9}{4b+10}, \frac{3(b+3)}{2(2b+6-\max\{1,\; (7-b)_{+}\})}\right\}.
\end{equation}
It is easy to see that $\lambda_2\in(0,1)$ provided that $b>\frac{11}{3}.$
\end{lemma}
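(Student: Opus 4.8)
The plan is to reduce the bound on $Z(t)$ to $L^2$-in-$x$ bounds, uniform on $[0,t]$, for $u_t$, $p_x$ and $u_xv_x$. Rewriting $(\ref{a1})_{2}$ as $\mu(u_x/v)_x=u_t+p(v,\theta)_x$ and expanding the left-hand side gives the pointwise identity
\[
u_{xx}=\frac{v}{\mu}\bigl(u_t+p(v,\theta)_x\bigr)+\frac{u_xv_x}{v},
\]
so that, by the bounds $\underline{V}\le v\le\overline{V}$ of Lemma 2.6,
\[
\|u_{xx}(s)\|^2\le C\bigl(\|u_t(s)\|^2+\|p_x(s)\|^2+\|u_x(s)v_x(s)\|^2\bigr)\qquad\text{for every }s\in[0,t];
\]
taking the supremum over $s\in(0,t)$ reduces the proof to estimating these three quantities.

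\textbf{The bound on $\sup_{0\le s\le t}\|u_t(s)\|^2$.} I would differentiate $(\ref{a1})_{2}$ in $t$, multiply by $u_t$ and integrate over $(0,t)\times\mathbb{R}$. Since $v_t=u_x$, the viscous term produces the good dissipation $\mu\int_{0}^{t}\int_{\mathbb{R}}u_{xt}^{2}/v\,dxds$ (hence control of $W(t)$) together with $\mu\int_{0}^{t}\int_{\mathbb{R}}u_x^{2}u_{xt}/v^{2}\,dxds$, while the pressure term produces $\int_{0}^{t}\int_{\mathbb{R}}p_tu_{xt}\,dxds$ with $p_t=p_v u_x+p_\theta\theta_t$, $|p_v|\le C\theta$, $|p_\theta|\le C(1+\theta^{3})$. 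Using Cauchy's inequality together with the basic energy estimate \eqref{b3} (which gives $\int_{0}^{t}\int_{\mathbb{R}}u_x^{2}\,dxds\le C\|\theta\|_{\infty}$ and $\int_{0}^{t}\int_{\mathbb{R}}\theta^{2}u_x^{2}\,dxds\le C\|\theta\|_{\infty}^{3}$), the bound $\int_{0}^{t}\int_{\mathbb{R}}(1+\theta^{6})\theta_t^{2}\,dxds\le CX(t)$ (valid since $b>3$), the Sobolev bounds \eqref{b58} and \eqref{b61}, and $\int_{0}^{t}\int_{\mathbb{R}}u_x^{4}\,dxds\le\|u_x\|_{\infty}^{2}\int_{0}^{t}\int_{\mathbb{R}}u_x^{2}\,dxds$, every source term is absorbed after an application of Young's inequality, the borderline one being $Z(t)^{3/4}Y(t)^{1/(2b+6)}\le\epsilon Y(t)+C(\epsilon)Z(t)^{(3b+9)/(4b+10)}$. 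Together with the assumption $u_{0t}\in L^{2}(\mathbb{R})$, this gives
\[
\sup_{0\le s\le t}\|u_t(s)\|^2+W(t)\le C\bigl(1+X(t)+Y(t)+Z(t)^{(3b+9)/(4b+10)}\bigr).
\]

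\textbf{The bounds on $\sup_{0\le s\le t}\|p_x(s)\|^2$ and $\sup_{0\le s\le t}\|u_x(s)v_x(s)\|^2$.} From $p_x=p_v v_x+p_\theta\theta_x$ one has $\|p_x\|^2\le C\|\theta\|_{\infty}^{2}\|v_x\|^{2}+C\int_{\mathbb{R}}(1+\theta^{2b})\theta_x^{2}\,dx\le C\|\theta\|_{\infty}^{2}\|v_x\|^{2}+CY(t)$; inserting the bound $\|v_x(s)\|^{2}\le C+C\|\theta\|_{\infty}^{\max\{1,(7-b)_{+}\}}$ of Lemma 2.7 and \eqref{b58}, the resulting power of $Y(t)$ does not exceed $1$, so $\sup_{0\le s\le t}\|p_x(s)\|^2\le\epsilon Y(t)+C(\epsilon)(1+Y(t))$. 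Next, $\sup_{0\le s\le t}\|u_x(s)v_x(s)\|^2\le\|u_x\|_{\infty}^{2}\sup_{0\le s\le t}\|v_x(s)\|^{2}$, and combining \eqref{b61} with Lemma 2.7 once more, the new borderline term $Z(t)^{3/4}\|\theta\|_{\infty}^{\max\{1,(7-b)_{+}\}}$, after \eqref{b58} and Young's inequality, contributes $\epsilon Y(t)+C(\epsilon)Z(t)^{3(b+3)/(2(2b+6-\max\{1,(7-b)_{+}\}))}$, which is the second exponent defining $\lambda_{2}$ in \eqref{b150}. Collecting the three estimates, using $(3b+9)/(4b+10)\le\lambda_{2}$ so that $Z(t)^{(3b+9)/(4b+10)}\le1+Z(t)^{\lambda_{2}}$, and choosing $\epsilon$ small enough yields \eqref{b137}.

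\textbf{Main obstacle.} The difficulty is the exponent bookkeeping rather than any single estimate: after using \eqref{b58} to trade powers of $\theta$ for powers of $Y(t)$ and Lemma 2.7 to trade $\|v_x\|^{2}$ for $\|\theta\|_{\infty}^{\max\{1,(7-b)_{+}\}}$, each nonlinear term becomes a monomial in $X(t),Y(t),Z(t)$, and one must verify that Young's inequality produces only powers of $Y(t)$ that are $\le1$ and powers of $Z(t)$ that are $<1$ and bounded by $\lambda_{2}$. The worst terms — $\int_{0}^{t}\int_{\mathbb{R}}u_x^{4}$ and $\|u_xv_x\|^{2}$ combined with the bound $\|v_x\|^{2}\lesssim\|\theta\|_{\infty}^{\max\{1,(7-b)_{+}\}}$ — are precisely what force the restriction $b>\tfrac{11}{3}$; for such $b$ one checks directly that $\lambda_{2}\in(0,1)$.
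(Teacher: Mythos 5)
Your proposal follows the same route as the paper's proof: it starts from the pointwise expansion of $(\ref{a1})_2$ giving $u_{xx}=\frac{v}{\mu}\left(u_t+p_x\right)+\frac{u_xv_x}{v}$, obtains the bound on $\|u_t(t)\|^2$ and $W(t)$ by differentiating the momentum equation in $t$, testing against $u_t$ and identifying the borderline term $Y^{1/(2b+6)}Z^{3/4}\le\epsilon Y+C(\epsilon)Z^{(3b+9)/(4b+10)}$, and then handles $\|p_x\|^2$ and $\|u_xv_x\|^2$ by combining Lemma 2.7 with the interpolation bounds \eqref{b58} and \eqref{b61}, exactly reproducing the Young-inequality exponent $\frac{3(b+3)}{2(2b+6-\max\{1,(7-b)_+\})}$. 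This is essentially the paper's decomposition \eqref{b140}--\eqref{b145}, so the proposal is correct and takes the same approach.
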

\begin{proof}
Differentiating $(\ref{a1})_{2}$ with respect to $t$, multiplying it by $u_{t}$, then integrating the resulting equation with respect to $t$ and $x$ over $\left(0,t\right)\times\mathbb{R}$, we have
\begin{equation}\label{b138}
\frac{\|u_{t}(t)\|^{2}}{2}+\int_{0}^{t}\int_{\mathbb{R}}\frac{\mu u_{tx}^{2}}{v}dxds=\int_{\mathbb{R}}\frac{u_{0t}^{2}}{2}dx
+\int_{0}^{t}\int_{\mathbb{R}}\bigg(u_{tx}p_{t}(v,\theta)+\frac{\mu u_{x}^{2}u_{tx}}{v^{2}}\bigg)dxds.
\end{equation}

Since
\begin{equation}\label{bz138}
p_{t}(v,\theta)=\left(\frac{R}{v}+\frac{4}{3}a\theta^{3}\right)\theta_{t}-\frac{R\theta u_{x}}{v^{2}},
\end{equation}
we can deduce from the identity (\ref{bz138}) and the estimate \eqref{b43} obtained in Lemma 2.6 that
\begin{eqnarray}\label{b139}
&&\|u_{t}(t)\|^{2}+\int_{0}^{t}\left\|u_{xt}(s)\right\|^2ds\nonumber\\
&\leq& C\left(1+\int_{0}^{t}\int_{\mathbb{R}}\left( p_{t}^{2}(v,\theta)+u_{x}^{4}\right)dxds\right) \nonumber\\
&\leq& C+C\int_{0}^{t}\int_{\mathbb{R}}\left(\left(1+\theta^{6}\right)\theta_{t}^{2}+\theta^{2}u_{x}^{2}\right)dxds +C\int_{0}^{t}\int_{\mathbb{R}}u^{4}_{x}dxds\\
&\leq& C+CX(t)+C\left(\|\theta\|^{3}_{\infty}+\|\theta u^{2}_{x}\|_{\infty}\right)\int_{0}^{t}\int_{\mathbb{R}}\frac{u^{2}_{x}}{\theta}dxds\nonumber\\
&\leq& C\left(1+X(t)+Y(t)+Z(t)^{\frac{3b+9}{4b+10}}\right),\nonumber
\end{eqnarray}
where we have used (\ref{b3}), (\ref{b58}) and (\ref{b61}) again.

Moreover, we can conclude from $(\ref{a1})_{2}$ that
\begin{equation}\label{b140}
u_{xx}= \frac{v}{\mu}\left[u_{t}+p_{x}(v,\theta)+\frac{\mu u_{x}v_{x}}{v^{2}}\right].
\end{equation}

Noticing
\begin{equation}\label{b141}
 p_{x}(v,\theta)=\left(\frac{R\theta}{v}+\frac{a\theta^{4}}{3}\right)_{x}=\frac{R\theta_{x}}{v}-\frac{R\theta v_{x}}{v^{2}}
 +\frac{4}{3}a\theta^{3}\theta_{x},
\end{equation}
one thus gets by combining (\ref{b140}) and (\ref{b141}) that
\begin{eqnarray}\label{b142}
\|u_{xx}(t)\|^{2}&&\leq C\left(1+\int_{\mathbb{R}}\left( u_{t}^{2}+p_{x}^{2}(v,\theta)+u_{x}^{2}v_{x}^{2}\right)dx\right)\nonumber\\
&&\leq C\left(\|u_{t}(t)\|^{2}+\int_{\mathbb{R}}\left(1+\theta^{6}\right)\theta_{x}^{2}dx +\int_{\mathbb{R}}\left(\theta^{2}+u_{x}^{2}\right)v_{x}^{2}dx\right)\\
&&\leq C\left(1+X(t)+Y(t)+Z(t)^{\frac{3b+9}{4b+10}}+\|\theta\|_{\infty}^{2}\|v_{x}(t)\|^{2} +\|u_{x}\|^{2}_{\infty}\|v_{x}(t)\|^{2}\right).\nonumber
\end{eqnarray}

Noticing that
\begin{eqnarray*}
Y(t)^{\frac{\max\{1,\; (7-b)_{+}\}}{2b+6}}Z(t)^{\frac{3}{4}}\leq CY(t)+CZ(t)^{\frac{3(b+3)}{2(2b+6-\max\{1,\; (7-b)_{+}\})}},
\end{eqnarray*}
the last two terms in the right hand side of \eqref{b142} can be bounded as follows:
\begin{eqnarray}\label{b143}
\|\theta\|_{\infty}^{2}\|v_{x}(t)\|^{2}&&\leq C+C\|\theta\|_{\infty}^{2+\max\{1,\; (7-b)_{+}\}}\nonumber\\
&&\leq C+CY(t)^{\frac{2+\max\{1,\; (7-b)_{+}\}}{2b+6}}\\
&&\leq C+CY(t)\nonumber
\end{eqnarray}
and
\begin{eqnarray}\label{b144}
\|u_{x}\|^{2}_{\infty}\|v_{x}(t)\|^{2}&&\leq C+C\bigg(1+\|\theta\|_{\infty}^{\max\{1,\; (7-b)_{+}\}}\bigg)\bigg(1+Z(t)^{\frac{3}{4}}\bigg)\nonumber\\
&&\leq C\bigg(1+Z(t)^{\frac{3}{4}}+Y(t)^{\frac{\max\{1,\; (7-b)_{+}\}}{2b+6}}+Y(t)^{\frac{\max\{1,\; (7-b)_{+}\}}{2b+6}}Z(t)^{\frac{3}{4}}\bigg)\\
&&\leq C+CY(t)+CZ(t)^{\frac{3(b+3)}{2(2b+6-\max\{1,\; (7-b)_{+}\})}}.\nonumber
\end{eqnarray}

Combining (\ref{b142})-(\ref{b144}), we arrive at
\begin{equation}\label{b145}
\|u_{xx}(t)\|^{2}\leq C\left(1+X(t)+Y(t)+Z(t)^{\lambda_{_{2}}}\right),
\end{equation}
where $\lambda_2$ is given by \eqref{b150}.

Having obtained the estimate \eqref{b145}, we can get the estimate \eqref{b137} by using the definition of $Z(t)$, thus the proof of Lemma 2.9 is complete.
\end{proof}

With the above lemmas in hand, we can deduce the uniform upper bound of $\theta\left(t,x\right)$ now. In fact, we have the following lemma.

\begin{lemma} Under the assumptions listed in Lemma 2.1, there exists a positive constant $\overline{\Theta}$ which depends only on the initial data $(v_0(x), u_0(x), \theta_0(x), z_0(x))$, such that
  \begin{equation}\label{b151}
 \theta\left(t,x\right) \leq\overline{\Theta},\quad \forall
 \left(t,x\right)\in[0,T] \times\mathbb{R}.
  \end{equation}
Moreover, we have for $0\leq t\leq T$ that
\begin{equation}\label{b152}
 \left\|\left(v-1, u, \theta-1, z, v_{x}, u_{t}, \theta_{x}, u_{xx}\right)\left(t\right)\right\|^{2}+\int_{0}^{t}\left\|\left(\sqrt{\theta}v_{x}, u_{x}, \theta_{x}, \theta_{t}, u_{xt}, z_{x}\right)(s)\right\|^{2}ds\leq C
\end{equation}
and
\begin{equation}\label{b152-2}
 \int_{0}^{t}\left\|u_{x}(s)\right\|^4_{L^4(\mathbb{R})}ds\leq C,\quad \|u_{x}\|_{L^\infty([0,T]\times\mathbb{R})}\leq C.
\end{equation}
Recall that the constants $C$ in \eqref{b151}, \eqref{b152} and \eqref{b152-2} depend only on the initial data $(v_0(x),$ $u_0(x), \theta_0(x), z_0(x))$.
\end{lemma}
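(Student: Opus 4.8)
The plan is to turn the three inequalities already at our disposal --- the bound \eqref{b62} of Lemma 2.8 relating $X(t)+Y(t)$ to $Z(t)$ and $W(t)$, the bound \eqref{b137} of Lemma 2.9 relating $Z(t)$ to $X(t)$, $Y(t)$ and $Z(t)^{\lambda_2}$, and the auxiliary estimate \eqref{b139} for $u_t$ --- into a single self-improving inequality for $Z(t)$. First I would read off from \eqref{b139} that
\[
W(t)=\int_0^t\|u_{xt}(s)\|^2\,ds\le C\Big(1+X(t)+Y(t)+Z(t)^{\frac{3b+9}{4b+10}}\Big),
\]
so $W(t)$ is dominated by $X(t)$, $Y(t)$ and a power of $Z(t)$ that is strictly less than $1$ for every $b>0$. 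Substituting this into \eqref{b62} and taking $\epsilon>0$ small enough to absorb both $\epsilon W(t)$ and $\epsilon(X(t)+Y(t))$ into the left-hand side yields
\[
X(t)+Y(t)\le C\big(1+Z(t)^{\lambda_3}\big),\qquad \lambda_3:=\max\Big\{\lambda_1,\tfrac{3b+9}{4b+10}\Big\},
\]
where $\lambda_3\in(0,1)$ since $b>\tfrac{11}{3}$ forces $\lambda_1<1$ (indeed $b>\tfrac{19}{7}$ already suffices). Plugging this back into \eqref{b137} gives $Z(t)\le C\big(1+Z(t)^{\max\{\lambda_2,\lambda_3\}}\big)$, and here $\max\{\lambda_2,\lambda_3\}<1$ is exactly guaranteed by the hypothesis $b>\tfrac{11}{3}$ (which makes $\lambda_2<1$); a single application of Young's inequality then produces a uniform bound $Z(t)\le C$, and consequently $X(t)+Y(t)+W(t)\le C$, all constants depending only on the initial data. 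One uses here that $X,Y,Z,W$ are a priori finite for the solutions under consideration (which one may arrange through the standard approximation scheme), so that the algebraic inequality can legitimately be solved.

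Once $Y(t)\le C$ is known, the upper bound \eqref{b151} follows at once from \eqref{b58}: $\|\theta(t)\|_{L^\infty(\mathbb R)}\le C+CY(t)^{1/(2b+6)}\le C=:\overline\Theta$. The estimates \eqref{b152} and \eqref{b152-2} are then obtained by revisiting the earlier inequalities with $\|\theta\|_\infty$ now bounded: \eqref{bz57} of Lemma 2.7 gives $\|v_x(t)\|^2+\int_0^t\|\sqrt{\theta}\,v_x(s)\|^2\,ds\le C$; \eqref{b139} gives $\|u_t(t)\|^2+\int_0^t\|u_{xt}(s)\|^2\,ds\le C$; \eqref{b145} gives $\|u_{xx}(t)\|^2\le C$; the definitions of $X(t)$ and $Y(t)$ give $\int_0^t\|\theta_t(s)\|^2\,ds\le X(t)\le C$ and $\|\theta_x(t)\|^2\le Y(t)\le C$. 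Because $\underline V\le v\le\overline V$ and $\theta\le\overline\Theta$, the basic energy identity \eqref{b3} controls $\|(v-1,u,\theta-1)(t)\|^2$ (via $v-\ln v-1\gtrsim(v-1)^2$ and $\theta-\ln\theta-1\gtrsim(\theta-1)^2$ on the relevant ranges) together with $\int_0^t\|(u_x,\theta_x)(s)\|^2\,ds$, while \eqref{b2} and Lemma 2.2 control $\|z(t)\|^2$ and $\int_0^t\|z_x(s)\|^2\,ds$. Finally \eqref{b152-2} follows from $\|u_x\|_{L^4}^4\le\|u_x\|_\infty^2\|u_x\|^2$ combined with $\int_0^t\|u_x(s)\|^2\,ds\le C$, and from \eqref{b61} with $Z(t)\le C$, which gives $\|u_x\|_{L^\infty([0,T]\times\mathbb R)}\le C$.

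The only genuine difficulty is the bookkeeping of exponents: one must check that after $W(t)$ is eliminated in favour of $X(t),Y(t),Z(t)$ and fed back through Lemmas 2.8 and 2.9, every surviving power of $Z(t)$ is strictly below $1$, so that the inequality for $Z(t)$ is truly self-improving. This is precisely the point at which the structural restriction $b>\tfrac{11}{3}$ in \eqref{1.9} is used --- through $\lambda_2<1$ in \eqref{b150} --- and it is a more demanding condition than anything required to control the remaining terms. All the other inputs (the estimates already assembled in Lemmas 2.4, 2.6, 2.7, 2.8 and 2.9, together with interpolation and Sobolev bounds such as \eqref{b58}, \eqref{b60}, \eqref{b61}) are, at this stage, routine and need only be combined.
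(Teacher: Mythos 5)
Your proof is correct and follows essentially the same route as the paper: the paper also combines \eqref{b62}, \eqref{b137}, and \eqref{b139} into a self-improving inequality, bounds $W(t)+Z(t)$ directly (using that $\tfrac{3b+9}{4b+10}\le\lambda_2$ so the two contributions collapse into a single $Z(t)^{\lambda_2}$), then absorbs $\epsilon W(t)$ and applies Young's inequality with $0<\lambda_1,\lambda_2<1$ to obtain $W(t)+Z(t)\le C$, after which \eqref{b58}, \eqref{b62} give $\theta\le\overline\Theta$ and the remaining estimates follow from the earlier lemmas. Your variant merely eliminates $W(t)$ first and solves for $X(t)+Y(t)$ before closing on $Z(t)$, which is the same bookkeeping in a slightly different order.
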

\begin{proof}
Putting (\ref{b137}) and (\ref{b139}) together and using Lemma 2.8, one has
 \begin{eqnarray*}
 W(t)+Z(t)&&\leq C\left(1+X(t)+Y(t)+Z(t)^{\lambda_{2}}\right)\nonumber\\
 &&\leq C\left(1+\epsilon W(t)+Z(t)^{\lambda_{1}}+Z(t)^{\lambda_{2}}\right).
  \end{eqnarray*}

Noticing $0<\lambda_{1}, \lambda_{2}<1$, we get by choosing $\epsilon>0$ small enough and by using Young's inequality that
\begin{equation}\label{b153}
 W(t)+Z(t)\leq C.
\end{equation}

Combining (\ref{b58}), (\ref{b62}) and (\ref{b153}), we can obtain (\ref{b151}) immediately. (\ref{b152}) follows from Lemma 2.1-Lemma 2.9 and (\ref{b151}).
\end{proof}

The next lemma is concerned with the estimate on $\|u_{x}(t)\|$.

\begin{lemma} Under the assumptions listed in Lemma 2.1, we have for any $0\leq t\leq T$ that
\begin{equation}\label{b154}
\|u_{x}(t)\|^{2}+\int_{0}^{t}\left\|u_{xx}(s)\right\|^2ds\leq C.
\end{equation}
\end{lemma}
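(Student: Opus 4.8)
The plan is to close the estimate by a direct energy argument on the momentum equation $(\ref{a1})_{2}$, tested against $-u_{xx}$. Writing $(\ref{a1})_{2}$ as $u_t+p(v,\theta)_x=\mu(u_x/v)_x$, multiplying by $-u_{xx}$, integrating over $\mathbb{R}$ (the boundary contributions as $|x|\to\infty$ vanish since $(u,u_x)\in C(0,T;H^1(\mathbb{R}))$), and using $(u_x/v)_x=u_{xx}/v-u_xv_x/v^2$, I expect to arrive at the identity
\begin{equation*}
\frac{1}{2}\frac{d}{dt}\|u_x\|^2+\mu\int_{\mathbb{R}}\frac{u_{xx}^2}{v}\,dx
=\int_{\mathbb{R}}p(v,\theta)_x\,u_{xx}\,dx+\mu\int_{\mathbb{R}}\frac{u_xv_xu_{xx}}{v^2}\,dx .
\end{equation*}
After integrating over $(0,t)$ and using the two-sided bound $\underline V\le v\le\overline V$ from Lemma 2.6, and after absorbing suitable fractions of $\mu\int_0^t\!\!\int_{\mathbb{R}}u_{xx}^2/v\,dxds$ into the left-hand side, the problem reduces to bounding $\int_0^t\|p(v,\theta)_x(s)\|^2\,ds$ and $\int_0^t\|(u_xv_x)(s)\|^2\,ds$ by constants depending only on the data.

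For the pressure term I would use $p(v,\theta)_x=R\theta_x/v-R\theta v_x/v^2+\tfrac{4}{3}a\theta^3\theta_x$ together with the uniform upper bound $\theta\le\overline\Theta$ from Lemma 2.10. The only piece requiring care is $R\theta v_x/v^2$: since $\theta/v$ is bounded, its square is controlled by $\theta v_x^2/v^3$, so that
\begin{equation*}
\int_0^t\|p(v,\theta)_x(s)\|^2\,ds\le C\int_0^t\|\theta_x(s)\|^2\,ds+C\int_0^t\!\!\int_{\mathbb{R}}\frac{\theta v_x^2}{v^3}\,dxds\le C,
\end{equation*}
the last inequality being immediate from estimate $\eqref{b152}$ of Lemma 2.10, which supplies both $\int_0^t\|\theta_x\|^2\,ds\le C$ and $\int_0^t\|\sqrt\theta v_x\|^2\,ds\le C$; the cubic term $\theta^3\theta_x$ is harmless because $\theta\le\overline\Theta$.

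The genuinely delicate point — and the one I expect to be the main obstacle — is the term $\int_0^t\|(u_xv_x)(s)\|^2\,ds\le\int_0^t\|u_x(s)\|_{L^\infty(\mathbb{R})}^2\,\|v_x(s)\|^2\,ds$, because at this stage no bound on $\int_0^t\|v_x(s)\|^2\,ds$ that is uniform in $t$ is available: one only has the pointwise bound $\|v_x(t)\|^2\le C$ and the $\theta$-weighted bound $\int_0^t\|\sqrt\theta v_x\|^2\,ds\le C$, and no uniform positive lower bound on $\theta$ has yet been established. I would get around this by trading the missing $v_x$–integral for the available $u_x$–integral through the one-dimensional interpolation $\|u_x\|_{L^\infty(\mathbb{R})}^2\le 2\|u_x\|\,\|u_{xx}\|$, combined with $\|v_x(s)\|^2\le C$ from $\eqref{b152}$:
\begin{equation*}
\int_0^t\|u_x(s)\|_{L^\infty(\mathbb{R})}^2\|v_x(s)\|^2\,ds
\le C\int_0^t\|u_x(s)\|\,\|u_{xx}(s)\|\,ds
\le C\Big(\int_0^t\|u_x(s)\|^2\,ds\Big)^{1/2}\Big(\int_0^t\|u_{xx}(s)\|^2\,ds\Big)^{1/2}
\le C\Big(\int_0^t\|u_{xx}(s)\|^2\,ds\Big)^{1/2},
\end{equation*}
using once more that $\int_0^t\|u_x(s)\|^2\,ds\le C$ by $\eqref{b152}$.

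Collecting all of the above and setting $A(t):=\int_0^t\|u_{xx}(s)\|^2\,ds$ — which is finite for each $t\le T$ since $\|u_{xx}(t)\|^2\le C$ by $\eqref{b152}$ — the time-integrated identity becomes an inequality of the form $\|u_x(t)\|^2+c\,A(t)\le C+C\,A(t)^{1/2}$ with $c>0$ depending only on $\mu$ and $\overline V$; Young's inequality absorbs $C A(t)^{1/2}$ and gives $A(t)\le C$ together with $\|u_x(t)\|^2\le C$, which is precisely $\eqref{b154}$. Thus the only nonroutine ingredient is the handling of the $u_xv_xu_{xx}$–term via interpolation and absorption rather than through any new bound on $v_x$; everything else is a bookkeeping of the estimates already recorded in Lemmas 2.6 and 2.10.
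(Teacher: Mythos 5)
Your proposal is correct and follows essentially the same route as the paper's proof: test $(\ref{a1})_{2}$ against $u_{xx}$, absorb $\mu\int_0^t\int_{\mathbb R}u_{xx}^2/v$ on the left via Lemma~2.6, control the pressure terms using $\theta\le\overline\Theta$ together with $\int_0^t\|\theta_x\|^2ds\le C$ and $\int_0^t\|\sqrt\theta\,v_x\|^2ds\le C$ from \eqref{b152}, and handle the $u_xv_xu_{xx}$ term via $\|v_x(t)\|\le C$ and $\|u_x\|_{L^\infty}^2\lesssim\|u_x\|\,\|u_{xx}\|$. The only cosmetic difference is that you close with a Cauchy--Schwarz in $t$ plus a final Young step yielding $A(t)\le C+CA(t)^{1/2}$, whereas the paper applies Young pointwise in $s$ and then invokes $\int_0^t\|u_x\|^2ds\le C$ directly; the two bookkeeping choices are equivalent.
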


\begin{proof}
Multiplying $(\ref{a1})_{2}$ by $u_{xx}$ and integrating the result identity with respect to $t$ and $x$ over $\left(0,t\right)\times\mathbb{R}$, one has
\begin{eqnarray}\label{b155}
&&\|u_{x}(t)\|^{2}+\int_{0}^{t}\int_{\mathbb{R}}\frac{\mu u^{2}_{xx}}{v}dxds\\
&\leq& C+C\int_{0}^{t}\int_{\mathbb{R}}\left(\left|\left(\frac{\theta}{v}\right)_{x}u_{xx}\right|
+\theta^{3}\left|\theta_{x}u_{xx}\right|+\frac{\left|u_{x}u_{xx}v_{x}\right|}{v}\right)dxds.\nonumber
\end{eqnarray}
The terms in the right-hand side of (\ref{b155}) can be estimated as follows:
\begin{eqnarray}\label{b156}
&&\int_{0}^{t}\int_{\mathbb{R}}\left|\left(\frac{\theta}{v}\right)_{x}u_{xx}\right|dxds\nonumber\\
&\leq& \int_{0}^{t}\int_{\mathbb{R}}\left(\frac{\left|\theta_{x}u_{xx}\right|}{v}
+\frac{\theta\left|v_{x}u_{xx}\right|}{v^{2}}\right)dxds\\
&\leq& \epsilon\int_{0}^{t}\int_{\mathbb{R}}\frac{\mu u^{2}_{xx}}{v}dxds+C\left(\epsilon\right)\int_{0}^{t}\int_{\mathbb{R}} \left(\frac{\kappa(v,\theta)\theta^{2}_{x}}{v\theta^{2}}\cdot\frac{\theta^{2}}{1+v\theta^{b}}
+\theta^{2}v_{x}^{2}\right)dxds\nonumber\\
&\leq& \epsilon\int_{0}^{t}\int_{\mathbb{R}}\frac{\mu u^{2}_{xx}}{v}dxds+C\left(\epsilon\right),\nonumber
\end{eqnarray}
\begin{eqnarray}\label{b157}
&&\int_{0}^{t}\int_{\mathbb{R}}\theta^{3}\left|\theta_{x}u_{xx}\right|dxds\nonumber\\
&\leq& \epsilon\int_{0}^{t}\int_{\mathbb{R}}\frac{\mu u^{2}_{xx}}{v}dxds+C\left(\epsilon\right)\int_{0}^{t}\int_{\mathbb{R}}\frac{\kappa(v,\theta)\theta^{2}_{x}}{v\theta^{2}}\cdot\frac{\theta^{8}}{1+v\theta^{b}}dxds\\
&\leq& \epsilon\int_{0}^{t}\int_{\mathbb{R}}\frac{\mu u^{2}_{xx}}{v}dxds+C\left(\epsilon\right)\nonumber
\end{eqnarray}
and
\begin{eqnarray}\label{b158}
&&\int_{0}^{t}\int_{\mathbb{R}}\frac{\left|u_{x}u_{xx}v_{x}\right|}{v}dxds\nonumber\\
&\leq& \epsilon\int_{0}^{t}\int_{\mathbb{R}}\frac{\mu u^{2}_{xx}}{v}dxds+C\left(\epsilon\right)\int_{0}^{t}\int_{\mathbb{R}}u_{x}^{2}v_{x}^{2}dxds\nonumber\\
&\leq& \epsilon\int_{0}^{t}\int_{\mathbb{R}}\frac{\mu u^{2}_{xx}}{v}dxds+C\left(\epsilon\right)\int_{0}^{t}\|u_{x}\|^{2}_{L^{\infty}\left(\Omega\right)}ds\\
&\leq& \epsilon\int_{0}^{t}\int_{\mathbb{R}}\frac{\mu u^{2}_{xx}}{v}dxds+C\left(\epsilon\right)\int_{0}^{t}\|u_{x}(s)\|\|u_{xx}(s)\|ds\nonumber\\
&\leq& 2\epsilon\int_{0}^{t}\int_{\mathbb{R}}\frac{\mu u^{2}_{xx}}{v}dxds+C\left(\epsilon\right)\int_{0}^{t}\int_{\mathbb{R}}\frac{\mu u_{x}^{2}}{v\theta}\cdot\theta dxds\nonumber\\
&\leq& 2\epsilon\int_{0}^{t}\int_{\mathbb{R}}\frac{\mu u^{2}_{xx}}{v}dxds+C\left(\epsilon\right),\nonumber
\end{eqnarray}
where we have used Lemma 2.10, Sobolev's inequality and the assumption $b>\frac{11}{3}$.

Combining all the above estimates and by choosing $\epsilon>0$ small enough, we can complete the proof of our lemma.
\end{proof}

The main purpose of the following lemma is to deduce a nice bound on $\int_{0}^{t}\left\|\theta_{xx}(s)\right\|^2ds$.
\begin{lemma} Under the assumptions listed in Lemma 2.1, we have for $0\leq t\leq T$ that
\begin{equation}\label{b160}
\left\|\theta_{x}(t)\right\|^{2}+\int_{0}^{t}\left\|\theta_{xx}(s)\right\|^2ds\leq C.
\end{equation}
\end{lemma}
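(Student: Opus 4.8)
The plan is to read the estimate \eqref{b160} directly off the third equation in \eqref{a1}, using the uniform bounds already available from Lemma 2.6, Lemma 2.10 and Lemma 2.11; in particular $\|\theta_{x}(t)\|^{2}\le C$ is already contained in \eqref{b152}, so the real content is the bound on $\int_{0}^{t}\|\theta_{xx}(s)\|^{2}\,ds$. First I would recast $\eqref{a1}_{3}$ as an equation for $\theta_{xx}$. By the constitutive relations \eqref{a3} one has $e_{t}+p(v,\theta)u_{x}=e_{\theta}(v,\theta)\theta_{t}+\theta p_{\theta}(v,\theta)u_{x}$, so that, writing $\left(\frac{\kappa(v,\theta)\theta_{x}}{v}\right)_{x}=\frac{\kappa(v,\theta)}{v}\theta_{xx}+\left(\frac{\kappa(v,\theta)}{v}\right)_{x}\theta_{x}$ and using the lower bound $\frac{\kappa(v,\theta)}{v}=\frac{\kappa_{1}}{v}+\kappa_{2}\theta^{b}\ge\frac{\kappa_{1}}{\overline{V}}>0$ (a consequence of \eqref{b43}), $\eqref{a1}_{3}$ is equivalent to the pointwise identity
$$\frac{\kappa(v,\theta)}{v}\theta_{xx}=e_{\theta}(v,\theta)\theta_{t}+\theta p_{\theta}(v,\theta)u_{x}-\frac{\mu u_{x}^{2}}{v}-\lambda\phi z-\left(\frac{\kappa(v,\theta)}{v}\right)_{x}\theta_{x}.$$
Squaring and integrating over $\mathbb{R}$ then gives
$$\|\theta_{xx}(s)\|^{2}\le C\left(\|\theta_{t}(s)\|^{2}+\|(\theta p_{\theta}(v,\theta)u_{x})(s)\|^{2}+\left\|\left(\tfrac{u_{x}^{2}}{v}\right)(s)\right\|^{2}+\|(\phi z)(s)\|^{2}+\left\|\left(\left(\tfrac{\kappa(v,\theta)}{v}\right)_{x}\theta_{x}\right)(s)\right\|^{2}\right).$$

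Next I would integrate this inequality over $[0,t]$ and estimate the five terms on the right. Since $\theta\le\overline{\Theta}$ by Lemma 2.10, the coefficients $e_{\theta}=C_{v}+4av\theta^{3}$, $\theta p_{\theta}=\frac{R\theta}{v}+\frac{4}{3}a\theta^{4}$ and $\phi=K\theta^{\beta}e^{-A/\theta}$ are all uniformly bounded (hence also $\phi^{2}\le C\phi$). Therefore $\int_{0}^{t}\|\theta_{t}\|^{2}\,ds\le C$ and $\int_{0}^{t}\|\theta p_{\theta}(v,\theta)u_{x}\|^{2}\,ds\le C\int_{0}^{t}\|u_{x}\|^{2}\,ds\le C$ by \eqref{b152}; $\int_{0}^{t}\|u_{x}^{2}/v\|^{2}\,ds\le C\int_{0}^{t}\|u_{x}(s)\|^{4}_{L^{4}(\mathbb{R})}\,ds\le C$ by \eqref{b152-2}; and $\int_{0}^{t}\|\phi z\|^{2}\,ds\le C\int_{0}^{t}\int_{\mathbb{R}}\phi z^{2}\,dx\,ds\le C$ by \eqref{b2} (recall $0\le z\le1$). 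So the first four contributions are already under control.

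The one delicate point — and where I expect the actual work to be — is the last term, which is quadratic in the first derivatives. Since $\left(\frac{\kappa(v,\theta)}{v}\right)_{x}=-\frac{\kappa_{1}v_{x}}{v^{2}}+\kappa_{2}b\theta^{b-1}\theta_{x}$, the bounds $\underline{V}\le v$ and $\theta\le\overline{\Theta}$ give $\left|\left(\frac{\kappa(v,\theta)}{v}\right)_{x}\right|\le C(|v_{x}|+|\theta_{x}|)$, hence
$$\left\|\left(\left(\tfrac{\kappa(v,\theta)}{v}\right)_{x}\theta_{x}\right)(s)\right\|^{2}\le C\|\theta_{x}(s)\|_{L^{\infty}(\mathbb{R})}^{2}\left(\|v_{x}(s)\|^{2}+\|\theta_{x}(s)\|^{2}\right)\le C\|\theta_{x}(s)\|_{L^{\infty}(\mathbb{R})}^{2},$$
where I used that $\|v_{x}(t)\|$ and $\|\theta_{x}(t)\|$ are uniformly bounded, again by \eqref{b152}. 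The Gagliardo--Nirenberg inequality $\|\theta_{x}\|_{L^{\infty}(\mathbb{R})}^{2}\le C\|\theta_{x}\|\,\|\theta_{xx}\|$ then yields
$$\int_{0}^{t}\left\|\left(\tfrac{\kappa(v,\theta)}{v}\right)_{x}\theta_{x}\right\|^{2}\,ds\le C\int_{0}^{t}\|\theta_{x}\|\,\|\theta_{xx}\|\,ds\le\epsilon\int_{0}^{t}\|\theta_{xx}\|^{2}\,ds+C(\epsilon)\int_{0}^{t}\|\theta_{x}\|^{2}\,ds\le\epsilon\int_{0}^{t}\|\theta_{xx}\|^{2}\,ds+C(\epsilon),$$
the last step once more by \eqref{b152}. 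Collecting the above, $\int_{0}^{t}\|\theta_{xx}(s)\|^{2}\,ds\le C(\epsilon)+\epsilon C\int_{0}^{t}\|\theta_{xx}(s)\|^{2}\,ds$; choosing $\epsilon$ small and absorbing the last term into the left-hand side gives $\int_{0}^{t}\|\theta_{xx}(s)\|^{2}\,ds\le C$, which together with the bound $\|\theta_{x}(t)\|^{2}\le C$ from \eqref{b152} proves \eqref{b160}. The mechanism is that the potentially dangerous quadratic first-derivative term closes only because, at this stage, one simultaneously has the a priori $L^{\infty}$-bound on $\theta$ (Lemma 2.10), the time-integrated bounds $\int_{0}^{t}\|\theta_{x}\|^{2}\,ds$ and $\int_{0}^{t}\|u_{x}\|_{L^{4}}^{4}\,ds$, and the dissipation bound \eqref{b2} for $z$.
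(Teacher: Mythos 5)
Your proof is correct, but it is not the one the paper gives. The paper multiplies $\eqref{a1}_{3}$ (rewritten as \eqref{b161}) by $-\theta_{xx}/e_{\theta}(v,\theta)$ and integrates over $\mathbb{R}$, so that the $e_{\theta}\theta_{t}$ term produces $\frac{1}{2}\frac{d}{dt}\|\theta_{x}(t)\|^{2}$ on the left and $\int_{\mathbb{R}}\frac{\kappa\theta_{xx}^{2}}{ve_{\theta}}\,dx$ appears as the good dissipative term; the remaining contributions $I_{13},\dots,I_{18}$ are then absorbed into an $\epsilon$-fraction of this good term, with the leftovers controlled by $V(t)$, $\int u_{x}^{4}\,dx$ and $\int\phi z^{2}\,dx$, all of which are time-integrable. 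In particular, the paper rederives the bound on $\|\theta_{x}(t)\|^{2}$ as part of the energy estimate and never invokes the already-established bound $\int_{0}^{t}\|\theta_{t}(s)\|^{2}\,ds\leq C$. You instead observe that $\|\theta_{x}(t)\|^{2}\leq C$ is already contained in \eqref{b152}, isolate $\theta_{xx}$ algebraically from the equation, and bound $\|\theta_{xx}\|^{2}$ term by term, leaning on $\int_{0}^{t}\|\theta_{t}\|^{2}\,ds\leq C$, $\int_{0}^{t}\|u_{x}\|_{L^{4}}^{4}\,ds\leq C$, $\int_{0}^{t}\int\phi z^{2}\leq C$, the uniform $L^{2}$ bounds on $v_{x}$ and $\theta_{x}$, and the Gagliardo--Nirenberg interpolation $\|\theta_{x}\|_{L^{\infty}}^{2}\leq C\|\theta_{x}\|\,\|\theta_{xx}\|$ followed by absorption. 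Both arguments close for the same reason (all the lower-order terms are time-integrable under Lemma 2.10), but yours is the more economical one here: once $\|\theta_{x}(t)\|^{2}$, $\int_{0}^{t}\|\theta_{t}\|^{2}\,ds$ and the $\theta$-upper bound are in hand, the $\theta_{xx}$ bound is essentially a weighted elliptic estimate read straight off the PDE, without redoing the $\frac{d}{dt}\|\theta_{x}\|^{2}$ energy computation. The paper's version is more self-contained in that it would go through even without first knowing $\int_{0}^{t}\|\theta_{t}\|^{2}\,ds\leq C$.
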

 \begin{proof}
 We first rewrite $(\ref{a1})_{3}$ in the following form
\begin{equation}\label{b161}
e_{\theta}(v,\theta)\theta_{t}+\theta p_{\theta}(v,\theta)u_{x}=\frac{\mu u_{x}^{2}}{v}+\left(\frac{\kappa(v,\theta)\theta_{x}}{v}\right)_{x}+\lambda\phi z.
  \end{equation}

Multiplying (\ref{b161}) by $-\frac{\theta_{xx}}{e_{\theta}}$ and integrating the result identity with respect to $x$ over $\mathbb{R}$, one has
\begin{eqnarray}\label{b162}
&&\frac{1}{2}\frac{d}{dt}\|\theta_{x}(t)\|^{2}+\int_{\mathbb{R}}\frac{\kappa(v,\theta)\theta^{2}_{xx}}{ve_{\theta}(v,\theta)}dx\nonumber\\
&=&\int_{\mathbb{R}}\left\{\theta p_{\theta}(v,\theta)u_{x}-\frac{\mu u_{x}^{2}}{v}
-\frac{\kappa_{v}(v,\theta)v_{x}+\kappa_{\theta}(v,\theta)\theta_{x}}{v}\theta_{x} +\frac{\kappa(v,\theta)\theta_{x}v_{x}}{v^{2}}-\lambda\phi z\right\}\frac{\theta_{xx}}{e_{\theta}(v,\theta)}dx\nonumber\\
&\leq& \underbrace{\left|\int_{\mathbb{R}}\frac{\theta p_{\theta}(v,\theta)u_{x}\theta_{xx}}{e_{\theta}(v,\theta)}dx\right|}_{I_{13}}
+\underbrace{\left|\int_{\mathbb{R}}\frac{\mu u^{2}_{x}\theta_{xx}}{ve_{\theta}(v,\theta)}dx\right|}_{I_{14}}
+\underbrace{\left|\int_{\mathbb{R}}\frac{\kappa_{v}(v,\theta)v_{x}\theta_{x}\theta_{xx}}{ve_{\theta}(v,\theta)}dx\right|}_{I_{15}}\\
&&+\underbrace{\left|\int_{\mathbb{R}}\frac{\kappa_{\theta}(v,\theta)\theta^{2}_{x}\theta_{xx}}{ve_{\theta}(v,\theta)}dx\right|}_{I_{16}}
+\underbrace{\left|\int_{\mathbb{R}}\frac{\kappa(v,\theta)\theta_{x}v_{x}\theta_{xx}}{v^{2}e_{\theta}(v,\theta)}dx\right| }_{I_{17}}
+\underbrace{\left|\int_{\mathbb{R}}\frac{\lambda\phi z\theta_{xx}}{e_{\theta}(v,\theta)}dx\right|}_{I_{18}}.\nonumber
\end{eqnarray}

The terms $I_k (13\leq k\leq 18)$ can be estimated term by term by employing Cauchy's inequality, Sobolev's inequality and \eqref{b152} as follows:
\begin{eqnarray}\label{b163}
I_{13}
&&\leq\epsilon\int_{\mathbb{R}}\frac{\kappa(v,\theta)\theta^{2}_{xx}}{ve_{\theta}(v,\theta)}dx +C\left(\epsilon\right)\int_{\mathbb{R}}\frac{\mu u^{2}_{x}}{v\theta}\cdot\frac{v^{2}\theta^{3}p^{2}_{\theta}(v,\theta)} {\kappa(v,\theta) e_{\theta}(v,\theta)}dx\nonumber\\
&&\leq\epsilon\int_{\mathbb{R}}\frac{\kappa(v,\theta)\theta^{2}_{xx}}{ve_{\theta}(v,\theta)}dx+C\left(\epsilon\right)V\left(t\right),
\end{eqnarray}
\begin{eqnarray}\label{b164}
I_{14}
&&\leq\epsilon\int_{\mathbb{R}}\frac{\kappa(v,\theta)\theta^{2}_{xx}}{ve_{\theta}(v,\theta)}dx +C\left(\epsilon\right)\int_{\mathbb{R}}\frac{u^{4}_{x}}{\kappa(v,\theta) ve_{\theta}(v,\theta)}dx\nonumber\\
&&\leq\epsilon\int_{\mathbb{R}}\frac{\kappa(v,\theta)\theta^{2}_{xx}}{ve_{\theta}(v,\theta)}dx +C\left(\epsilon\right)\int_{\mathbb{R}}u^{4}_{x}dx,
\end{eqnarray}

\begin{eqnarray}\label{b165}
I_{15}
&&\leq\epsilon\int_{\mathbb{R}}\frac{\kappa(v,\theta)\theta^{2}_{xx}}{ve_{\theta}(v,\theta)}dx
+C\left(\epsilon\right)\int_{\mathbb{R}}v^{2}_{x}\theta^{2}_{x}\cdot\frac{\kappa_{v}^{2}(v,\theta)}{ve_{\theta}(v,\theta) \kappa(v,\theta)}dx\nonumber\\
&&\leq\epsilon\int_{\mathbb{R}}\frac{\kappa(v,\theta)\theta^{2}_{xx}}{ve_{\theta}(v,\theta)}dx
+C\left(\epsilon\right)\left\|\theta_{x}(t)\right\|\left\|\theta_{xx}(t)\right\|\\
&&\leq 2\epsilon\int_{\mathbb{R}}\frac{\kappa(v,\theta)\theta^{2}_{xx}}{ve_{\theta}(v,\theta)}dx +C\left(\epsilon\right)V\left(t\right),\nonumber
\end{eqnarray}
\begin{eqnarray}\label{b168}
I_{16}
&&\leq\epsilon\int_{\mathbb{R}}\frac{\kappa(v,\theta)\theta^{2}_{xx}}{ve_{\theta}(v,\theta)}dx +C\left(\epsilon\right)\int_{\mathbb{R}}\frac{\theta^{4}_{x}\kappa^{2}_{\theta}(v,\theta)}{v\kappa(v,\theta) e_{\theta}(v,\theta)}dx \nonumber\\
&&\leq\epsilon\int_{\mathbb{R}}\frac{\kappa(v,\theta)\theta^{2}_{xx}}{ve_{\theta}(v,\theta)}dx
+C\left(\epsilon\right)\left\|\theta_{x}(t)\right\|\left\|\theta_{xx}(t)\right\|\\
&&\leq2\epsilon\int_{\mathbb{R}}\frac{\kappa(v,\theta)\theta^{2}_{xx}}{ve_{\theta}(v,\theta)}dx +C\left(\epsilon\right)V\left(t\right),\nonumber
\end{eqnarray}
\begin{eqnarray}\label{b169}
I_{17}
&&\leq\epsilon\int_{\mathbb{R}}\frac{\kappa(v,\theta)\theta^{2}_{xx}}{ve_{\theta}(v,\theta)}dx
+C\left(\epsilon\right)\int_{\mathbb{R}}\theta^{2}_{x}v^{2}_{x}\cdot\frac{\kappa(v,\theta)}{v^{3}e_{\theta}(v,\theta)}dx\nonumber\\
&&\leq\epsilon\int_{\mathbb{R}}\frac{\kappa(v,\theta)\theta^{2}_{xx}}{ve_{\theta}(v,\theta)}dx
+C\left(\epsilon\right)\|\theta_{x}(t)\|\|\theta_{xx}(t)\|\\
&&\leq2\epsilon\int_{\mathbb{R}}\frac{\kappa(v,\theta)\theta^{2}_{xx}}{ve_{\theta}(v,\theta)}dx +C\left(\epsilon\right)V\left(t\right),\nonumber
\end{eqnarray}
and
\begin{eqnarray}\label{b170}
I_{18}
&&\leq\epsilon\int_{\mathbb{R}}\frac{\kappa(v,\theta)\theta^{2}_{xx}}{ve_{\theta}(v,\theta)}dx
+C\left(\epsilon\right)\int_{\mathbb{R}}\frac{v\phi^{2}z^{2}}{\kappa(v,\theta) e_{\theta}(v,\theta)}dx\nonumber\\
&&\leq\epsilon\int_{\mathbb{R}}\frac{\kappa(v,\theta)\theta^{2}_{xx}}{ve_{\theta}(v,\theta)}dx
+C\left(\epsilon\right)\int_{\mathbb{R}}\phi z^{2}dx.
\end{eqnarray}
Here we have used the following estimate in deducing the estimates \eqref{b165}, \eqref{b168} and \eqref{b169}
\begin{eqnarray*}
&&\left\|\theta_{x}(t)\right\|\left\|\theta_{xx}(t)\right\|\\
&\leq& \epsilon\int_{\mathbb{R}}\frac{\kappa(v,\theta)\theta^{2}_{xx}}{ve_{\theta}(v,\theta)}dx+
\left(\int_{\mathbb{R}}\frac{\kappa(v,\theta)\theta^{2}_{x}}{v\theta^{2}} \cdot\frac{v\theta^{2}}{\kappa(v,\theta)}dx\right)^{\frac{1}{2}}
\left(\int_{\mathbb{R}}\frac{\kappa(v,\theta)\theta^{2}_{xx}}{ve_{\theta}(v,\theta)} \cdot\frac{ve_{\theta}(v,\theta)}{\kappa(v,\theta)}dx\right)^{\frac{1}{2}}\nonumber\\
&\leq& \epsilon\int_{\mathbb{R}}\frac{\kappa(v,\theta)\theta^{2}_{xx}}{ve_{\theta}(v,\theta)}dx+ CV^{\frac{1}{2}}\left(t\right)\left(\int_{\mathbb{R}}\frac{\kappa(v,\theta)\theta^{2}_{xx}}{ve_{\theta}(v,\theta)}dx\right)^{\frac{1}{2}}\nonumber\\
&\leq& \epsilon\int_{\mathbb{R}}\frac{\kappa(v,\theta)\theta^{2}_{xx}}{ve_{\theta}(v,\theta)}dx+ CV^{\frac{1}{2}}(t).
\end{eqnarray*}

Combining (\ref{b162})-(\ref{b170}) and by choosing $\epsilon>0$ small enough, we arrive at
\begin{equation}\label{b171}
\frac{1}{2}\frac{d}{dt}\|\theta_{x}(t)\|^{2}+\left\|\theta_{xx}(t)\right\|^2\leq C\left(V\left(t\right)+\int_{\mathbb{R}}u^{4}_{x}(t,x)dx+\int_{\mathbb{R}}\phi(t,x) z^{2}(t,x)dx\right).
\end{equation}

Integrating(\ref{b171}) with respect to $t$ over $\left(0, t\right)$ and using \eqref{b152}, we can complete the proof of our lemma.
\end{proof}

The following lemma is concerned with an estimate on $\|z_{x}(t)\|^{2}$.
\begin{lemma} Under the assumptions listed in Lemma 2.1, we can get for any $0\leq t\leq T$ that
\begin{equation}\label{b172}
\left\|z_{x}(t)\right\|^{2}+\int_{0}^{t}\left\|z_{xx}(s)\right\|^2ds\leq C.
\end{equation}
\end{lemma}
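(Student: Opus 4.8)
The plan is to test the reaction--diffusion equation $(\ref{a1})_{4}$ against $-z_{xx}$ and to close the resulting differential inequality for $\|z_{x}(t)\|^{2}$ using the uniform bounds already at our disposal, namely $\underline{V}\leq v\leq\overline{V}$ from Lemma 2.6, the upper bound $\theta\leq\overline{\Theta}$ together with the uniform-in-time bound $\|v_{x}(t)\|^{2}\leq C$ from Lemma 2.10, and $0\leq z\leq1$ from Lemma 2.2. Concretely, I would first rewrite $(\ref{a1})_{4}$ as $z_{t}=\frac{dz_{xx}}{v^{2}}-\frac{2dv_{x}z_{x}}{v^{3}}-\phi z$, multiply by $-z_{xx}$, integrate over $\mathbb{R}$, and integrate by parts in the left-hand term, $-\int_{\mathbb{R}}z_{t}z_{xx}\,dx=\frac{1}{2}\frac{d}{dt}\|z_{x}(t)\|^{2}$, to arrive at
\[
\frac{1}{2}\frac{d}{dt}\|z_{x}(t)\|^{2}+\int_{\mathbb{R}}\frac{dz_{xx}^{2}}{v^{2}}\,dx=\int_{\mathbb{R}}\frac{2dv_{x}z_{x}z_{xx}}{v^{3}}\,dx+\int_{\mathbb{R}}\phi zz_{xx}\,dx.
\]

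For the first term on the right I would use Cauchy's inequality together with the Gagliardo--Nirenberg inequality $\|z_{x}(t)\|_{L^{\infty}(\mathbb{R})}^{2}\leq C\|z_{x}(t)\|\|z_{xx}(t)\|$ and $\|v_{x}(t)\|^{2}\leq C$ to obtain $\int_{\mathbb{R}}v_{x}^{2}z_{x}^{2}\,dx\leq\|z_{x}(t)\|_{L^{\infty}(\mathbb{R})}^{2}\|v_{x}(t)\|^{2}\leq C\|z_{x}(t)\|\|z_{xx}(t)\|$, so that this term is bounded by $\epsilon\|z_{xx}(t)\|^{2}+C(\epsilon)\|z_{x}(t)\|^{2}$; for the second term, since $\theta\leq\overline{\Theta}$ forces $\phi=\phi(\theta)$ to be bounded and $0\leq z\leq1$, we have $\phi^{2}z^{2}\leq C\phi z^{2}$, hence it is bounded by $\epsilon\|z_{xx}(t)\|^{2}+C(\epsilon)\int_{\mathbb{R}}\phi z^{2}\,dx$. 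Choosing $\epsilon$ small enough to absorb the $\|z_{xx}(t)\|^{2}$ contributions into the dissipation term on the left (using $v\leq\overline{V}$ there), this yields
\[
\frac{d}{dt}\|z_{x}(t)\|^{2}+C_{1}\|z_{xx}(t)\|^{2}\leq C\left(\|z_{x}(t)\|^{2}+\int_{\mathbb{R}}\phi(t,x)z^{2}(t,x)\,dx\right).
\]

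Integrating this in time over $(0,t)$, using $z_{0}\in H^{1}(\mathbb{R})$ for the initial value, the proof is finished by invoking the bounds $\int_{0}^{t}\|z_{x}(s)\|^{2}\,ds\leq C$ and $\int_{0}^{t}\int_{\mathbb{R}}\phi z^{2}\,dxds\leq C$, both immediate consequences of \eqref{b2} (the former after using $v\leq\overline{V}$, the latter again using that $\phi$ is bounded by the temperature upper bound), so \eqref{b172} follows; no Gronwall argument is needed. This estimate is essentially routine parabolic regularity; the only slightly delicate point is the handling of $\int_{\mathbb{R}}v_{x}^{2}z_{x}^{2}\,dx$, whose treatment requires interpolating $z_{x}$ in $L^{\infty}$ and relies crucially on the \emph{uniform-in-time} bound $\|v_{x}(t)\|^{2}\leq C$ furnished by Lemma 2.10 rather than on the temperature-dependent bound of Lemma 2.7.
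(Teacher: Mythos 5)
Your proof is correct and follows essentially the same route as the paper: multiply $(\ref{a1})_{4}$ by $z_{xx}$, absorb the two right-hand terms via $\phi^{2}z^{2}\le C\phi z^{2}$ (using the temperature upper bound) and the interpolation $\int v_{x}^{2}z_{x}^{2}\,dx\le C\|z_{x}\|\,\|z_{xx}\|$ (using $\|v_{x}(t)\|\le C$ from Lemma 2.10), and then integrate in time invoking \eqref{b2}. The only cosmetic difference is the sign convention ($-z_{xx}$ versus $z_{xx}$), and the remark that $\int_{0}^{t}\int\phi z^{2}$ "again" needs the temperature bound is superfluous since that quantity is controlled directly by \eqref{b2}; neither affects the argument.
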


\begin{proof}
Multiplying $\eqref{a1}_{4}$ by $z_{xx}$ and integrating the  resulting identity  with respect to $x$ over $\mathbb{R}$, one has
\begin{equation}\label{b174}
\frac{d}{dt}\|z_{x}(t)\|^{2}+\left\|z_{xx}(t)\right\|^2\leq C\int_{\mathbb{R}}\left(\phi z\left|z_{xx}\right|+\frac{\left|z_{x}v_{x}z_{xx}\right|}{v^{3}}\right)(t,x)dx.
\end{equation}

The terms on the right-hand side of (\ref{b174}) can be estimated as follows
\begin{eqnarray}\label{b175}
\int_{\mathbb{R}}\phi z\left|z_{xx}\right|dxds&&\leq\epsilon\int_{\mathbb{R}}z^{2}_{xx}dx
+C\left(\epsilon\right)\int_{\mathbb{R}}\phi^{2} z^{2}dx\nonumber\\
&&\leq\epsilon\int_{\mathbb{R}}z^{2}_{xx}dx
+C\left(\epsilon\right)\|\theta\|^{\beta}_{\infty}\int_{\mathbb{R}}\phi z^{2}dx\\
&&\leq\epsilon\int_{\mathbb{R}}z^{2}_{xx}dx
+C\left(\epsilon\right)\int_{\mathbb{R}}\phi z^{2}dx,\nonumber
\end{eqnarray}
where we have used (\ref{b151}), while
\begin{eqnarray}\label{b176}
\int_{\mathbb{R}}\frac{\left|z_{x}v_{x}z_{xx}\right|}{v^{3}}dx&&\leq\epsilon\int_{\mathbb{R}}z^{2}_{xx}dx
+C\left(\epsilon\right)\int_{\mathbb{R}}z_{x}^{2}v_{x}^{2}dx\nonumber\\
&&\leq\epsilon\int_{\mathbb{R}}z^{2}_{xx}dx
+C\left(\epsilon\right)\|z_{x}\|\|z_{xx}\|\\
&&\leq 2\epsilon\int_{\mathbb{R}}z^{2}_{xx}dx
+C\left(\epsilon\right)\int_{\mathbb{R}}z_{x}^{2}dx,\nonumber
\end{eqnarray}
where we have used \eqref{b152}.

Combining (\ref{b174})-(\ref{b176}) and by choosing $\epsilon>0$ small enough, we can obtain
\begin{equation}\label{b177}
\frac{d}{dt}\|z_{x}(t)\|^{2}+\left\|z_{xx}(t)\right\|^2\leq C\int_{\mathbb{R}}\left(\phi z^{2}+z_{x}^{2}\right)(t,x)dx.
\end{equation}

Then integrating (\ref{b177}) with respect to $t$ over $\left(0, t\right)$ and by using \eqref{b2}, we can obtain \eqref{b172}.
This completes the proof of Lemma 2.13.
\end{proof}

The next lemma is concerned with an estimate on the lower bound of the temperature $\theta\left(t, x\right)$.

\begin{lemma} Under the assumptions stated in Lemma 2.1, for each $0\leq s\leq t\leq T$ and $x\in\mathbb{R}$, the following estimate
  \begin{equation}\label{b180}
  \theta\left(t,x\right)\geq \frac{C\min\limits_{x\in\mathbb{R}}\{\theta(s,x)\}}{1+(t-s)\min\limits_{x\in\mathbb{R}}\{\theta(s,x)\}}
  \end{equation}
holds for some positive constant $C$ which depends only on the initial data $(v_0(x), u_0(x), \theta_0(x), z_0(x))$.
\end{lemma}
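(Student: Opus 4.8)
The plan is to exhibit $\theta$ as a supersolution of a scalar Riccati ODE and then conclude by a comparison (maximum--principle) argument. First I would start from the energy equation in the form \eqref{b161},
$$e_\theta(v,\theta)\theta_t-\left(\frac{\kappa(v,\theta)\theta_x}{v}\right)_x=\frac{\mu u_x^2}{v}-\theta p_\theta(v,\theta)u_x+\lambda\phi z ,$$
and bound its right--hand side from below: completing the square gives $\frac{\mu u_x^2}{v}-\theta p_\theta(v,\theta)u_x\ge-\frac{v\theta^2p_\theta^2(v,\theta)}{4\mu}$, and $\lambda\phi z\ge0$; since $p_\theta=\frac{R}{v}+\frac43a\theta^3$ and the uniform bounds $\underline{V}\le v\le\overline{V}$ from \eqref{b43} and $\theta\le\overline{\Theta}$ from \eqref{b151} make every monomial of $v\theta^2p_\theta^2$ at most a constant times $\theta^2$, one arrives at
$$e_\theta(v,\theta)\theta_t-\left(\frac{\kappa(v,\theta)\theta_x}{v}\right)_x\ \ge\ -C\theta^2\qquad\text{on }[s,T]\times\mathbb{R},$$
the coefficient $\frac{\kappa(v,\theta)}{v}$ of $\theta_{xx}$ being bounded above and below by positive constants depending only on the data, and $C_v\le e_\theta(v,\theta)\le C$.

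Next I would set $m:=\inf_{x\in\mathbb{R}}\theta(s,x)$, which is positive (being $\ge\inf_x\theta_0>0$ when $s=0$, and $\ge N_1>0$ in the a priori class otherwise) and satisfies $m\le1$ because $\theta(s,\cdot)-1\in H^{1}(\mathbb{R})$ forces $\theta(s,x)\to1$ as $|x|\to\infty$; and introduce the spatially homogeneous comparison function
$$\underline{\theta}(t):=\frac{m}{1+C'm(t-s)},\qquad\text{solving}\qquad \underline{\theta}'=-C'\underline{\theta}^{2},$$
with $C'$ chosen (possible since $e_\theta\ge C_v$) so that $e_\theta(v,\underline{\theta})\,\underline{\theta}'\le -C\underline{\theta}^{2}$, i.e.\ $\underline{\theta}$ is a subsolution of the inequality just derived. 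Writing $w:=\theta-\underline{\theta}$ and using $\theta_x=w_x$ and $\theta^{2}-\underline{\theta}^{2}=(\theta+\underline{\theta})w$, one finds that $w$ satisfies the linear parabolic differential inequality
$$e_\theta(v,\theta)w_t-\left(\frac{\kappa(v,\theta)}{v}w_x\right)_x+C(\theta+\underline{\theta})\,w\ \ge\ 0,\qquad w(s,\cdot)\ge0 ,$$
with $C(\theta+\underline{\theta})$ nonnegative and bounded. Since $1+C'm(t-s)\le C'\bigl(1+m(t-s)\bigr)$ for $C'\ge1$ gives $\underline{\theta}(t)\ge \frac{C'^{-1}m}{1+m(t-s)}$, the desired estimate \eqref{b180} (after relabelling the generic constant) reduces to showing $w\ge0$ on $[s,T]\times\mathbb{R}$.

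The last step is the only delicate one: running the maximum principle on the unbounded domain, that is, ruling out that $w$ becomes negative by ``leaking to $|x|=\infty$''. Here the far--field normalization is exactly what is needed: from $\theta(t,\cdot)-1\in H^{1}(\mathbb{R})$ we get $\theta(t,x)\to1$ and $\theta_x(t,x)\to0$ as $|x|\to\infty$, and since $m\le1$ this gives $\liminf_{|x|\to\infty}w(t,x)\ge1-\underline{\theta}(t)\ge0$ uniformly in $t$, so the negative part $w_-:=\max\{0,-w\}$ vanishes at spatial infinity while $w_-(s,\cdot)\equiv0$. I would then test the inequality against $-w_-\le0$, integrate over $\mathbb{R}$ (boundary terms vanish by the above decay, and the regularity $\theta\in L^{2}(0,T;H^{2})$, $\theta_t\in L^{2}$, available from \eqref{b160}, \eqref{b152} and the equation, makes this licit), keep the diffusion term, and control $\partial_t e_\theta=4a(u_x\theta^{3}+3v\theta^{2}\theta_t)$ by means of $\|u_x\|_{L^\infty}\le C$ from \eqref{b152-2} and $\int_0^{T}\|\theta_t(s)\|^{2}ds\le C$ from \eqref{b152}; this yields a Gronwall inequality $\frac{d}{dt}\bigl\|\sqrt{e_\theta}\,w_-(t)\bigr\|^{2}\le C\bigl(1+\|\theta_t(t)\|^{2}\bigr)\bigl\|\sqrt{e_\theta}\,w_-(t)\bigr\|^{2}$, which forces $w_-\equiv0$. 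Equivalently, since $\theta(t,\cdot)\in H^{2}\hookrightarrow C^{1}$ for a.e.\ $t$, one may instead evaluate the pointwise inequality at a point where $\theta(t,\cdot)$ attains its infimum (attained whenever that infimum is $<1$, again by $\theta\to1$), where $\theta_x=0$ and $\theta_{xx}\ge0$, to obtain directly the Riccati differential inequality $\frac{d}{dt}\bigl(\inf_x\theta(t,\cdot)\bigr)\ge -C\bigl(\inf_x\theta(t,\cdot)\bigr)^{2}$ and integrate it. I expect all other steps to be routine.
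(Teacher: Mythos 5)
Your proposal is correct but proceeds along a genuinely different route from the paper. The paper proves Lemma 2.14 by passing to the reciprocal $h=1/\theta$: in these variables the completed square produces the \emph{linear} parabolic inequality \eqref{b183}, $e_\theta h_t-\bigl(\kappa(v,\theta)h_x/v\bigr)_x\leq vp_\theta^2/(4\mu)$, whose forcing is then absorbed by subtracting $C\int_s^t\max_x\{1+\theta^3\}\,d\tau$ so that $g=h-C\int_s^t\max_x\{1+\theta^3\}\,d\tau$ is a genuine supersolution of $g_t-e_\theta^{-1}(\kappa g_x/v)_x\leq 0$; the classical maximum principle then gives $g(t,\cdot)\leq 1/\min_x\theta(s,\cdot)$, and the uniform bound $\theta\leq\overline\Theta$ from Lemma 2.10 bounds the subtracted integral by $C(t-s)$, yielding \eqref{b180} on inverting. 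You instead stay with $\theta$, complete the square the other way to obtain the Riccati lower bound $e_\theta\theta_t-(\kappa\theta_x/v)_x\geq -C\theta^2$, build the explicit spatially homogeneous Riccati subsolution $\underline\theta(t)=m/(1+C'm(t-s))$, and run a comparison argument for $w=\theta-\underline\theta$. The two proofs are close cousins: the reciprocal substitution linearizes the Riccati structure at the outset so no comparison function or Gronwall argument is needed, whereas your version handles the quadratic coupling by hand, and you address explicitly the far-field issue ($\theta\to1$, $m\leq1$) needed to run the maximum principle on $\mathbb{R}$, which the paper leaves implicit. One small caveat in your energy variant: the claimed Gronwall form $\frac{d}{dt}\|\sqrt{e_\theta}w_-\|^2\leq C(1+\|\theta_t\|^2)\|\sqrt{e_\theta}w_-\|^2$ does not follow from $\int|\partial_t e_\theta|w_-^2\,dx\leq C\|\theta_t\|\,\|w_-\|$ alone (that estimate lacks the required $\|w_-\|^2$ factor); one needs the interpolation $\|w_-\|_{L^\infty}^2\leq 2\|w_-\|\,\|w_{-x}\|$ to write $\int|\theta_t|w_-^2\,dx\leq\epsilon\|w_{-x}\|^2+C(\epsilon)\|\theta_t\|^{4/3}\|w_-\|^2$ and absorb the derivative part into the diffusion term, after which Gronwall closes. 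Your alternative pointwise ending (the Riccati differential inequality for $\inf_x\theta(t,\cdot)$) avoids this entirely and is the closer analogue of the paper's $h$-argument.
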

\begin{proof}
If we set $h(t, x)=\frac{1}{\theta(t, x)}$, then one can deduce from $(\ref{a1})_{3}$ that
\begin{eqnarray}\label{b181}
 &&e_{\theta}(v,\theta)h_{t}-\left(\frac{\kappa\left(v,\theta\right)h_{x}}{v}\right)_{x}\\
 &=&\frac{vp_{\theta}^{2}(v,\theta)}{4\mu} -\left[\frac{2\kappa\left(v,\theta\right)h_{x}^{2}}{vh}+
  \frac{\mu h^{2}}{v}\left(u_{x}-\frac{vp_{\theta}(v,\theta)}{2\mu h}\right)^{2}+\lambda h^{2}\phi z\right],\nonumber
\end{eqnarray}
where
\begin{equation}\label{b182}
 e_{\theta}(v,\theta)=C_{v}+4av\theta^{3},\quad p_{\theta}(v,\theta)=\frac{R}{v}+\frac{4}{3}a\theta^{3}.
\end{equation}

It is easy to see that
\begin{equation}\label{b183}
 e_{\theta}(v,\theta)h_{t}-\left(\frac{\kappa\left(v,\theta\right)h_{x}}{v}\right)_{x}\leq\frac{vp_{\theta}^{2}(v,\theta)}{4\mu}.
\end{equation}

Using the identities (\ref{b182}), one has from the estimate \eqref{b43} obtained in Lemma 2.6 that
\begin{eqnarray}\label{b184}
h_{t}&\leq& \frac{1}{ e_{\theta}(v,\theta)}\left(\frac{\kappa\left(v,\theta\right)h_{x}}{v}\right)_{x}+C\left(1+\theta^{3}\right)\\
&\leq& \frac{1}{ e_{\theta}(v,\theta)}\left(\frac{\kappa\left(v,\theta\right)h_{x}}{v}\right)_{x} +C\frac{d}{dt}\int_{s}^{t}\max\limits_{x\in\mathbb{R}}\left\{1+\theta^{3}(\tau,x)\right\}d\tau.\nonumber
\end{eqnarray}

If we set
\begin{eqnarray*}
g(t, x)&=&h(t, x)-C\int_{s}^{t}\max\limits_{x\in\mathbb{R}}\left\{1+\theta^{3}(\tau,x)\right\}d\tau\\
&=&\frac{1}{\theta(t,x)}-C\int_{s}^{t}\max\limits_{x\in\mathbb{R}}\left\{1+\theta^{3}(\tau,x)\right\}d\tau,
  \end{eqnarray*}
then we can conclude from (\ref{b184}) that
  \begin{eqnarray*}
g_{t}-\frac{1}{ e_{\theta}(v,\theta)}\left(\frac{\kappa\left(v,\theta\right)g_{x}}{v}\right)_{x}&&\leq 0,\quad 0\leq s<t\leq T,\ x\in\mathbb{R},\\
g(t,x)|_{t=s}&&=h(s,x)=\frac{1}{\theta(s,x)},\quad x\in\mathbb{R}.
  \end{eqnarray*}

With the help of the maximum principle of parabolic equation, we can deduce that
 \begin{eqnarray*}
g(t,x)\leq \max\limits_{x\in\mathbb{R}}\{h(s,x)\}=\frac{1}{\min\limits_{x\in\mathbb{R}}\{\theta(s,x)\}}
\end{eqnarray*}
holds for all $(t,x)\in [s,T]\times\mathbb{R}$.

Hence, we have for all $(t,x)\in [s,T]\times\mathbb{R}$ that
\begin{equation}\label{b189}
h(t,x)\leq \frac{1}{\min\limits_{x\in\mathbb{R}}\{\theta(s,x)\}}+C\int_{s}^{t}\max\limits_{x\in\mathbb{R}} \left\{1+\theta^{3}(\tau,x)\right\}d\tau.
\end{equation}

On the other hand, the estimate \eqref{b151} obtained in Lemma 2.10 tells us that
\begin{equation}\label{b193}
\int_{s}^{t}\max\limits_{x\in\mathbb{R}}\left\{1+\theta^3\left(\tau,x\right)\right\}d\tau\leq C(t-s)
\end{equation}
holds for all $0\leq s\leq t\leq T$.

Having obtained (\ref{b189}) and (\ref{b193}), the estimate \eqref{b180} follows immediately from the definition of $g(t,x)$. This completes the proof of Lemma 2.14.
\end{proof}

\section{Proof of Theorem \ref{Th1.1}}
With Lemma 2.1-Lemma 2.14 in hand, we can turn to prove our main result Theorem \ref{Th1.1}. To this end, we collect the a priori estimates obtained in Lemma 2.1-Lemma 2.14 as follows: For some positive constants $0<M_1\leq M_2, 0<N_1\leq N_2$, suppose that $(v(t,x), u(t,x),\theta(t,x),z(t,x))\in X(0,T;M_1,M_2;N_1,N_2)$ is a solution to the Cauchy problem \eqref{a1}-\eqref{a4}, \eqref{a5}, \eqref{a6} defined on the strip $\Pi_T=[0,T]\times\mathbb{R}$ for some $T>0$, then there exist some positive constants $\underline{V}$, $\overline{V}$, $\overline{\Theta}$, $C_1$ and $C_2$, which depend only on the initial data $(v_0(x), u_0(x), \theta_0(x), z_0(x))$ but are independent of $M_1, M_2, N_1$ and $N_2$, such that the following estimates
\begin{eqnarray}
 \theta\left(t,x\right) &\leq&\overline{\Theta},\label{c-1}\\
 \underline{V}\leq v(t,x)&\leq&\overline{V},\label{c-2}
\end{eqnarray}
\begin{equation}\label{c-3}
 \left\|\left(v-1, u, \theta-1, z\right)(t)\right\|^{2}_{H^1(\mathbb{R})}+\int_{0}^{t}\left(\left\|\sqrt{\theta(s)}v_{x}(s)\right\|^{2}+\left\|\left(u_{x}, \theta_{x}, z_{x}\right)(s)\right\|^{2}_{H^{1}\left(\mathbb{R}\right)}\right)ds\leq C_1
\end{equation}
and
\begin{equation}\label{c-4}
  \theta\left(t,x\right)\geq \frac{C_2\min\limits_{x\in\mathbb{R}}\{\theta(s,x)\}}{1+(t-s)\min\limits_{x\in\mathbb{R}}\{\theta(s,x)\}}
\end{equation}
hold for all $0\leq s\leq t\leq T$ and $x\in\mathbb{R}$.

Now we turn to prove Theorem \ref{Th1.1}. Before doing so, we first point out that the analysis here is different from that of \cite{Li-Liang-ARMA-2016} for one-dimensional viscous heat-conducting ideal polytropic gas motion. In that case since the corresponding global solvability result is well-established in \cite{Antontsev-Kazhikov-Monakhov-1990, Kazhikhov-Shelukhin-JAMM-1977}, the a priori estimates similar to \eqref{c-1}-\eqref{c-4} indeed hold for all $t\in\mathbb{R}^+$ and consequently the stability analysis together with the local lower bound estimate similar to that of \eqref{c-4} can lead to a uniform positive lower bound estimate on $\theta(t,x)$ from which the time-asymptotical behavior of the global solution constructed in  \cite{Antontsev-Kazhikov-Monakhov-1990, Kazhikhov-Shelukhin-JAMM-1977} can be obtained. But for the problem considered in this paper, no global solvability result is available up to now and we had to deal with the following problems:
\begin{itemize}
\item how to extend the local solutions $(v(t,x), u(t,x),\theta(t,x),z(t,x))$ to the Cauchy problem \eqref{a1}-\eqref{a4}, \eqref{a5}, \eqref{a6} step by step to a global one by combining the a priori estimates \eqref{c-1}-\eqref{c-4} with the well-established local solvability result?
\item how to deduce the uniform lower positive bound for $\theta(t,x)$?
\end{itemize}
The key point of our analysis is to introduce a well-designed continuation argument which are motivated by the work of \cite{Wang-Zhao-M3AS-2016} for one-dimensional viscous heat-conducting ideal polytropic gas motion with temperature and density dependent viscosity and can be divided into the following three steps:\vspace{3mm}

\noindent{\bf Step 1:}\quad Let $T_1=24C_1^2$ with $C_1$ being the constant in \eqref{c-3},  by employing the standard continuation argument, one can easily deduce by combining the well-known local solvability result with the a priori estimates \eqref{c-1}-\eqref{c-4} that the local solution $(v(t,x), u(t,x),\theta(t,x),z(t,x))$ to the Cauchy problem \eqref{a1}-\eqref{a4}, \eqref{a5}, \eqref{a6} can be extended to the time interval $[0,2T_1]$. Moreover, the a priori estimates \eqref{c-1}-\eqref{c-4} tell us that the estimates \eqref{c-1}-\eqref{c-3} hold for all $(t,x)\in[0,2T_1]\times\mathbb{R}$ and the estimate \eqref{c-4} with $s=0, T=2T_1$ tells us that
\begin{equation}\label{c-5}
\theta\left(t,x\right)\geq \frac{C_2\min\limits_{x\in\mathbb{R}}\{\theta_0(x)\}}{1+2T_1\min\limits_{x\in\mathbb{R}}\{\theta_0(x)\}}= \frac{n_0C_2}{1+2T_1n_0}
\end{equation}
is true for all $(t,x)\in[0,2T_1]\times\mathbb{R}$.

The fact that the estimate \eqref{c-3} holds for $t\in[0,2T_1]$ together with the fact $\theta_x(t,x)\in C([0,2T_1],$ $L^2(\mathbb{R}))$ imply that there exists a $t_1\in[0,T_1]$ such that
\begin{equation}\label{c-6}
\left\|\theta_x(T_1+t_1)\right\|\leq \frac{1}{4\sqrt{C_1}},
\end{equation}
since if
$$
\left\|\theta_x(t)\right\|>\frac{1}{4\sqrt{C_1}}
$$
holds for all $t\in[T_1,2T_1]$, then we have
$$
\int_0^{2T_1}\left\|\theta_x(s)\right\|^2ds\geq \int^{2T_1}_{T_1}\left\|\theta_x(s)\right\|^2ds>\frac{T_1}{16C_1}=\frac 32C_1>C_1,
$$
which contradicts the fact that the estimate \eqref{c-3} holds for all $t\in[0,2T_2]$.

\eqref{c-6}, Sobolev's inequality and the estimate \eqref{c-3} with $T=2T_1$ imply
\begin{eqnarray*}
\left\|\theta(T_1+t_1)-1\right\|_{L^\infty(\mathbb{R})}&\leq& \left\|\theta(T_1+t_1)-1\right\|^{\frac 12}\left\|\theta_x(T_1+t_1)\right\|^{\frac 12}\\
&\leq& C_1^{\frac 14}\sqrt{\frac{1}{4\sqrt{C_1}}}\\
&=&\frac 12
\end{eqnarray*}
and consequently
\begin{equation}\label{c-7}
\frac 12\leq \theta(T_1+t_1,x)\leq \frac 32
\end{equation}
holds for all $x\in\mathbb{R}$.

\eqref{c-6} together with \eqref{c-7} yield
\begin{equation}\label{c-8}
\theta(t,x)\geq \underline{\Theta}_1\equiv\min\left\{\frac 12, \frac{n_0C_2}{1+2T_1n_0}\right\}
\end{equation}
holds for all $(t,x)\in[0,T_1+t_1]\times\mathbb{R}$.\vspace{3mm}

\noindent{\bf Step 2:}\quad Now take $(v(T_1+t_1,x), u(T_1+t_1,x), \theta(T_1+t_1,x), z(T_1+t_1,x))$ as initial data, then  by employing the standard continuation argument again, one can easily deduce by combining the well-known local solvability result with the a priori estimates \eqref{c-1}-\eqref{c-4} that the solution $(v(t,x), u(t,x),$ $\theta(t,x),z(t,x))$ to the Cauchy problem \eqref{a1}-\eqref{a4}, \eqref{a5}, \eqref{a6} defined on the strip $\Pi_{T_1+t_1}=[0,T_1+t_1]\times\mathbb{R}$ can be further extended to the time interval $[0,3T_1+t_1]$ and $(v(t,x), u(t,x),$ $\theta(t,x),z(t,x))$ satisfies the a priori estimates \eqref{c-1}-\eqref{c-3} with $T=3T_1+t_1$ and the estimate \eqref{c-4} with $T=3T_1+t_1, s=T_1+t_1$, that is
\begin{eqnarray}\label{c-9}
\theta(t,x)&\geq& \frac{C_2\min\limits_{x\in\mathbb{R}}\{\theta(T_1+t_1,x)\}}{1+2T_1\min\limits_{x\in\mathbb{R}}\{\theta(T_1+t_1,x)\}}\nonumber\\
&\geq& \frac{C_2}{2(1+T_1)}
\end{eqnarray}
holds for all $(t,x)\in [T_1+t_1,3T_1+t_1]\times\mathbb{R}$. Here we have used the fact that $\min\limits_{x\in\mathbb{R}}\{\theta(T_1+t_1,x)\}=\frac 12$ which follows from \eqref{c-7}.

Similarly, since \eqref{c-3} holds for $T=3T_1+t_1$, one can conclude that there exists a $t_2\in[0,T_1]$ such that
\begin{equation}\label{c-10}
\left\|\theta_x(2T_1+t_1+t_2)\right\|\leq \frac{1}{4\sqrt{C_1}}
\end{equation}
and consequently
\begin{equation}\label{c-11}
\frac 12\leq \theta(2T_1+t_1+t_2,x)\leq \frac 32
\end{equation}
holds for all $x\in\mathbb{R}$.

\eqref{c-9} together with \eqref{c-11} tell us that
\begin{equation}\label{c-12}
\theta(t,x)\geq \underline{\Theta}_2\equiv\min\left\{\frac 12,  \frac{C_2}{2(1+T_1)}\right\}
\end{equation}
holds for all $(t,x)\in[T_1+t_1,2T_1+t_1+t_2]\times\mathbb{R}$.\vspace{3mm}

\noindent{\bf Step 3:}\quad Now take $(v(2T_1+t_1+t_2,x), u(2T_1+t_1+t_2,x), \theta(2T_1+t_1+t_2,x), z(2T_1+t_1+t_2,x))$ as initial data, then repeating the above argument, one can extend the solution $(v(t,x), u(t,x),$ $\theta(t,x),z(t,x))$ to the Cauchy problem \eqref{a1}-\eqref{a4}, \eqref{a5}, \eqref{a6} defined on the strip $\Pi_{2T_1+t_1+t_2}=[0,2T_1+t_1+t_2]\times\mathbb{R}$ once more to the time interval $[0,4T_1+t_1+t_2]$ and $(v(t,x), u(t,x),$ $\theta(t,x),z(t,x))$ satisfies the a priori estimates \eqref{c-1}-\eqref{c-3} with $T=4T_1+t_1+t_2$ and the estimate \eqref{c-4} with $T=4T_1+t_1+t_2, s=2T_1+t_1+t_2$ which implies that
\begin{eqnarray}\label{c-13}
\theta(t,x)&\geq& \frac{C_2\min\limits_{x\in\mathbb{R}}\{\theta(2T_1+t_1+t_2,x)\}}{1+2T_1\min\limits_{x\in\mathbb{R}}\{\theta(2T_1+t_1+t_2,x)\}}\nonumber\\
&\geq& \frac{C_2}{2(1+T_1)}
\end{eqnarray}
holds for all $(t,x)\in [2T_1+t_1+t_2,4T_1+t_1+t_2]\times\mathbb{R}$. Here we have used the fact that $\min\limits_{x\in\mathbb{R}}\{\theta(2T_1+t_1+t_2,x)\}=\frac 12$ which follows from \eqref{c-11}.

Moreover, by employing the same argument, one can find $t_3\in[0,T_1]$ such that
\begin{equation}\label{c-14}
\frac 12\leq \theta(3T_1+t_1+t_2+t_3,x)\leq \frac 32
\end{equation}
holds for all $x\in\mathbb{R}$ and the estimate
\begin{equation}\label{c-15}
\theta(t,x)\geq \underline{\Theta}_2\equiv\min\left\{\frac 12,  \frac{C_2}{2(1+T_1)}\right\}
\end{equation}
holds for all $(t,x)\in[2T_1+t_1+t_2,3T_1+t_1+t_2+t_3]\times\mathbb{R}$. The most important fact here is that $\underline{\Theta}_2$ is a lower bound of $\theta(t,x)$ for both the time interval $[T_1+t_1,2T_1+t_1+t_2]$ and the time interval $[2T_1+t_1+t_2,3T_1+t_1+t_2+t_3]$.

Repeating the above procedure, we can thus extend the solution $(v(t,x), u(t,x),$ $\theta(t,x),z(t,x))$ to the Cauchy problem \eqref{a1}-\eqref{a4}, \eqref{a5}, \eqref{a6} step by step to a global one. Direct by-products of the above procedure are the following uniform positive lower bound on $\theta(t,x)$
\begin{equation}\label{c-16}
\theta(t,x)\geq \underline{\Theta}\equiv\min\left\{\underline{\Theta}_1,\underline{\Theta}_2\right\},\quad \forall(t,x)\in\mathbb{R}^+\times\mathbb{R}
\end{equation}
and the estimate \eqref{c-1}-\eqref{c-3} hold for all $(t,x)\in\mathbb{R}^+\times\mathbb{R}$.

The estimate \eqref{c-3} together with the estimate \eqref{c-16} imply that
\begin{equation}\label{c-17}
 \left\|\left(v-1, u, \theta-1, z\right)(t)\right\|^{2}_{H^1(\mathbb{R})}+\int_{0}^{t}\left(\left\|v_{x}(s)\right\|^{2}+\left\|\left(u_{x}, \theta_{x}, z_{x}\right)(s)\right\|^{2}_{H^{1}\left(\mathbb{R}\right)}\right)ds\leq C_3
\end{equation}
holds for any $t\in\mathbb{R}^+$ and some positive constant $C_3$ which depends only on the initial data $v_0(x), u_0(x),$ $\theta_0(x), z_0(x))$.

Having obtained \eqref{c-17}, the estimate \eqref{a13} can be obtained by the standard method, we thus omit the details for brevity. Thus completes the proof of Theorem \ref{Th1.1}.

\begin{center}
{\bf Acknowledgement}
\end{center}
This work was supported by a grant from the National Natural Science Foundation of China under
contract 11671309 and the Fundamental Research Funds for the Central Universities.

\end{document}